\newcommand{\n}{\mathscr{N}}
\newcommand{\s}{\mathscr{S}}
\newcommand{\A}{\mathscr{A}}
\newcommand{\E}{\mathscr{E}}
\newcommand{\M}{\mathscr{M}}
\newcommand{\C}{\mathscr{C}}
\newcommand{\R}{\mathscr{R}}
\renewcommand{\P}{\mathscr{P}}
\renewcommand{\O}{\mathscr{O}}
\renewcommand{\L}{\mathscr{L}}
\newcommand{\comma}{,}
\newcommand{\set}{\mathscr{S}\text{et}}
\newcommand{\spaces}{\mathscr{S}\text{paces}}
\newcommand{\CSS}{\mathscr{C}\mathscr{S}\mathscr{S}}
\newcommand{\comsq}[8]{
  \tikzcdset{row sep/normal=0.5in}
  \tikzcdset{column sep/normal=0.5in}
  \begin{tikzcd}
    #1 \arrow[r, "#5"] \arrow[d, "#6"']
    \pgfmatrixnextcell #2 \arrow[d, "#7"] \\
    #3 \arrow[r, "#8"]
    \pgfmatrixnextcell #4
  \end{tikzcd}
}
\newcommand{\pbsq}[8]{
  \tikzcdset{row sep/normal=0.5in}
  \tikzcdset{column sep/normal=0.5in}
  \begin{tikzcd}
    #1 \arrow[r, "#5"] \arrow[d, "#6"'] \arrow[dr, phantom, "\ulcorner", very near start]
    \pgfmatrixnextcell #2 \arrow[d, "#7"] \\
    #3 \arrow[r, "#8"']
    \pgfmatrixnextcell #4
  \end{tikzcd}
}
\newcommand{\simpset}[7]{
 \begin{tikzcd}[row sep=0.5in, column sep=0.5in]
   #1 \arrow[r, shorten >=1ex,shorten <=1ex]
   \pgfmatrixnextcell #2 
   \arrow[l, shift left=1.2, "#5"] \arrow[l, shift right=1.2, "#4"'] 
   \arrow[r, shift right, shorten >=1ex,shorten <=1ex ] \arrow[r, shift left, shorten >=1ex,shorten <=1ex] 
   \pgfmatrixnextcell #3 
   \arrow[l] \arrow[l, shift left=2, "#7"] \arrow[l, shift right=2, "#6 "'] 
   \arrow[r, shorten >=1ex,shorten <=1ex] \arrow[r, shift left=2, shorten >=1ex,shorten <=1ex] \arrow[r, shift right=2, shorten >=1ex,shorten <=1ex]
   \pgfmatrixnextcell \cdots 
   \arrow[l, shift right=1] \arrow[l, shift left=1] \arrow[l, shift right=3] \arrow[l, shift left=3] 
 \end{tikzcd}
}
\newtheorem{theone}{Theorem}[section]
\newtheorem{lemone}[theone]{Lemma}
\newtheorem{propone}[theone]{Proposition}
\newtheorem{corone}[theone]{Corollary}
\theoremstyle{definition}
\newtheorem{defone}[theone]{Definition}
\newtheorem{exone}[theone]{Example}
\newtheorem{notone}[theone]{Notation}
\theoremstyle{remark}
\newtheorem{remone}[theone]{Remark}
\def\@seccntformat#1{%
  \expandafter\ifx\csname c@#1\endcsname\c@section\else
  \csname the#1\endcsname\quad
  \fi}
\title{Complete Segal Objects}
\author{Nima Rasekh}
\date{May 2018}
\begin{document}

\begin{abstract}
We define complete Segal objects, which play the role of internal higher category objects. 
Then we study them using representable Cartesian fibrations, in particular defining adjunctions and 
limits of complete Segal objects. Finally we use Segal objects to define univalence in 
a locally Cartesian closed category that is not presentable and generalize some results 
from \cite{GK17} to the non-presentable setting.
\end{abstract}

\maketitle
\addtocontents{toc}{\protect\setcounter{tocdepth}{1}}

\tableofcontents

 \noindent

\section{Introduction} \label{Sec Introduction}

 \subsection{Motivation}\label{Subsec Motivation}
 One way to expand the tools of category theory is by introducing category objects. 
 In a given category $\C$ which has the necessary limits, we define a category object as two objects $\mathscr{O}$ 
 (the object of objects) and $\mathscr{M}$
 (the object of morphisms) with maps $(s,t): \mathscr{M} \to \mathscr{O} \times \mathscr{O}$, $id: \mathscr{O} \to \mathscr{M}$
 and $m: \mathscr{M} \times_{\mathscr{O}} \mathscr{M} \to \mathscr{M}$ that satisfy the necessary relations.
 Using this approach, we can define Lie groups as a certain category object in the category of manifolds, or
 commutative Hopf algebroids as a category object in commutative rings.
 \par 
 The goal here is to develop a similar theory in the realm of $(\infty,1)$-categories.
 Concretely, we generalize the concept of a complete Segal space that is one model of an $(\infty,1)$-category
 to internal higher category object, which we call a {\it complete Segal object}.
 \par 
 Notice this generalization is different from the one that can be found in \cite{Lu09}. 
 While both are a generalization of a complete Segal space, the work of Lurie focuses on generalizing it in a way that gives us a
 definition of an $(\infty,2)$-category, whereas here the goal is to define internal $(\infty,1)$-categories.
 In particular, a complete Segal space object in $\CSS$ is a model for an $(\infty,2)$-category, whereas
 a complete Segal object in $\CSS$ is what we would call a ``double higher category".
 See Section \ref{Sec Examples of Complete Segal Objects} for more details.
 
 \subsection{Outline}\label{Subsec Outline}
 In the first section we review some notation for simplicial spaces and some basics of complete Segal spaces.
 \par 
 In the second section we define Segal objects and complete Segal objects and develop the basic category theory.
 In particular, we discuss objects, morphisms, composition and equivalences in Segal objects.
 \par 
 In the third section we review the concept of representable Cartesian fibrations, which play the role of 
 presheaves that are represented by complete Segal objects.
 In particular, we show there is a Yoneda lemma for complete Segal objects.
 The material in this section relies on \cite{Ra17a} and \cite{Ra17b}.
 \par 
 In the fourth section we use representable Cartesian fibrations to define adjunctions and limits for complete Segal objects.
 In particular, we also prove the ``Fundamental Theorem of Complete Segal Objects".
 \par 
 In the fifth section we give some examples of complete Segal objects in various higher categories.
 In particular, talk about complete Segal objects in classical categories, spaces, right fibrations and stable higher categories.
 \par 
 In the last section we use the definition of complete Segal objects to define univalent maps in a 
 non-presentable Cartesian closed higher category, generalizing results in \cite{GK17}.
 
 \subsection{Background}
 The first two sections only need a basic understanding of complete Segal spaces.
 For the remainder we need the language of representable Cartesian fibrations which is discussed at length in \cite{Ra17b}.
 However, the main results we need are reviewed in Subsection \ref{Subsec Cartesian Fibrations} 
 and Subsection \ref{Subsec Representable Reedy Right Fibrations}. Thus as long as the reader is willing to accept the necessary results
 from \cite{Ra17b} the material here is self-contained.
 
%
 
 \subsection{Acknowledgements} \label{Subsec Acknowledgements}
 My main gratitude goes to my advisor Charles Rezk who has guided me through every single step of this work.
 I also want to thank Mike Shulman for many helpful conversations on how to relate Segal objects to univalence
 that led to the material in Section \ref{Sec Complete Segal Objects and Univalence}.
 
 
\section{Basics \& Conventions on CSS} \label{Sec Basics Conventions on CSS}
 In this section we review some of the notation and definitions we use.
 Throughout this note we use the theory of complete Segal spaces.
 The basic reference to the theory of complete Segal spaces is the original paper by Charles Rezk \cite{Re01}.
 For a discussion on adjunctions and colimits see \cite{Ra18a}. 
 Here we will only cover the basic notations. 

 Let $\s$ denote the category of simplicial sets, which we henceforth call spaces. 
 Moreover, $s\s$ is the category of bisimplicial sets, which we call simplicial spaces.
 \par
  We define $F(m)$ and $\Delta[m]$ as
  $$F(m)_{nl} = Hom_{\Delta}([n],[m]),$$
  $$\Delta[m]_{nl} = Hom_{\Delta}([l],[m]).$$
  The category $s\s$ is generated by $F(n) \times \Delta[l]$.

 We can localize the Reedy model structure on simplicial spaces to get a model for $(\infty,1)$-categories, called complete Segal spaces (CSS).
 This happens in two steps.

 \begin{defone} \label{Def Segal Spaces}
 \cite[Page 11]{Re01}
  A Reedy fibrant simplicial space $X$ is called Segal space if the map
  $$ X_n \xrightarrow{\ \ \simeq \ \ } X_1 \times_{X_0} ... \times_{X_0} X_1 $$
  is an equivalence for $n \geq 2$.
 \end{defone}
 
 \begin{theone} \label{The Segal Space Model Structure}
  \cite[Theorem 7.1]{Re01}
  There is a simplicial closed model category structure on the category $s\s$ of simplicial spaces, 
  called the Segal space model category structure, with the following properties.
  \begin{enumerate}
   \item The cofibrations are precisely the monomorphisms.
   \item The fibrant objects are precisely the Segal spaces.
   \item The weak equivalences are precisely the maps $f$ such that $Map_{s\s}(f, W)$ is
   a weak equivalence of spaces for every Segal space $W$.
   \item A Reedy weak equivalence between any two objects is a weak equivalence in
   the Segal space model category structure, and if both objects are themselves
   Segal spaces then the converse holds.
   \item The model category structure is compatible with the Cartesian closed structure
  \end{enumerate}
 \end{theone}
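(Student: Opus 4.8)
The plan is to obtain this model structure as a left Bousfield localization of the Reedy model structure on $s\s$. Recall that the Reedy model structure on $s\s$ is left proper, cellular, simplicial, and compatible with the (levelwise) Cartesian closed structure, and that its cofibrations are exactly the monomorphisms. For $n \geq 2$ let $G(n)$ denote the \emph{spine} of $F(n)$, i.e. the iterated pushout $F(1) \cup_{F(0)} F(1) \cup_{F(0)} \cdots \cup_{F(0)} F(1)$ of $n$ copies of $F(1)$, included into $F(n)$ along the inclusions $[1]\to[n]$ hitting consecutive vertices. Set $S = \{\, G(n) \hookrightarrow F(n) : n \geq 2 \,\}$. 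Since the Reedy structure is a left proper cellular simplicial model category, the standard existence theorem for left Bousfield localizations produces a left proper simplicial model structure $L_S s\s$ whose cofibrations are the monomorphisms, whose fibrant objects are the Reedy fibrant $S$-local objects, and whose weak equivalences are the $S$-local equivalences. My claim is that this is the Segal space model structure; it remains to identify the pieces.

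Property (1) is immediate, as localization does not change the cofibrations. For (2) and (3), I would compute that for a Reedy fibrant $X$ the restriction map $Map_{s\s}(F(n), X) \to Map_{s\s}(G(n), X)$ is, up to equivalence, the canonical map $X_n \to X_1 \times_{X_0} \cdots \times_{X_0} X_1$; Reedy fibrancy of $X$ is exactly what lets us replace the strict fibre product in $s\s$ by the homotopy fibre product of spaces, so that $Map_{s\s}(G(n), X) \simeq X_1 \times^h_{X_0} \cdots \times^h_{X_0} X_1$. Hence a Reedy fibrant $X$ is $S$-local precisely when all these maps are equivalences, i.e. precisely when $X$ is a Segal space, which is (2). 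Property (3) is then just the definition of $S$-local equivalence, since the $S$-local objects are exactly the Segal spaces.

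For (4), one direction is formal: every Reedy weak equivalence is an $S$-local equivalence because $Map_{s\s}(-, W)$ sends Reedy weak equivalences to equivalences of spaces for every Reedy fibrant $W$, in particular for every Segal space $W$. For the converse, if $f\colon X \to Y$ is an $S$-local equivalence between Segal spaces, then $f$ is a weak equivalence between fibrant objects of $L_S s\s$, and by the general theory of left Bousfield localizations such a map is a weak equivalence in the original (Reedy) structure. Finally, for (5) I would invoke the compatibility of the Reedy structure with $\times$ together with the observation that the class of $S$-local equivalences is stable under $- \times K$ for every simplicial space $K$; by the exponential adjunction this reduces to checking that $W^{\Delta[l]}$ is again a Segal space whenever $W$ is, which holds because $(-)^{\Delta[l]}$ preserves Reedy fibrant objects and commutes with the homotopy fibre products occurring in the Segal condition.

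I expect the main obstacle to be the identification of the $S$-local objects with the Segal spaces, i.e. the mapping-space computation $Map_{s\s}(G(n), X) \simeq X_1 \times^h_{X_0} \cdots \times^h_{X_0} X_1$: this is where Reedy fibrancy must be used with care in order to pass between strict limits in $s\s$ and homotopy limits of spaces. The Cartesian-closedness statement (5) is the second delicate point, since one must be careful about exactly which maps need to be tested for the localization to remain monoidal; reducing it to the preservation of the Segal condition under $(-)^{\Delta[l]}$ is the cleanest route.
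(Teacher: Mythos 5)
The paper does not prove this theorem; it is quoted verbatim from \cite[Theorem 7.1]{Re01}, so the only available comparison is with Rezk's original argument, and your strategy --- left Bousfield localization of the Reedy model structure at the spine inclusions $G(n)\hookrightarrow F(n)$, followed by the identification of the local objects with the Segal spaces via the equivalence $Map_{s\s}(G(n),X)\simeq X_1\times^h_{X_0}\cdots\times^h_{X_0}X_1$ for Reedy fibrant $X$ --- is exactly the route taken there. Items (1)--(4) are handled correctly: (1) and (3) are formal consequences of the localization machinery, (2) is your mapping-space computation, and both halves of (4) follow from the standard facts that Reedy equivalences are local equivalences and that a local equivalence between fibrant-and-local objects is an equivalence in the underlying model structure.

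There is, however, a genuine gap in your treatment of (5). To show that $-\times K$ preserves $S$-local equivalences for \emph{every} simplicial space $K$, the exponential adjunction requires $W^K$ to be a Segal space for every Segal space $W$ and every $K$; writing $K$ as a homotopy colimit of its cells $F(m)\times\Delta[l]$ reduces this to showing that $W^{F(m)\times\Delta[l]}=(W^{F(m)})^{\Delta[l]}$ is a Segal space, not merely $W^{\Delta[l]}$. The $\Delta[l]$ direction is the easy one, for precisely the reason you give: $(W^{\Delta[l]})_n=(W_n)^{\Delta[l]}$ and exponentiation by a simplicial set preserves the homotopy pullbacks of Kan complexes appearing in the Segal condition. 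The hard direction is $W^{F(m)}$ (equivalently, that $(G(n)\to F(n))\times F(m)$ is a Segal equivalence, or that the inclusion of the spine of $F(n)\times F(m)$ is anodyne for the localization). This is a nontrivial combinatorial statement --- it is the content of an entire section of \cite{Re01} --- and it is the actual substance of claim (5); your proposal omits it entirely by testing only against $\Delta[l]$.
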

 A Segal space already has many characteristics of a category, such as objects and morphisms 
 \cite[Section 5]{Re01}. 
 However, it is still not an actual higher category. For that we need {\it complete Segal spaces}.
 
 \begin{defone} \label{Def Complete Segal Spaces}
  A Segal space $W$ is called a complete Segal space if it satisfies one of the following equivalent conditions
  \begin{enumerate}
   \item The map $$Map(E(1),W) \xrightarrow{\ \ \simeq \ \ } Map(F(0),W) = W_0$$
   is a trivial Kan fibration. Here $E(1)$ is the free invertible arrow.
   \item The following is a homotopy pullback square of spaces.
   \begin{center}
    \begin{tikzcd}[row sep=0.5in, column sep=0.5in]
     W_0 \arrow[r] \arrow[d] & W_3 \arrow[d] \\
     W_1 \arrow[r] & W_1 ^s \times_{W_0}^s W_1 ^t\times_{W_0}^t W_1 
    \end{tikzcd}
   \end{center}
  \end{enumerate}
 \end{defone}

 
 \begin{theone} \label{The Complete Segal Space Model Structure}
  (\cite{Re01} Theorem 7.2)
  There is a simplicial closed model category structure on the category $s\s$ of simplicial spaces, 
  called the complete Segal space model category structure, with the following properties.
  \begin{enumerate}
   \item The cofibrations are precisely the monomorphisms.
   \item The fibrant objects are precisely the complete Segal spaces.
   \item The weak equivalences are precisely the maps $f$ such that $Map_{s\s}(f, W)$ is
   a weak equivalence of spaces for every complete Segal space $W$.
   \item A Reedy weak equivalence between any two objects is a weak equivalence in
   the complete Segal space model category structure, and if both objects are themselves
   Segal spaces then the converse holds.
   \item The model category structure is compatible with the Cartesian closed structure
  \end{enumerate}
 \end{theone}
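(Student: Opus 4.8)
The statement to prove is Theorem~\ref{The Complete Segal Space Model Structure}, namely the existence of the complete Segal space model structure on $s\s$ with its listed properties. Since this is the Bousfield localization of the Segal space model structure (Theorem~\ref{The Segal Space Model Structure}) at the completeness map $E(1) \to F(0)$, the natural plan is to realize it as a left Bousfield localization and then read off properties (1)--(5) from the general theory of such localizations together with the known good behavior of $s\s$.

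First I would recall that the Reedy model structure on $s\s$ is a proper, combinatorial (indeed cellular) simplicial model category in which the cofibrations are exactly the monomorphisms; this already gives property (1), since left Bousfield localization never changes the cofibrations. Next, I would form the left Bousfield localization of the Segal space model structure at the single map $i\colon E(1) \to F(0)$ (equivalently, at the set consisting of this map together with the maps defining Segal spaces, starting instead from Reedy). By the existence theorem for left Bousfield localizations of left proper combinatorial (or cellular) simplicial model categories, this localization exists, is again left proper, combinatorial, and simplicial, and its fibrant objects are precisely the Reedy fibrant objects $W$ that are local with respect to $i$ — that is, the Segal spaces $W$ for which $\operatorname{Map}(F(0),W) \to \operatorname{Map}(E(1),W)$ is a weak equivalence. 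By Definition~\ref{Def Complete Segal Spaces}, these are exactly the complete Segal spaces, giving property (2). Property (3) is the defining description of the weak equivalences in a localization at a set of maps, once one checks that being a local equivalence is detected by mapping into local (= complete Segal space) objects; this is automatic from the construction. Property (5), compatibility with the Cartesian closed structure, I would obtain by checking that the class of complete Segal spaces is closed under exponentiation $W \mapsto W^{K}$ for $K \in s\s$ (using that $E(1)$ and $F(0)$ behave well under the relevant pushout-products), so that the localization is an enriched/monoidal Bousfield localization; alternatively one cites that Theorem~\ref{The Segal Space Model Structure}(5) already gives this at the Segal level and the extra localization preserves it.

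For property (4), I would argue as follows: every Reedy weak equivalence is a weak equivalence in the Reedy structure, hence a fortiori a weak equivalence after any left Bousfield localization, so one direction is immediate. For the converse, suppose $X$ and $Y$ are (complete) Segal spaces and $f\colon X \to Y$ is a weak equivalence in the complete Segal space structure; then, since $X$ and $Y$ are fibrant in this structure, $f$ can be analyzed by a factorization into a trivial cofibration followed by a fibration, and a weak equivalence between fibrant objects in any model category is a homotopy equivalence detected on mapping spaces — concretely, $\operatorname{Map}(-, X) \to \operatorname{Map}(-, Y)$ is a weak equivalence on all objects, in particular on the generators $F(n)\times\Delta[l]$, which forces $f$ to be a levelwise (hence Reedy) weak equivalence. (One must be slightly careful to phrase "between Segal spaces" correctly; I would follow Rezk and note the statement in the theorem actually asserts this for Segal spaces, and the argument only uses that both objects are Reedy fibrant and local.)

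The main obstacle is not any single deep step but rather assembling the hypotheses needed to invoke the left Bousfield localization existence theorem — namely verifying (or citing) that the Segal space model structure on $s\s$ is left proper, combinatorial/cellular, and simplicial, and that it is the localization of the Reedy structure at a \emph{set} of maps — and then carefully identifying the fibrant objects of the localization with complete Segal spaces rather than merely with $i$-local objects in some weaker sense. Since all of this is carried out in detail in \cite{Re01}, I expect the "proof" to consist largely of pointing to Rezk's Theorem~7.2 and the general localization machinery, with the content being the identification of local objects via Definition~\ref{Def Complete Segal Spaces} and the mapping-space argument for property (4).
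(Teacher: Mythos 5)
The paper gives no proof of this statement; it is quoted verbatim from Rezk's Theorem~7.2, and your sketch --- left Bousfield localization of the Reedy/Segal space structure at $E(1)\to F(0)$, identification of the local objects with complete Segal spaces, and the formal properties of enriched localizations --- is exactly the strategy of the cited proof. One remark on part (4), where you rightly hedged: your argument for the converse uses that both objects are fibrant \emph{and local}, i.e.\ complete Segal spaces, and indeed the converse as literally stated for mere Segal spaces is false in the complete Segal space model structure (the map $E(1)\to F(0)$ is a weak equivalence between Segal spaces that is not a Reedy equivalence); Rezk's original statement requires completeness there, and the version printed in this paper appears to be a transcription slip, so your proof of the corrected statement is the right one.
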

 A complete Segal space is a model for higher category and as such comes with its own category theory \cite[Section 5, 6]{Re01}.


 \section{Complete Segal Objects} \label{Sec Complete Segal Objects}
 In order to be able to define complete Segal objects, we will proceed in three steps.
 First define simplicial objects, then we specialize to Segal objects and finally we define complete Segal objects.
 
 \begin{remone} \label{Rem Context Category}
  Everything we do henceforth happens internal to a given CSS. For consistency, we will denote it by $\C$ and 
  call it the {\it ambient category}. 
  We will always assume that $\C$ has all finite limits. 
  For certain constructions we also need $\C$ to be locally Cartesian closed, which we will point out.
%
 \end{remone}

 \subsection{Simplicial Objects} \label{Simplicial Objects} 
 In this subsection we will focus on the definition of simplicial objects in $\C$. 
 
 \begin{notone} \label{Not Delta Category as CSS}
  We define the complete Segal space of simplices as the nerve $\mathcal{N}(\Delta)$ of the category of simplices $\Delta$.
  For simplicity, we will often denote this complete Segal space with $\Delta$ as well.
 \end{notone}
 
 \begin{remone}
  Concretely, $\Delta$ is a discrete simplicial space such that $\Delta_n = Fun([n],\Delta)$.
  In particular, $\Delta_0 = \mathbb{N}$. 
 \end{remone}
 
 \begin{defone} \label{Def CSS of simplicial objects}
  We define the complete Segal space of \emph{simplicial objects of $\C$} by 
  $$s \C=  \C^{(\Delta^{op})}$$ 
 \end{defone}
 
 \begin{remone}
  Given the explanation above, we can once again depict a simplicial object in $X: \Delta^{op} \to \C$ as a diagram of the form
  \begin{center}
   \simpset{X_0}{X_1}{X_2}{d_0}{d_1}{d_0}{d_1}.
  \end{center}
 \end{remone}
 
 \begin{defone} \label{Def Equiv of simp obj}
  A map of simplicial objects $ F(1) \times \Delta^{op} \to \C$ is an {\it equivalence of simplicial objects}, if for any $n \in \mathbb{N}$,
  the restriction map $F(1) \to \C$ is an equivalence in $\C$. 
  In other words, the equivalences are just level-wise equivalences of simplicial objects.
 \end{defone}

 \begin{notone}
  There are $3$ important maps in the category $\Delta$ that we will need later on and thus deserve their own names.
  \begin{enumerate}
   \item There is a map $d_1: [0] \to [1]$ taking the unique point to $0 \in [1]$. This induces a simplicial map 
   $$d_1: X_1 \to X_0$$
   We will call this map the ``source map" and denote it by $s$. 
   \item Similarly, the map $d_0: [0] \to [1]$, taking the point to $1 \in [1]$, gives us a map 
   $$d_0:X_1 \to X_0$$
   which we refer to as the ``target map" and denote by $t$.
   \item There is a unique map $s_0: [1] \to [0]$, which gives a map:
   $$s_0: X_0 \to X_1$$
  \end{enumerate}
 \end{notone}


 \subsection{Segal Objects} \label{Subsec Segal Objects}
 In this subsection we specialize our simplicial objects so that it has some categorical properties.
 Namely, we impose the {\it Segal condition}.
 Using the Segal condition we can develop many interesting categorical notions internal to $\C$.

 

 In order to be able to define Segal objects we first need some preliminary definitions. 
 
 \begin{defone} \label{Def Gn}
  Let $g(n)$ be the category with objects maps $f: [1] \to [n]$ such that $f(1) - f(0) \leq 1$,
  excluding the two constant maps with value $0$ and $n$.
  Every such map is completely determined by its images.
  Thus, we can characterize a map as a string of two integers $ij$ such that $0 \leq i,j, \leq n$ and  $j-i \leq 1$.
  In order to simplify notation, we will depict the constant map with target $i$ with in one letter $i$, rather than $ii$.
  Thus the category $g(n)$ has $2n-1$ objects that can be summarized as the set 
  $$ \{ 01,1,12,2,...,n-1,n-1n \}$$
  
  For each object $ij$ in $g(n)$ there is exactly one non-trivial morphism $0^*:ij \to i$ (if $i > 0$) 
  and exactly one non-trivial morphism $1^*:ij \to j$ (if $j <n$). 
  In particular, there are no two composable non-trivial morphisms.
  
 \end{defone}
 
 \begin{defone} \label{Def Cone Gn}
  We define the {\it cone of g(n)}, $cg(n)$, as the category which has objects of $g(n)$, plus one additional object, which we denote by 
  $01...n$. Moreover, there is one unique map from $01...n$  to each object in $g(n)$, which, depending on the target, 
  we denote by $a_{ij}$ or $a_{i}$. Note the category $cg(n)$ does have composable arrows and so we have $0^*a_{ij} = a_i$ and 
  $1^*a_{ij} = a_j$.
 \end{defone}
 
 \begin{remone}
  We can depict the category as the following diagram 

   \begin{center}
   \begin{tikzcd}[row sep=0.5in, column sep=0.5in]
     & & & 01...n \arrow[ddlll, bend right=20] \arrow[ddl, bend right=20] \arrow[ddrr, bend left = 20] \\
    \\
    01 \arrow[dr] & & 12 \arrow[dl] & ... & & n-1n \arrow[dl] \\
    & 1 & & ... & n-1 & 
   \end{tikzcd}
  \end{center}
 \end{remone}

 \begin{remone} \label{Rem tildefXn}
  Let $X$ be a simplicial object in $\C$. There is a map of CSS $\n g(n) \to \C$ defined as follows:
  Each object of the form $i$ is mapped to $X_0$. Each object of the form $ij$ is mapped to $X_1$. Each map of the form $0^*$ is mapped to $s: X_1 \to X_0$
  and each map of the form $1^*$ is mapped to $t: X_1 \to X_0$. We will name this map $f_X(n): \n g(n) \to \C$.
  
  The map $f_X(n)$ can  be extended to a map $\tilde{f}_X(n): \n cg(n) \to \C$ as follows. We map the object $01...n$ to the object $X_n$ and each map 
  $a_{ij}$ is mapped to the map $\alpha^*_{i}: X_n \to X_1$. The functoriality then uniquely determines the target of $a_i$.
 \end{remone}
 
 \begin{remone} \label{Rem Segal Diagram}
  Using the depiction of the category above, we can depict this diagram in $\C$ as the following. 
  \begin{center}
   \begin{tikzcd}[row sep=0.5in, column sep=0.5in]
     & & & X_n \arrow[ddlll, bend right=20, "\alpha_0^*"] \arrow[ddl, bend right=20, "\alpha_1^*"] \arrow[ddrr, bend left = 20, "\alpha_{n-1}^*"] \\
    \\
    X_1 \arrow[dr, "t"] & & X_1 \arrow[dl, "s"] & ... & & X_1 \arrow[dl, "s"] \\
    & X_0 & & ... & X_0 & 
   \end{tikzcd}
  \end{center}

 \end{remone}
 
 \begin{defone} \label{Def Segal Obj}
  A simplicial object $X$ is a {\it Segal object} if for every $n \geq 2$ the cone $\tilde{f}_X(n)$ is a limit cone for the map $f_X(n)$.
  Informally, we can say the map 
  $$ (\alpha_0^*, ... , \alpha_{n-1}^*) : X_n \xrightarrow{\simeq} X_1 \underset{X_0}{\times} X_1 \underset{X_0}{\times} ... \underset{X_0}{\times} X_1 $$
  is homotopy equivalence in $\C$.
 \end{defone}
 
 Segal objects have their own higher category.
 
 \begin{defone} \label{Def CSS of Seg Obj}
  We define $Seg(\C)$ as the sub full subcategory of the category $\C^{\Delta{op}}$ generated by Segal objects.
 \end{defone}

%
%
%

 Similar to the case of Segal spaces, Segal objects have their own internal category theory that we will discuss in the next subsection.
 
 
 
 
 \subsection{Category Theory of Segal Objects} \label{Subsec Category Theory of Segal Objects}
 Segal objects have their internal homotopy theory. The key difference is that Segal spaces are 
 Segal object in the 
 complete Segal space of spaces which is homotopically generated by the final object (i.e. there is a well-defined notion of membership). 
 This is not true for an arbitrary 
 complete Segal space and so it is not enough to check conditions only for points, which complicates the definitions to some extent.
 
 \begin{notone}
  For this subsection $T: \Delta^{op} \to \C$ is a fixed Segal object in $\C$.
 \end{notone}

 \begin{defone} \label{Def Obj of Seg Obj}
  We define the \emph{objects of $T$} as objects of the over-CSS
 $$Ob(T) = Ob(\C_{/T_0})$$ i.e. the set of maps into $T_0$. Thus objects of $T$ are the set
 $$ Ob(T) = \{ x \in \C_1: t(x) = T_0 \}. $$
 We say an object $x:D \to T_0$ in $T$ has {\it context} $D$ if the domain of $x$ is $D$. 
 \end{defone}

 \begin{remone} \label{Rem Type of Object}
  Notice that every object has an underlying context. The context plays an important role when we later define morphisms and composition.
  We say an object has no context if the domain is the final object $x: * \to T_0$.
 \end{remone}

 
 \begin{defone} \label{Def Map in T}
  A {\it morphism} in T with context $D$ is a map $f: D \to T_1$
 \end{defone}

 \begin{defone} \label{Def Source Target}
  Let $f: D \to T_1$ be a morphism in $T$. Then we define the {\it source of f} as $sf:D \to T_0$ and the {\it target of f} as $tf: D \to T_0$.
  Notice that the source and target of $f$ has the same context. 
 \end{defone}

 $T_1$ gives us a global way to access morphisms, however, we also want to be able to discuss morphisms
 between two objects.
 
 \begin{defone} \label{Def Dep Product}
  Let $\C$ be a locally Cartesian closed CSS. Moreover, let $D$ be a fixed object. Then the map of CSS
  $$D \times -: \C \to \C_{/D}$$
  has a right adjoint, which we denote by 
  $$ \prod_D : \C_{/D} \to \C$$
  and call it the object of sections. 
 \end{defone}

 \begin{remone}
  For the remainder of this subsection let $\C$ be locally Cartesian closed.
 \end{remone}

 \begin{defone} \label{Def Morphism of Seg obj}
  Let $x,y: D \to T_0$ be two objects with context $D$. We define the mapping object $map_T(x,y)$ as
   $$ map_T(x,y) = \prod_D(x,y)^*T_1.$$
   We can depict the situation in the following diagram 
   \begin{center}
    \begin{tikzcd}[row sep=0.5in, column sep=0.5in]
     map_T(x,y) \arrow[d] & (x,y)^*T_1 \arrow[l, "\prod_D"'] \arrow[r] \arrow[d] & T_1 \arrow[d, "(s \comma t)"] \\
     * & D \arrow[l] \arrow[r, "(x \comma y)"] & T_0 \times T_0
    \end{tikzcd}
   \end{center}

   \par 
  
 \end{defone}
 
 \begin{remone}
  How does this object recover maps with source $x: D \to T_0$ and target $y: D \to T_0$? We have following equivalences in $\C$
  $$map_{\C}(*, map_T(x,y)) \simeq map_{/D}(D, (x,y)^*T_1) \simeq map_{/T_0 \times T_0}(D,T_1)$$
  Thus a map $* \to map_T(x,y)$ is exactly the data of a commuting diagram 
  \begin{center}
   \begin{tikzcd}[row sep=0.5in, column sep=0.5in]
    D \arrow[rr, dashed] \arrow[dr, "(x \comma y)"] & & T_1 \arrow[dl, "(x \comma y)"] \\
    & T_0 \times T_0
   \end{tikzcd}
  \end{center}
  This is exactly the data of a morphism in $T$ which has source $x$ and target $y$.
  \par 
   More generally a map $z \to map_T(x,y)$ over $T_0 \times T_0$ can be characterized as
   \begin{center}
    \begin{tikzcd}[row sep=0.5in, column sep=0.5in]
     z \times D \arrow[rr, dashed] \arrow[dr, "(x \comma y) \pi_2"'] & & T_1 \arrow[dl, "(s \comma t)"]\\
     & T_0 \times T_0 & 
    \end{tikzcd}
   \end{center}
  \par 
  In particular, the definition comes with an evaluation map 
  $$ev: D \times map_T(x,y) \to T_1$$
  over $T_0 \times T_0$.
 \end{remone}

 \begin{remone}
  Notice if $x,y$ have no context then the map $\prod_*$ is just the identity map and so $map_T(x,y) = (x,y)^*T_1$.
  In particular, in this case we can define a mapping object just with limits. 
 \end{remone}


%
%

 \begin{notone} 
  As is customary in category theory, we denote a morphism $f$ with source $x$ and target $y$ as $f:x \to y$. 
 \end{notone}

 
 
 Now that we have a notion of morphisms, we need a notion of composition in a Segal objects.
 For that we need to define the {\it object of compositions}.
 Let $(x_0, x_1, ..., x_n):D \to T_0 \times ... \times T_0 = (T_0)^{n+1}$ be objects with context $D$. 
 We define the object of composition as 
 $$ map_T(x_0,x_1,...,x_n) = \prod_D(x_0,x_1,...,x_n)^*T_n.$$
   We can depict the situation in the following diagram 
   \begin{center}
    \begin{tikzcd}[row sep=0.5in, column sep=0.5in]
     map_T(x_0,x_1,...,x_n) \arrow[d] & (x_0,x_1,...,x_n)^*T_n \arrow[d] \arrow[r, "\prod_D"] \arrow[l] & T_n \arrow[d] \\
     * & D \arrow[l] \arrow[r, "(x_0 \comma x_1 \comma ... \comma x_n)"] & (T_0)^{n+1}
    \end{tikzcd}
   \end{center}
  The maps above preserve limit cones. This means the limiting cone $\tilde{f}_T(n)$ as defined in Remark 
  \ref{Rem tildefXn} can be first pulled back to a limiting cone
  
  \begin{center}
   \begin{tikzcd}[row sep=0.5in, column sep=0.2in]
     & & & (x_0,x_1,...,x_n)^*T_n 
     \arrow[ddlll, bend right=20] \arrow[ddl, bend right=20] \arrow[ddrr, bend left = 20] \\
    \\
    (x_0,x_1)^*T_1 \arrow[dr] & & (x_1,x_2)^*T_1 \arrow[dl] & ... & & (x_{n-1},x_n)^*T_1 \arrow[dl] \\
    & D & & ... & D & 
   \end{tikzcd}
  \end{center}
  which tells us that we have following pullback diagram.
  $$ (x_0,x_1,...,x_n)^*T_n \xrightarrow{ \ \ \simeq \ \ } (x_0,x_1)^*X_1 \underset{D}{\times} ... \underset{D}{\times} (x_{n-1},x_n)^*X_1$$
  Applying the map $\prod_D$ to this limiting cone we get
  \begin{center}
   \begin{tikzcd}[row sep=0.5in, column sep=0.2in]
     & & & map_T(x_0,x_1,...,x_n) 
     \arrow[ddlll, bend right=20] \arrow[ddl, bend right=20] \arrow[ddrr, bend left = 20] \\
    \\
    map_T(x_0,x_1) \arrow[dr] & & map_T(x_1,x_2) \arrow[dl] & ... & & map_T(x_{n-1},x_n) \arrow[dl] \\
    & * & & ... & * & 
   \end{tikzcd}
  \end{center}
  we will depict this limiting cone with the product map
  $$ (\alpha_0, ..., \alpha_n):  map_T(x_0,x_1,...,x_n) \xrightarrow{ \ \ \simeq \ \ } map_T(x_0,x_1) \times ... \times map_T(x_{n-1},x_n).$$


 \begin{defone} \label{Def Composition}
 Let $x,y,z$ be three objects with context $D$. We have the following equivalence:
 $$ (\alpha_0, \alpha_1): map_T(x,y,z) \xrightarrow{ \ \ \simeq \ \ } map_T(x,y) \times map_T(y,z). $$
 With this in hand we can define the composition of maps. Let $(\alpha_0 , \alpha_1)^{-1}$ be a choice of inverse for the equivalence above. 
 Let $f \in map_T(x,y)$ and $g \in map_T(y,z)$. We define the composition as follows.
 $$ * \xrightarrow{(f,g)} map_T(x,y) \times map_T(y,z) \xrightarrow{(\alpha_0,\alpha_1)^{-1}}
    map_T(x,y,z) \xrightarrow{ \ \ d_1 \ \ } map_T(x,z)$$
 We call this composition map $$ g \circ f: * \to map_T(x,z).$$  
 \end{defone}

 \begin{remone}
  Note that the map is defined only up to a choice of inverse, but the space of inverses is contractible and so any two choices are equivalent. 
 \end{remone}
 
 \begin{remone} \label{Rem Comp like CSS}
  There is another way of understanding the composition.
  Let $Sq((\alpha_0,\alpha_1),d_1)$ be the subspace of $\C_2 \times_{\C_1} \C_2$ generated by squares of the form
  \begin{center}
   \begin{tikzcd}[row sep=0.5in, column sep=0.5in]
    & map_T(x,y) \times map_T(y,z) & \\
    * \arrow[rr] \arrow[ur] \arrow[dr] & & map_T(x,y,z) \arrow[ul, "( \alpha_0 \comma \alpha_1)"', "\simeq"] \arrow[dl, "d_1"] \\
    & map_T(x,z) &
   \end{tikzcd}
  \end{center}
  The map $Sq((\alpha_0,\alpha_1),d_1) \to map_{\C}(*, map_T(x,y)) \times map_{\C}(*,map_T(y,z))$ is a trivial fibration.
  Thus we get following diagram
  \begin{center}
   \begin{tikzcd}[row sep=0.5in, column sep=0.5in]
    Sq((\alpha_0,\alpha_1),d_1) \arrow[d, twoheadrightarrow, "\simeq"] \arrow[r] & map_{\C}(*,map_{\C}(x,z)\\
    map_{\C}(*, map_T(x,y)) \times map_{\C}(*,map_T(y,z))
   \end{tikzcd}
  \end{center}
  A composition of a map $(f,g) \in map_{\C}(*, map_T(x,y)) \times map_{\C}(*,map_T(y,z))$ is a choice of lift to 
  $Sq((\alpha_0,\alpha_1),d_1)$ and the projection
  to $map_{\C}(*,map_{\C}(x,z))$.
  \par 
  This way of defining a composition is exactly in line with the definition of a composition in Segal spaces \cite[5.3]{Re01}.
 \end{remone}

 \begin{defone} \label{Def Identity Map}
  For every object $x: D \to T_0$, there is a morphism $s_0x: D \to T_1$, which we denote by $id_x$. 
  Using the simplicial identities we know that $id_x: x \to x$.
 \end{defone}

 
 \begin{defone} \label{Def Homotopic Morphism}
  Let $f,g: D \to T_1$ be two morphisms with context $D$. We say $f$ is homotopic to $g$ if the corresponding points in the 
  mapping space $map_{\C}(D,T_1)$ are homotopic.
 \end{defone}

 
 Composition behaves as one might expect.
 \begin{propone} \label{Prop Comp Ass Id}
  Let $x,y,z,w: D \to T_1$ be four objects with context $D$.
  For three maps $f \in map_{T}(x,y)$, $g \in map_{T}(y,z)$ and $h \in map_{T}(y,w)$ we have 
  $ ( h \circ g) \circ f \sim h \circ (g \circ f) $ and $ f \circ id_x \sim id_y \circ f \sim f$.
 \end{propone}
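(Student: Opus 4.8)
The plan is to reduce the statement about composition in $T$ to the analogous statement for Segal spaces, which is already known from \cite[Section 5]{Re01}, by exploiting that everything is controlled by mapping spaces $map_{\C}(D,T_\bullet)$. The key observation is that applying $map_{\C}(D,-)$ to the simplicial object $T$ yields a simplicial space $map_{\C}(D,T_\bullet)$, and the Segal condition for $T$ (Definition \ref{Def Segal Obj}) says that the structure maps $(\alpha_0^*,\dots,\alpha_{n-1}^*)\colon T_n \to T_1 \times_{T_0} \cdots \times_{T_0} T_1$ are equivalences in $\C$; since $map_{\C}(D,-)$ preserves finite limits and equivalences, $map_{\C}(D,T_\bullet)$ is a Segal space (after Reedy fibrant replacement, which does not change the homotopy type of the mapping spaces). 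Moreover, by the adjunction computations in the remarks following Definition \ref{Def Morphism of Seg obj}, for objects $x,y$ with context $D$ we have $map_{\C}(*, map_T(x,y)) \simeq map_{/T_0\times T_0}(D,T_1)$, i.e.\ the path components and homotopy type here are exactly the mapping spaces between the corresponding $0$-simplices $x,y$ of the Segal space $map_{\C}(D,T_\bullet)$.

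The main steps would be as follows. First, fix the context $D$ and set $W = $ a Reedy fibrant replacement of $map_{\C}(D, T_\bullet)$; verify it is a Segal space and that the $0$-simplices $x,y,z,w$ determine objects of $W$ in the sense of \cite[Section 5]{Re01}. Second, identify the composition of Definition \ref{Def Composition} with the composition in the Segal space $W$: unwinding Definition \ref{Def Composition}, the composite $g\circ f$ is obtained by choosing an inverse to the equivalence $(\alpha_0,\alpha_1)\colon map_T(x,y,z)\xrightarrow{\simeq} map_T(x,y)\times map_T(y,z)$ and applying $d_1$; after applying $map_{\C}(*,-)$ and using the identifications above, this is precisely the Segal-space composition recipe of \cite[5.3]{Re01} (this is essentially already spelled out in Remark \ref{Rem Comp like CSS}). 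Third, invoke the known homotopy-coherent associativity and unitality of composition in a Segal space (Rezk, \cite[Section 5]{Re01}, or Remark \ref{Rem Comp like CSS}'s reformulation): in $W$ we have $(h\circ g)\circ f \sim h\circ(g\circ f)$ and $f\circ id_x \sim id_y\circ f \sim f$ as homotopies of points in the relevant mapping spaces. Fourth, translate these homotopies back: since $f$ homotopic to $g$ in the sense of Definition \ref{Def Homotopic Morphism} means exactly that they are homotopic as points of $map_{\C}(D,T_1) = W_1$, the homotopies obtained in $W$ are precisely the homotopies asserted in the Proposition.

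For the identity statement one extra check is needed: that the morphism $id_x = s_0 x$ of Definition \ref{Def Identity Map} corresponds, under the identification, to the degeneracy of the $0$-simplex $x$ in $W$, hence is the identity in the Segal space sense. This is immediate because $map_{\C}(D,-)$ is a functor on simplicial objects and so commutes with the degeneracy map $s_0$, so $s_0 x \in W_1$ is literally $s_0$ applied to $x \in W_0$.

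The main obstacle I expect is the second step: carefully matching the composition defined abstractly via the adjoint functor $\prod_D$ and the pulled-back Segal cone with the concrete composition in the Segal space $W$, keeping track of which equivalences, inverses, and face maps correspond. The potential subtlety is that $map_T(x,y,z) = \prod_D(x,y,z)^*T_2$ involves applying $\prod_D$, whereas the Segal space $W$ is built by applying $map_{\C}(D,-)$ directly; one must check that the canonical map $map_{\C}(*, \prod_D(x,y,z)^*T_2) \simeq map_{/D}(D,(x,y,z)^*T_2) \simeq map_{/T_0^3}(D,T_2)$ is compatible with the Segal decompositions on both sides, which is where the ``maps above preserve limit cones'' remark before Definition \ref{Def Composition} does the real work. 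Once that compatibility is in place, the proposition is a formal consequence of the corresponding facts for Segal spaces, and the remainder is bookkeeping.
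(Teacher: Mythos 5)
Your argument is correct, but it takes a genuinely different route from the paper. The paper's own proof is a one-liner: since the composition of Definition \ref{Def Composition} is packaged, via the trivial fibration of squares in Remark \ref{Rem Comp like CSS}, in exactly the same form as Rezk's composition for Segal spaces, the proof of \cite[Proposition 5.4]{Re01} is asserted to go through verbatim in the internal setting. You instead externalize: you apply $map_{\C}(D,-)$ to $T$ to produce an honest Segal space $W$ (after Reedy fibrant replacement), check that mapping objects, composition, identities and the homotopy relation of Definition \ref{Def Homotopic Morphism} for morphisms with context $D$ transport to the corresponding structures of $W$, and then quote Rezk's result as a black box. What your approach buys is that you never have to re-examine Rezk's proof --- the associativity and unitality homotopies are literally imported from a Segal space --- at the price of verifying the dictionary between $\prod_D(x_0,\dots,x_n)^*T_n$ and the fibers of $W_n \to W_0^{n+1}$, which you correctly identify as the only delicate point and which is exactly what the ``preservation of limit cones'' discussion preceding Definition \ref{Def Composition} supplies. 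This externalization is in fact the strategy the paper adopts systematically later (Remark \ref{Rem CSO to CSS} and the Fundamental Theorem), so your proof anticipates that machinery; the paper's one-liner is shorter but leaves more to the reader. Your use of the locally Cartesian closed hypothesis (through $\prod_D$) matches the standing assumption of the subsection, so nothing is lost there.
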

 \begin{proof}
 As we defined composition in Segal objects the same way it is defined in a Segal spaces (Remark \ref{Rem Comp like CSS}) 
 the exact same proof generalizes to this setting.
 For a proof of those same properties in a Segal space see \cite[Proposition 5.4]{Re01}.
 \end{proof}

Note that maps of Segal objects have the correct functoriality properties.
 
 \begin{propone}
  Let $F: W \to V$ be a map of Segal objects. For any two objects $x_0,x_1,...,x_n: D \to W$ we get a map 
  $$F(x_0,x_1,...,x_n): map_W(x_0,x_1,...,x_n) \to map_V(Fx_0,FX_1,...,Fx_n)$$
  Moreover, for any two maps $f: x \to y$ and $g: y \to z$ there is an equivalence $F(g) \circ F(f) \sim F(g \circ f)$.
 \end{propone}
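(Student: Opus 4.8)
The plan is to build the map $F(x_0,\dots,x_n)$ by transporting the defining pullback data of $\map_W(x_0,\dots,x_n) = \prod_D(x_0,\dots,x_n)^*W_n$ along $F$, using that $F$ is a map of simplicial objects (hence commutes with all the face/degeneracy maps and with the limit cones defining the Segal condition), and that $\prod_D$ is natural in the relevant sense. First I would note that $F$ induces $F_n : W_n \to V_n$ for every $n$, compatible with $(s,t)$ and all the $\alpha_i^*$, since $F$ is a natural transformation of functors $\Delta^{op}\to\C$. Given objects $x_0,\dots,x_n : D \to W_0$, the composites $F_0\circ x_i : D \to V_0$ are the objects $Fx_i$, and the naturality square for $F$ at level $n$ shows that the pullback $(x_0,\dots,x_n)^*W_n$ maps canonically to $(Fx_0,\dots,Fx_n)^*V_n$ over $D$ (the outer square to $(V_0)^{n+1}$ commutes because $(Fx_0,\dots,Fx_n) = F_0^{\times(n+1)}\circ(x_0,\dots,x_n)$, so the universal property of the pullback defining $(Fx_0,\dots,Fx_n)^*V_n$ produces the map). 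Applying $\prod_D$ — which is a functor $\C_{/D}\to\C$ — to this map over $D$ yields the desired
$$F(x_0,\dots,x_n): \map_W(x_0,\dots,x_n) \to \map_V(Fx_0,\dots,Fx_n).$$

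For the compatibility with composition, I would trace through Definition \ref{Def Composition}. Composition of $f : x\to y$ and $g : y \to z$ is the zig-zag
$$ * \xrightarrow{(f,g)} \map_W(x,y)\times\map_W(y,z) \xrightarrow{(\alpha_0,\alpha_1)^{-1}} \map_W(x,y,z) \xrightarrow{d_1} \map_W(x,z), $$
and similarly for $Fx,Fy,Fz$ in $V$. The key observation is that the map $F(x_0,\dots,x_n)$ constructed above is compatible with the structure maps: it commutes with the projections $\alpha_i$ (since these come from $\alpha_i^* : W_n\to W_1$ in $\C$, with which $F$ commutes, pulled back and pushed through $\prod_D$), and it commutes with $d_1$ (likewise, since $d_1 : W_2 \to W_1$ is a simplicial structure map respected by $F$). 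In particular, the two equivalences $(\alpha_0,\alpha_1) : \map_W(x,y,z)\xrightarrow{\simeq}\map_W(x,y)\times\map_W(y,z)$ and its analogue in $V$ fit into a commuting square with $F(x,y,z)$ and $F(x,y)\times F(y,z)$; passing to a choice of inverses, the resulting square commutes up to the contractible ambiguity of that choice (Remark after Definition \ref{Def Composition}), which is exactly the "$\sim$" in the statement. Chaining these commuting pieces together with $F(f) = F(x,y)\circ f$ etc. gives
$$ F(g)\circ F(f) \;\sim\; d_1\circ(\alpha_0,\alpha_1)^{-1}\circ (F(x,y)\times F(y,z))\circ(f,g) \;=\; d_1\circ(\alpha_0,\alpha_1)^{-1}\circ F(x,y,z)\circ\big((\alpha_0,\alpha_1)^{-1}(f,g)\big)\cdots $$
and unwinding yields $F(g)\circ F(f)\sim F(g\circ f)$.

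The main obstacle I anticipate is purely bookkeeping: verifying that $\prod_D$ really does produce a \emph{natural} map from the level-wise comparison maps, i.e. that the square relating $(x_0,\dots,x_n)^*W_n \to (Fx_0,\dots,Fx_n)^*V_n$ over $D$ to the one at level $1$ (via any $\alpha_i^*$) commutes before applying $\prod_D$; this is where one must be careful, since pullback along a varying map $(x_0,\dots,x_n)$ is only pseudo-functorial. The cleanest route is to phrase everything in the slice $\C_{/D}$ at once: $F$ induces a map of diagrams $f_W(n)\Rightarrow f_V(n)$ of shape $\n g(n)$ landing in $\C_{/D}$ (after pulling back along the $x_i$), this map of diagrams is automatically compatible with the limit cones $\tilde f_W(n)$, $\tilde f_V(n)$ by the Segal condition and uniqueness of maps into a limit, and $\prod_D$ carries this whole compatible system to the corresponding system in $\C$. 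Once that naturality is in place, the composition statement is a formal diagram chase exactly as above, and I would handle the "up to homotopy" subtleties by the same argument as in Proposition \ref{Prop Comp Ass Id} — namely, reduce to the $\map_\C(*,-)$-level and invoke that the relevant fibrations/spaces of inverses are contractible, so the zig-zags agree in the homotopy category.
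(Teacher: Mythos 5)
Your proposal is correct and follows essentially the same route as the paper: the paper constructs $F(x_0,\dots,x_n)$ from the commutativity of the diagram of slice categories relating $(x_0,\dots,x_n)^*$, $(Fx_0,\dots,Fx_n)^*$, pushforward along $F_0^{\times n}$, and $\prod_D$, which is exactly your ``naturality square plus universal property of the pullback, then apply $\prod_D$'' argument, and it likewise deduces compatibility with composition from the case $F(x,y,z)$. Your write-up is in fact more detailed than the paper's (which disposes of the composition claim in one sentence), and your care about the pseudo-functoriality of pullback and the contractible choice of inverse is exactly the bookkeeping the paper leaves implicit.
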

 
 \begin{proof}
  The existence of the map $F(x_0,x_1,...,x_n)$ follows from following commutative diagram
  \begin{center}
   \begin{tikzcd}[row sep=0.7in, column sep=0.7in]
    \C_{/W_0^n} \arrow[r, "(x_0 \comma ... \comma x_n)^*"] \arrow[d, "(F^n_0)_!"] & \C_{/D} \arrow[d, equal] \arrow[r, "\prod_D"] & 
    \C \arrow[d, equal] \\
    \C_{/V_0^n} \arrow[r, "(Fx_0 \comma ... \comma Fx_n)^*"] & \C_{/D} \arrow[r, "\prod_D"] & \C
   \end{tikzcd}
  \end{center}
  Using this map in the particular case of $F(x_0,x_1,x_2)$ implies that the $F$ preserves composition.
 \end{proof}
 
 \subsection{Homotopy Equivalences in Segal Objects} \label{Subsec Homotopy Equivalences in Segal Objects}
 The goal in this subsection is to carefully study weak equivalences inside a Segal object.
 Thus for this section let $T$ be a fixed Segal object.

 \begin{defone} \label{Def Homotopy Equiv Comp}
  A map $f: x \to y$ is a {\it homotopy equivalence} if there exists maps $g,h: y \to x$ such that $fg \sim id_x$ and $hf \sim id_y$. 
 \end{defone}
 
 \begin{remone}
  As is customary in higher category theory, composition is not just a fact but actual data. 
  In particular, the definition above implies that there are maps $\iota_1, \iota_2 : D \to T_2$, that can be summarized in following diagrams.
   \begin{center}
  \begin{tikzcd}[row sep=0.5in, column sep=0.5in]
   & x \arrow[dr, "f"] & \\
   y \arrow[ur, "g"] \arrow[rr, "id_y", ""{name=U, above}] & & y 
   \arrow[start anchor={[xshift=-10ex, yshift=10ex]}, to=U, phantom, "\iota_1"]
  \end{tikzcd}
  \hspace{0.5in}
  \begin{tikzcd}[row sep=0.5in, column sep=0.5in]
   & y \arrow[dr, "h"] & \\
   x \arrow[ur, "f"] \arrow[rr, "id_x", ""{name=U, above}] & & x 
   \arrow[start anchor={[xshift=-10ex, yshift=10ex]}, to=U, phantom, "\iota_2"]
  \end{tikzcd}
 \end{center}
 \end{remone}

 Now that we have a definition of a homotopy equivalence, we want to define an object that classifies all 
 homotopy equivalences. In \cite{Re01} the space of all homotopy equivalences is simply defined as the subspace of $T_1$, 
 consisting of all homotopy equivalences. Unfortunately we cannot make such a definition as we have no notion of subobjects.
 However, there is an equivalent space that can be defined using a pullback, namely the space $T_{hoeqchoice}$ 
 \cite[Theorem 2.44]{Ra18a}.
 Thus we will generalize this space to the setting of Segal objects.
 
 In order to do that we will give a second equivalent definition that will motivate further constructions.
 First we first need some definitions.
 
 \begin{defone} \label{Def Z3}
  Let $z(3): \n g_T(3) \to \C$ be defined as follows.
  For objects we have $z(3)(01)=z(3)(12)=z(3)(23)= T_1$ and $z(3)(1)=z(3)(2) = T_0$. 
  Moreover, morphisms which have source $1$ map to $s: T_1 \to T_0$ and morphisms which have source $2$ map to $t:T_1 \to T_0$.
  We define $Z^T(3)$ to be the limit cone of this diagram. We can depict this as following diagram.
  \begin{center}
   \begin{tikzcd}[row sep=0.4in, column sep=0.5in]
     & & Z^T(3) 
     \arrow[ddll, bend right=20] \arrow[dd] \arrow[ddrr, bend left = 20] \\
    \\
    T_1 \arrow[dr, "s"] & & T_1 \arrow[dl, "s"] \arrow[dr, "t"] & & T_1 \arrow[dl, "t"]  \\
    & T_0 & & T_0 & 
   \end{tikzcd}
  \end{center}
 \end{defone}
 This constructions comes with two important simplicial maps:
 $$ (d_1d_3,d_0d_3,d_1d_0) : T_3 \to Z^T(3) $$
 $$ (s_0d_0,id_{T_1},s_0d_1): T_1 \to Z^T(3) $$
 Having these two maps we can give a second characterization of homtopy equivalences in $T$.
 
 \begin{lemone} \label{Lemma Homotopy Equiv Lifting}
  A map $f: D \to T_1$ is a \emph{homotopy equivalence in T} if and only if there
  exists a $2$-cell $\sigma$ that lifts the diagram below. 
  We call the lift $d_1\sigma = \tilde{f}: D \to T_3$ 
 \begin{center}
  \begin{tikzcd}[row sep=0.7in, column sep=0.9in]
     &   & T_3 \arrow[d, "(d_1d_3 \comma d_0d_3 \comma d_1d_0)"] \\
   D \arrow[urr, dashed, "\tilde{f}"] \arrow[r, "f"'] & T_1 \arrow[r, "(s_0d_0 \comma id_{T_1} \comma s_0d_1)"'] & 
   Z^T(3)
  \end{tikzcd}
 \end{center} 
 \end{lemone}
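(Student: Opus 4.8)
The plan is to unwind both sides of the claimed equivalence into statements about the Segal object $T$ and then match them using the Segal condition, mimicking the argument for complete Segal spaces in \cite[Section 6]{Re01} but replacing ``points of $T_1$'' by ``maps $D \to T_1$''. First I would interpret the data of a lift $\sigma$. Since $T$ is a Segal object, the map $(\alpha_0^*,\alpha_1^*,\alpha_2^*)\colon T_3 \xrightarrow{\simeq} T_1 \times_{T_0} T_1 \times_{T_0} T_1$ is an equivalence, and $Z^T(3)$ is by Definition \ref{Def Z3} exactly the limit $T_1 \times_{T_0}^{s,s} T_1 \times_{T_0}^{t,t} T_1$, i.e.\ the same span but with the middle leg using $s$ on the left copy and $t$ on the right copy. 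So the vertical map $(d_1d_3,d_0d_3,d_1d_0)\colon T_3 \to Z^T(3)$ records, for a $3$-simplex, its three ``long edges'' $x_0\to x_1$, $x_0 \to x_2$, $x_1 \to x_3$ (reading off the appropriate faces), and the bottom composite sends $f\colon D\to T_1$ with source $x$, target $y$, to the configuration $(\mathrm{id}_x, f, \mathrm{id}_y)$ sitting in $Z^T(3)$ via the degeneracies $s_0d_0, \mathrm{id}_{T_1}, s_0d_1$.

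Next I would produce, from a lift $\tilde f\colon D \to T_3$, the witnesses demanded by Definition \ref{Def Homotopy Equiv Comp}. Using the Segal equivalence, $\tilde f$ is the data of three composable edges; the faces $d_3\tilde f$ and $d_0\tilde f$ of this $3$-simplex are $2$-simplices in $T_2$, and (after pulling back along $D$, exactly as in the discussion preceding Definition \ref{Def Composition}) these give the triangles $\iota_1,\iota_2$ exhibiting maps $g,h\colon y \to x$ with $fg \sim \mathrm{id}$ and $hf \sim \mathrm{id}$; the compatibility of the long edges with the prescribed $(\mathrm{id}_x,f,\mathrm{id}_y)$ is precisely what forces the outer edges of these triangles to be identities and the diagonal to be $f$. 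Conversely, given $g,h$ and the filling triangles $\iota_1,\iota_2$ of Definition \ref{Def Homotopy Equiv Comp}, I would assemble a $3$-simplex: the two triangles glue along their common edge $f$ (or along an identity edge, depending on orientation) to specify an element of the appropriate matching object for $T_3$, and Segal-ness lets me lift this to an honest $\tilde f\colon D \to T_3$ whose image in $Z^T(3)$ is computed to be $(s_0d_0 f, f, s_0 d_1 f)$, i.e.\ $\tilde f$ lies over $f$ as required. Because the two triangles involve the two potentially different one-sided inverses $g$ and $h$, I would need the standard observation that a map with a left inverse and a right inverse has them homotopic, so a single $3$-simplex suffices — this is where Proposition \ref{Prop Comp Ass Id} (associativity and unitality) gets used, exactly as in Rezk's argument.

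The main obstacle is bookkeeping rather than conceptual: one must be careful that all constructions are performed in the slice $\C_{/D}$ (or rather are natural in the context $D$), since in a general ambient CSS $\C$ there is no notion of ``point'' and a morphism is a map out of $D$; so every appeal to ``choose a filler'' must instead be ``the relevant map of spaces $\mathrm{Map}_\C(D,-)$ is surjective on $\pi_0$,'' which follows from the cited equivalences being genuine equivalences in $\C$ and hence inducing equivalences on mapping spaces out of $D$. A secondary subtlety is matching the precise faces: I would pin down, once and for all, which of $d_1d_3, d_0d_3, d_1d_0$ corresponds to which edge of the composite-of-three picture, and check that under the Segal equivalence $T_3 \simeq T_1\times_{T_0}T_1\times_{T_0}T_1$ these three maps become the three ``outer'' edges; with that dictionary fixed, the equivalence in the statement becomes the observation that a lift exists iff the middle edge $f$ extends to such a configuration, which is the content of Definition \ref{Def Homotopy Equiv Comp}. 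Since the composition in $T$ was set up (Remark \ref{Rem Comp like CSS}) to agree formally with composition in a Segal space, I would then simply invoke that the corresponding statement for Segal spaces, \cite[Section 6]{Re01}, transports verbatim.
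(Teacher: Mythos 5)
Your proposal follows essentially the same route as the paper: both directions hinge on the Segal-condition equivalence $(d_3,d_0)\colon T_3 \xrightarrow{\ \simeq\ } T_2 \times_{T_1} T_2$, which identifies a lift $\tilde f$ with the pair of composition witnesses $\iota_1,\iota_2$ from Definition \ref{Def Homotopy Equiv Comp} glued along their common edge $f$. One small remark: the step where you invoke $g\sim h$ via Proposition \ref{Prop Comp Ass Id} is unnecessary, since the two triangles are glued along the edge $f$ rather than along an inverse, so the two one-sided inverses may remain distinct and no associativity argument is needed.
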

 
 \begin{proof}
  If there is a map $\tilde{f}: D \to T_3$ that lifts $f$, then $d_1d_3\tilde{f}:D \to T_1$ and $d_0d_0\tilde{f}:D \to T_1$ give us the two inverses.
  For the other side, we first need following fact.
  The map $T_3 \to Z^T(3)$ factors through the map $(d_3,d_0): T_3 \to T_2 \times_{T_1} T_2$, which is an equivalence by 
   the Segal condition.
   \par 
   By the definition of limits, we have an equivalence $D \to T_2 \times_{T_1} T_2$ over $Z^T(3)$ which is determined by 
   following three maps:
   \begin{center}
    \begin{tikzcd}[row sep=0.5in, column sep=0.4in]
     D \arrow[dr, "(f \comma s_0d_0f)"'] \arrow[rr, dashed, "\iota_1"] & & T_2 \arrow[dl, "(d_1 \comma d_0)"] \\
     & T_1 \underset{T_0}{\times} T_1& 
    \end{tikzcd}
    \begin{tikzcd}[row sep=0.5in, column sep=0.4in]
     D \arrow[dr, "f"'] \arrow[rr, dashed, "f"] & & T_1 \arrow[dl, "id_{T_1}"] \\
     & T_1 & 
    \end{tikzcd}
     \begin{tikzcd}[row sep=0.5in, column sep=0.4in]
     D \arrow[dr, "(s_0d_1f \comma f)"'] \arrow[rr, dashed, "\iota_2"] & & T_2 \arrow[dl, "(d_2 \comma d_1)"] \\
     & T_1 \underset{T_0}{\times} T_1 & 
    \end{tikzcd}
   \end{center}
   that agree with each other appropriately. However, this is exactly the two composition $\iota_1, \iota_2: D \to T_2$.
 \end{proof}

 Having a second criterion for equivalences, we can finally give the main definition.
 
 
 \begin{defone} \label{Def Object of Equiv}
  We define the {\it object of equivalences} as the following pullback
  \begin{center}
  \pbsq{T_{hoequiv}}{T_3}{T_1}{Z^T(3)}{e}{U}{
  (d_1d_3 \comma d_0d_3 \comma d_1d_0)}{(s_0d_0 \comma id \comma s_0d_1)}
 \end{center}
 \end{defone}

 We have the following facts about homotopy equivalences in Segal objects:
 \begin{lemone}
  A map  $f:D \to T_1$, is a homotopy equivalence if and only if there exists a $2$-cell $\sigma$ of the form.
  \begin{center}
   \begin{tikzcd}[row sep=0.6in, column sep=0.6in]
    & T_{hoequiv} \arrow[d, "U"]  \\
    D \arrow[ur, "witeq(f)", dashed] \arrow[r, "f"', ""{name=U, above}] & T_1 
    \arrow[start anchor={[xshift=0ex, yshift=8ex]}, to=U, phantom, "\sigma"]
   \end{tikzcd}
  \end{center}
  We denote the map $d_1\sigma: D \to T_{hoequiv}$ as $witeq(f)$, as it is a witness for the fact that $f$ is an equivalence.
 \end{lemone}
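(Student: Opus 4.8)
The plan is to read this lemma off directly from Lemma~\ref{Lemma Homotopy Equiv Lifting} together with the defining pullback square of Definition~\ref{Def Object of Equiv}. Recall $T_{hoequiv}$ was defined as the pullback of $(d_1d_3,d_0d_3,d_1d_0): T_3 \to Z^T(3)$ along $(s_0d_0,id,s_0d_1): T_1 \to Z^T(3)$, with structure maps $e: T_{hoequiv} \to T_3$ and $U: T_{hoequiv} \to T_1$. Since $\C$ is a CSS and the square is an $(\infty,1)$-categorical pullback, the key observation I would use is that a map $D \to T_{hoequiv}$ is precisely the same data as a pair of maps $\tilde f : D \to T_3$, $g : D \to T_1$ together with a $2$-cell witnessing $(d_1d_3,d_0d_3,d_1d_0)\circ\tilde f \simeq (s_0d_0,id,s_0d_1)\circ g$; taking $g = f$, this is exactly a solution to the lifting problem of Lemma~\ref{Lemma Homotopy Equiv Lifting}.

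First I would handle the forward implication. Assuming $f$ is a homotopy equivalence, Lemma~\ref{Lemma Homotopy Equiv Lifting} supplies a map $\tilde f : D \to T_3$ and a $2$-cell exhibiting $(d_1d_3,d_0d_3,d_1d_0)\circ\tilde f \simeq (s_0d_0,id,s_0d_1)\circ f$. I would then feed $(\tilde f, f)$ and this $2$-cell into the universal property of the pullback defining $T_{hoequiv}$ to obtain the $2$-cell $\sigma$ of the statement, whose induced map $d_1\sigma : D \to T_{hoequiv}$ is the desired $witeq(f)$; by construction $U \circ witeq(f) \simeq f$ and $e \circ witeq(f) \simeq \tilde f$.

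For the converse I would argue as follows. Given $witeq(f) : D \to T_{hoequiv}$ and a $2$-cell $\sigma$ witnessing $U \circ witeq(f) \simeq f$, set $\tilde f := e \circ witeq(f) : D \to T_3$. The canonical $2$-cell of the pullback square identifies $(d_1d_3,d_0d_3,d_1d_0)\circ e$ with $(s_0d_0,id,s_0d_1)\circ U$; whiskering it with $witeq(f)$ and pasting with $(s_0d_0,id,s_0d_1)$ applied to $\sigma$ yields a $2$-cell $(d_1d_3,d_0d_3,d_1d_0)\circ\tilde f \simeq (s_0d_0,id,s_0d_1)\circ f$. Thus $\tilde f$ solves the lifting problem of Lemma~\ref{Lemma Homotopy Equiv Lifting}, and that lemma then gives that $f$ is a homotopy equivalence, with inverses $d_1d_3\tilde f$ and $d_0d_0\tilde f$.

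The argument is essentially formal, so I do not expect a real obstacle. The one place that needs genuine care — and which I would write out in detail rather than leave to the reader — is the $2$-cell bookkeeping: matching the cone data classified by the pullback $T_{hoequiv}$ with the lifting datum of Lemma~\ref{Lemma Homotopy Equiv Lifting}, and checking that under the pullback's universal property the map singled out by $d_1\sigma$ is the component $witeq(f): D \to T_{hoequiv}$ and not one of the other faces of $\sigma$. No input beyond the finite-limit hypothesis on $\C$ and what already entered Lemma~\ref{Lemma Homotopy Equiv Lifting} is needed.
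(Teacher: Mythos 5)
Your proposal is correct and follows exactly the paper's own argument: the forward direction applies the universal property of the pullback defining $T_{hoequiv}$ to the lift $\tilde{f}$ supplied by Lemma \ref{Lemma Homotopy Equiv Lifting}, and the converse composes the given lift with $e$ to recover a solution to that lemma's lifting problem. The only difference is that you spell out the $2$-cell bookkeeping that the paper's two-sentence proof leaves implicit.
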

 
 \begin{proof}
  If $f$ is a homotopy equivalence then there is a $\tilde{f}:D \to T_3$ making the diagram in Lemma \ref{Lemma Homotopy Equiv Lifting} commute, 
  so by the universality of pullbacks
  we get a two cell  $\sigma$ that lifts the desired diagram. 
  On the other hand if $f$ factors then the map $e \circ witeq(f)$ is exactly the lift 
  which makes $f$ into a homotopy equivalence, by the previous lemma.
 \end{proof}
 
 These results give us two interesting homotopy equivalences in $T$.
 
 \begin{lemone}
  The map $s_0: T_0 \to T_1$ is a homotopy equivalence with context $T_0$. 
 \end{lemone}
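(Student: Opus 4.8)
The plan is to verify the defining criterion of a homotopy equivalence (Definition~\ref{Def Homotopy Equiv Comp}) directly for $f = s_0 \colon T_0 \to T_1$, viewed as a morphism with context $D = T_0$. First I would identify the source and target of this morphism. By the simplicial identities $d_1 s_0 = \mathrm{id}$ and $d_0 s_0 = \mathrm{id}$, so $s(s_0) = d_1 s_0 = \mathrm{id}_{T_0} \colon T_0 \to T_0$ and likewise $t(s_0) = \mathrm{id}_{T_0}$; that is, $s_0 \colon \mathrm{id}_{T_0} \to \mathrm{id}_{T_0}$ as a morphism with context $T_0$, where $\mathrm{id}_{T_0}$ denotes the identity object $T_0 \xrightarrow{=} T_0$ of $\C_{/T_0}$. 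So I need to produce a map $g \colon T_0 \to T_1$ with $s(g) = t(g) = \mathrm{id}_{T_0}$ such that $s_0 \circ g \sim \mathrm{id}$ and $g \circ s_0 \sim \mathrm{id}$, where $\mathrm{id} = s_0(\mathrm{id}_{T_0})$ is the identity morphism of the object $\mathrm{id}_{T_0}$ in the sense of Definition~\ref{Def Identity Map}.

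The natural candidate is $g = s_0$ itself. The key observation is that $s_0 s_0 \colon T_0 \to T_2$ is a degenerate $2$-simplex exhibiting the composite of $s_0$ with $s_0$: indeed $d_0(s_0 s_0) = s_0 d_0 s_0 = s_0$, $d_2(s_0 s_0) = s_0 d_2 s_0 = s_0$ (using $d_2 s_0 s_0 = s_0 d_1 s_0 = s_0$), and $d_1(s_0 s_0) = s_0 d_1 s_0 = s_0$ (using $d_1 s_0 s_0 = s_0 d_1 s_0 = s_0$ by the simplicial identities in degree $2$). So the composite $s_0 \circ s_0$, formed as in Definition~\ref{Def Composition}, is witnessed by the $2$-simplex $s_0 s_0 \colon T_0 \to T_2$ and equals $s_0$. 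Now recall that in Definition~\ref{Def Identity Map} the identity morphism of the object $x = \mathrm{id}_{T_0}$ is $s_0 x$; unwinding the mapping-object description, $s_0(\mathrm{id}_{T_0})$ corresponds precisely to the morphism $s_0 \colon T_0 \to T_1$ with context $T_0$. Hence $s_0 \circ s_0 \sim \mathrm{id}$ on both sides, which is exactly the condition for $s_0$ to be a homotopy equivalence with $g = h = s_0$.

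To make this rigorous inside the internal language I would phrase composition via Remark~\ref{Rem Comp like CSS}: producing the $2$-cell $\sigma$ in $\C$ whose boundary data is $(s_0, s_0)$ in $map_T(\mathrm{id}_{T_0}, \mathrm{id}_{T_0}) \times map_T(\mathrm{id}_{T_0}, \mathrm{id}_{T_0})$ amounts to lifting against the trivial fibration $Sq((\alpha_0,\alpha_1), d_1) \twoheadrightarrow map_{\C}(*, map_T) \times map_{\C}(*, map_T)$, and the degenerate simplex $s_0 s_0$ (transported through the equivalence $(x_0,x_1,x_2)^*T_2 \simeq map_T$ of the object-of-compositions diagram) supplies the canonical such lift, with $d_1$ of it equal to $s_0$ again. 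The main obstacle — and the only real bookkeeping — is keeping the contexts straight: everything here happens with context $D = T_0$ rather than $D = *$, so I must use the mapping-object formalism of Definition~\ref{Def Morphism of Seg obj} and the object-of-compositions diagram rather than the naive point-set picture available for Segal spaces. Once the bookkeeping is set up, the simplicial identities do all the work and no genuine limit computation is needed.
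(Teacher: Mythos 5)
Your proof is correct and takes essentially the same approach as the paper's (first) argument: recognize $s_0$ as the identity morphism of the object $\mathrm{id}\colon T_0 \to T_0$ with context $T_0$ and conclude that $s_0 \circ s_0 \sim s_0$, so $s_0$ is its own two-sided inverse --- the only difference being that the paper cites the unit law of Proposition \ref{Prop Comp Ass Id} where you re-derive it by hand from the degenerate simplex $s_0s_0\colon T_0 \to T_2$. (The paper also records a second, equivalent proof via the lifting criterion of Lemma \ref{Lemma Homotopy Equiv Lifting}, using the totally degenerate map $T_0 \to T_3$, but that is not needed.)
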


 \begin{proof}
  Just to illuminate how the two approaches compare, we give two proofs.
  \par 
  First, notice that $s_0: T_0 \to T_1$ is the identity map for the object $id:T_0 \to T_0$ (with context $T_0$).
  By Proposition \ref{Prop Comp Ass Id} we have $s_0 \circ s_0 \sim s_0$ (where $s_0 \circ s_0$ is the composition inside $T$) 
  and so $s_0$ is a homotopy equivalence by 
  Definition \ref{Def Homotopy Equiv Comp}.
  \par 
  Second, notice that the unique map $T_0 \to T_3$ makes the diagram in Lemma \ref{Lemma Homotopy Equiv Lifting} commute and so 
  $s_0$ is a homotopy equivalence.
 \end{proof}

 \begin{corone}
  The map $i: T_{hoequiv} \to T_1$, is a homotopy equivalence in $T$. 
 \end{corone}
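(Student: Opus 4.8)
The statement to prove is that the structure map $i \colon T_{hoequiv} \to T_1$ is itself a homotopy equivalence in $T$, with context $T_0$... wait, let me reconsider. The context of $i$ is $T_{hoequiv}$ (its domain), so $i \colon T_{hoequiv} \to T_1$ is a morphism with context $T_{hoequiv}$. I want to show it is a homotopy equivalence in the sense of Definition \ref{Def Homotopy Equiv Comp}, equivalently (by the previous lemma) that it lifts the universal square against $U \colon T_{hoequiv} \to T_1$.

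The plan is to use the lifting criterion from the penultimate lemma: a morphism $f \colon D \to T_1$ is a homotopy equivalence in $T$ if and only if there is a $2$-cell $\sigma$ lifting $f$ through $U \colon T_{hoequiv} \to T_1$. Applying this with $D = T_{hoequiv}$ and $f = i$, it suffices to exhibit such a $2$-cell for $i$ itself. First I would observe that $T_{hoequiv}$ is, by Definition \ref{Def Object of Equiv}, the pullback of $(s_0d_0, id, s_0d_1) \colon T_1 \to Z^T(3)$ along $(d_1d_3, d_0d_3, d_1d_0) \colon T_3 \to Z^T(3)$, with legs $U \colon T_{hoequiv} \to T_1$ and $e \colon T_{hoequiv} \to T_3$. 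In particular the square
\begin{center}
 \begin{tikzcd}[row sep=0.5in, column sep=0.5in]
  T_{hoequiv} \arrow[r, "e"] \arrow[d, "U"'] & T_3 \arrow[d, "(d_1d_3 \comma d_0d_3 \comma d_1d_0)"] \\
  T_1 \arrow[r, "(s_0d_0 \comma id \comma s_0d_1)"'] & Z^T(3)
 \end{tikzcd}
\end{center}
commutes. This says precisely that $e \colon T_{hoequiv} \to T_3$ is a lift of $i = U$ through the diagram of Lemma \ref{Lemma Homotopy Equiv Lifting} (with $D = T_{hoequiv}$), because that diagram is exactly ``$\tilde f \colon D \to T_3$ making $(d_1d_3,d_0d_3,d_1d_0)\tilde f = (s_0d_0, id, s_0d_1)f$''. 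Hence by Lemma \ref{Lemma Homotopy Equiv Lifting}, $i$ is a homotopy equivalence in $T$, with the two inverses given by $d_1d_3 e$ and $d_0d_0 e$; equivalently, feeding $e$ through the universal property of the pullback defining $T_{hoequiv}$ a second time produces the desired witnessing $2$-cell $\sigma$, so that $witeq(i)$ is defined.

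The main point to get right — and the only real content beyond chasing definitions — is the identification of the commuting pullback square with an instance of the lifting diagram of Lemma \ref{Lemma Homotopy Equiv Lifting}: one must check that the two legs $U$ and $e$ of the pullback are exactly the data ``$f$'' and ``$\tilde f$'' demanded there, i.e. that no twisting of the maps $(s_0d_0, id, s_0d_1)$ and $(d_1d_3, d_0d_3, d_1d_0)$ has crept in. Since $T_{hoequiv}$ was defined by precisely this pullback, this is immediate, and the corollary follows with essentially no computation. I would keep the proof to two or three sentences, invoking Lemma \ref{Lemma Homotopy Equiv Lifting} and Definition \ref{Def Object of Equiv} directly.
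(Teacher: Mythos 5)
Your argument is correct and is essentially the intended one: the paper states this as an immediate corollary, and the content is exactly that the defining pullback square of Definition \ref{Def Object of Equiv} exhibits $e\colon T_{hoequiv}\to T_3$ as a lift of $i=U$ in the sense of Lemma \ref{Lemma Homotopy Equiv Lifting}. An even shorter route, using the subsequent lemma characterizing equivalences as morphisms lifting through $U\colon T_{hoequiv}\to T_1$, is to observe that $U$ lifts through itself via the identity of $T_{hoequiv}$; this is the same observation with one fewer definition unwound.
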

 

 Notice we defined $T_{hoequiv}$ as a pullback which means following lemma holds. 
 \begin{lemone} \label{Lemma Thoequiv universal prop}
  Let $T_{h} \to T_1$ satisfy the property that any morphism $D \to T_1$ is an equivalence if and only if it lifts to $T_h$.
  Then $T_h$ is equivalent to $T_{hoequiv}$.
 \end{lemone}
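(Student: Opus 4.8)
The plan is to show that $T_h\to T_1$ and the structure map $T_{hoequiv}\to T_1$ are literally the same monomorphism into $T_1$ — the one whose sections over a context $D$ are precisely the homotopy equivalences of $T$. Throughout I read ``$D\to T_1$ lifts to $T_h$'' as a \emph{property}, i.e.\ as the assertion that $map_{\C}(D,T_h)\to map_{\C}(D,T_1)$ is a monomorphism of spaces with image the equivalences, equivalently that $T_h\to T_1$ is a monomorphism. This is the reading forced by the word ``property'' in the hypothesis, and it is genuinely needed: with ``lifts'' meaning only ``admits some lift'', $T_h$ would not be pinned down, since e.g.\ $T_{hoequiv}\times S$ for $S$ any nonempty space would also satisfy the displayed biconditional.

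First I would record that $T_{hoequiv}\to T_1$ itself enjoys the hypothesis and is a monomorphism. That a morphism $D\to T_1$ is a homotopy equivalence in $T$ exactly when it lifts along $T_{hoequiv}\to T_1$ (via $witeq$) is the lemma immediately preceding the statement; that $T_{hoequiv}\to T_1$ is a monomorphism follows from Definition~\ref{Def Object of Equiv}, where it is produced as the pullback of $(d_1d_3,d_0d_3,d_1d_0)\colon T_3\to Z^T(3)$ along $(s_0d_0,\mathrm{id},s_0d_1)\colon T_1\to Z^T(3)$ — the whole point of this construction in \cite{Ra18a} being that it cuts out the sub-object of homotopy equivalences inside $T_1$, so that the lift $witeq(f)$ is essentially unique when it exists. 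Hence $T_h\to T_1$ and $T_{hoequiv}\to T_1$ are now two monomorphisms into $T_1$ with the same sections over every $D$.

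Next I would build mutually inverse comparison maps over $T_1$. By the corollary preceding the statement, the structure map $T_{hoequiv}\to T_1$ is a homotopy equivalence in $T$ with context $T_{hoequiv}$, so by the hypothesis for $T_h$ it lifts to a map $\alpha\colon T_{hoequiv}\to T_h$ over $T_1$. Symmetrically, the structure map $T_h\to T_1$ admits the tautological lift $\mathrm{id}_{T_h}$ along $T_h\to T_1$, hence is a homotopy equivalence in $T$ with context $T_h$ by the easy direction of the hypothesis, hence lifts along $T_{hoequiv}\to T_1$ via $\beta\colon T_h\to T_{hoequiv}$ over $T_1$. Now $\beta\alpha$ and $\mathrm{id}_{T_{hoequiv}}$ are both maps $T_{hoequiv}\to T_{hoequiv}$ over $T_1$; since $T_{hoequiv}\to T_1$ is a monomorphism they agree, so $\beta\alpha\simeq\mathrm{id}_{T_{hoequiv}}$, and the identical argument using that $T_h\to T_1$ is a monomorphism gives $\alpha\beta\simeq\mathrm{id}_{T_h}$. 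Therefore $\alpha\colon T_{hoequiv}\xrightarrow{\ \simeq\ }T_h$, which is the claim.

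The hard part is genuinely only the first step: one must commit to the ``property'' reading of the hypothesis and, dually, be certain that $T_{hoequiv}\to T_1$ is a monomorphism (equivalently that the fibres of $(d_1d_3,d_0d_3,d_1d_0)\colon T_3\to Z^T(3)$ over the image of $(s_0d_0,\mathrm{id},s_0d_1)$ are empty or contractible by the Segal condition). After that the argument is the routine ``two sub-objects dominating one another are equal''. If one wishes to avoid monomorphisms altogether, the same conclusion can be phrased through the Yoneda lemma for complete Segal spaces: both structure maps classify the same sub-presheaf $D\mapsto\{\text{homotopy equivalences in }map_{\C}(D,T_1)\}$ of $map_{\C}(-,T_1)$, and are therefore canonically equivalent over $T_1$.
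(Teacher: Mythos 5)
The paper gives no proof of this lemma at all, so there is nothing to compare your argument against; the author evidently treats it as formal. Judged on its own terms, your skeleton is the right one: produce $\alpha\colon T_{hoequiv}\to T_h$ and $\beta\colon T_h\to T_{hoequiv}$ over $T_1$ from the two lifting properties (using the corollary that $U\colon T_{hoequiv}\to T_1$ is itself a homotopy equivalence in $T$, and Lemma \ref{Lemma Homotopy Equiv Lifting}), then cancel the two composites against the identities. Your observation that the statement is false under the naive ``some lift exists'' reading (e.g.\ $T_{hoequiv}\times S$ also satisfies the biconditional) is genuine and worth recording, although the word ``property'' in the statement plainly modifies the whole biconditional, not the word ``lifts'', so the reading you adopt is a repair of the lemma rather than something forced by its wording.

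The genuine gap is exactly at the step you flag as the hard part: to get $\beta\alpha\simeq \mathrm{id}_{T_{hoequiv}}$ you need $U\colon T_{hoequiv}\to T_1$ to be $(-1)$-truncated, and you assert this rather than prove it. The justification offered --- that it ``follows from Definition \ref{Def Object of Equiv}, where it is produced as a pullback'' --- is not valid: a pullback of a non-monomorphism need not be a monomorphism, and $(d_1d_3,d_0d_3,d_1d_0)\colon T_3\to Z^T(3)$ is certainly not one. The parenthetical claim that its fibres over the image of $(s_0d_0,\mathrm{id},s_0d_1)$ are empty or contractible is precisely the nontrivial content (Rezk's ``the space of inverse data of a morphism is empty or contractible''), and it needs the Segal condition plus an actual argument. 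Worse, in the paper the $(-1)$-truncatedness of $U$ is a separate lemma proved immediately \emph{after} this one, and its proof invokes Lemma \ref{Lemma Thoequiv universal prop}; so as written your argument is circular relative to the paper's development. Without any truncatedness of $U$ your construction still yields $\alpha\beta\simeq\mathrm{id}_{T_h}$, i.e.\ that $T_h$ is a retract of $T_{hoequiv}$ over $T_1$, but not the converse composite. To close the gap you must give an independent proof that $U$ is $(-1)$-truncated (the same issue reappears if you instead route everything through the Yoneda formulation, since $T_{hoequiv}$ classifying a \emph{sub}-presheaf of $map_{\C}(-,T_1)$ is again that statement).
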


 There is also the following helpful lemma to determine whether a map is an equivalence.
 
 \begin{lemone}
  The morphism $f:D \to T_1$ is homotopy equivalence in $T$ if and only if every precomposition $fg:E \to T_1$ is a homotopy equivalence in $T$.
 \end{lemone}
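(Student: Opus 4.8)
The plan is to derive both implications from the lifting characterization of homotopy equivalences in Lemma~\ref{Lemma Homotopy Equiv Lifting}, using that the diagrams defining a homotopy equivalence are natural in the context object. The ``if'' direction needs no work: taking $g = id_D : D \to D$ exhibits $f = f \circ id_D$ as a precomposition of itself, which is a homotopy equivalence by hypothesis.

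For the ``only if'' direction, assume $f : D \to T_1$ is a homotopy equivalence in $T$ and let $g : E \to D$ be an arbitrary map in $\C$. By Lemma~\ref{Lemma Homotopy Equiv Lifting} there is a $2$-cell $\sigma$ and a lift $\tilde f : D \to T_3$ with
$$(d_1 d_3, d_0 d_3, d_1 d_0) \circ \tilde f \simeq (s_0 d_0, id_{T_1}, s_0 d_1) \circ f.$$
I would then precompose the whole configuration with $g$: the map $\tilde f \circ g : E \to T_3$, together with $\sigma$ whiskered by $g$, yields
$$(d_1 d_3, d_0 d_3, d_1 d_0) \circ (\tilde f \circ g) \simeq (s_0 d_0, id_{T_1}, s_0 d_1) \circ (f \circ g),$$
which is exactly a lift, now with context $E$, of the diagram in Lemma~\ref{Lemma Homotopy Equiv Lifting} for the morphism $fg = f \circ g$. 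Hence $fg$ is a homotopy equivalence in $T$. Equivalently, one may run the argument through the object of equivalences $T_{hoequiv}$: a lift $witeq(f) : D \to T_{hoequiv}$ of $f$ along $U$ produces, upon precomposition with $g$, a lift $witeq(f) \circ g : E \to T_{hoequiv}$ of $fg$, since $U \circ (witeq(f) \circ g) \simeq f \circ g$.

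The only point that deserves an explicit (and routine) remark is the naturality of the defining diagram in the context: the maps $(d_1 d_3, d_0 d_3, d_1 d_0)$, $(s_0 d_0, id_{T_1}, s_0 d_1)$ and $U$ are fixed morphisms of $\C$ independent of the context, so precomposing a triangle that commutes up to a $2$-cell over them with a context change $g : E \to D$ again produces such a triangle. I do not expect any genuine obstacle here; the substance of the lemma is simply that ``being a homotopy equivalence'' is a condition of the form ``admits a lift against a fixed map'', and such conditions are automatically stable under precomposition.
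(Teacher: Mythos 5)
Your proposal is correct and matches the paper's own argument: the paper likewise handles the ``if'' direction by taking $g = id_D$, and the ``only if'' direction by observing that a factorization of $f$ through $T_{hoequiv}$ precomposes with $g$ to give a factorization of $fg$. Your additional remarks on naturality of the fixed comparison maps are accurate but not needed beyond what the paper records.
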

 
 \begin{proof}
  If any precomposition is a equivalence then so in particular is $f id_D = f$. If $f$ is a homotopy equivalence then $f$ factors with codomain
  $T_{hoequiv}$ and so does every precomposition.
 \end{proof}
 
%
 
 Up until now we have defined an equivalence in terms of certain liftings. One question that comes up is the issue of uniqueness.
 Can a homotopy equivalence have several lifts to $T_{hoequiv}$? In the case of Segal spaces this is clearly not possible as we define 
 $T_{hoequiv}$ as a subspace of $T_1$ and so the map between them is injective. 
 As our definition of $T_{hoequiv}$ differs we have to confirm that such liftings are unique in the correct sense.
 \begin{lemone}
  The map $U: T_{hoequiv} \to T_1$ is (-1)-truncated
 \end{lemone}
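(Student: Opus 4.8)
The statement to prove is that $U\colon T_{hoequiv} \to T_1$ is $(-1)$-truncated, i.e. a monomorphism in the CSS $\C$. The plan is to use the defining pullback square of $T_{hoequiv}$ from Definition \ref{Def Object of Equiv} together with the stability of $(-1)$-truncated maps under pullback: a pullback of a $(-1)$-truncated map is again $(-1)$-truncated. So it suffices to show that the map $(d_1d_3, d_0d_3, d_1d_0)\colon T_3 \to Z^T(3)$ is $(-1)$-truncated, since $U$ is the pullback of this map along $(s_0d_0, id, s_0d_1)\colon T_1 \to Z^T(3)$.

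First I would reduce the analysis of $T_3 \to Z^T(3)$ using the Segal condition. As noted in the proof of Lemma \ref{Lemma Homotopy Equiv Lifting}, the map $T_3 \to Z^T(3)$ factors through the equivalence $(d_3,d_0)\colon T_3 \xrightarrow{\simeq} T_2 \times_{T_1} T_2$, so it is enough to show that the induced map $T_2 \times_{T_1} T_2 \to Z^T(3)$ is $(-1)$-truncated. Unwinding the definitions of $Z^T(3)$ (Definition \ref{Def Z3}) and of the three structure maps on $T_3$, this map sends a compatible pair of $2$-simplices to the triple of edges $(d_1, d_0\text{ of first}, d_1\text{ of second})$ laid out along the chain $T_1 \to T_0 \leftarrow T_1 \to T_0 \leftarrow T_1$. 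So the content is: knowing the outer edges $(d_1d_3, d_0d_3)$ of the composable pair and the middle edge $d_1d_0$, the pair of $2$-simplices is determined up to a contractible space of choices.

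The key step is therefore to identify $T_2 \times_{T_1} T_2$ with a suitable pullback over $Z^T(3)$. I would argue that $T_2 \times_{T_1} T_2 \to Z^T(3)$ is itself (equivalent to) a pullback of the map $T_2 \to T_1 \times_{T_0} T_1$ (obtained from the two outer edges of a $2$-simplex), applied twice and glued; and that this map $T_2 \to T_1 \times_{T_0} T_1$ is $(-1)$-truncated. The latter is where the real input lies: in a Segal \emph{space} one would say the space of fillers of a horn $\Lambda^2 \to X$ is contractible, hence $X_2 \to X_1 \times_{X_0} X_1$ (projecting to the $d_2, d_0$ edges, or equivalently the spine) is a trivial fibration and in particular $(-1)$-truncated — but here I need the internal analogue. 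I would get this from the Segal condition $T_2 \xrightarrow{\simeq} T_1 \times_{T_0} T_1$ (Definition \ref{Def Segal Obj}): the spine map $(\alpha_0,\alpha_1)$ realizing this equivalence is precisely the map to $T_1 \times_{T_0} T_1$ built from the edges $01$ and $12$, and an equivalence is in particular $(-1)$-truncated. Combining: $T_3 \xrightarrow{\simeq} T_1 \times_{T_0} T_1 \times_{T_0} T_1$ by the Segal condition, and under this identification the map to $Z^T(3)$ becomes the identity-like comparison to a product/pullback over the $T_0$'s — which is an equivalence, hence $(-1)$-truncated. Pulling back along $T_1 \to Z^T(3)$ then gives that $U$ is $(-1)$-truncated.

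The main obstacle I anticipate is the bookkeeping in matching up the simplicial operators: verifying that under the Segal equivalence $T_3 \simeq T_1 \times_{T_0} T_1 \times_{T_0} T_1$ the composite $(d_1d_3, d_0d_3, d_1d_0)$ really does become (equivalent to) the projection to an iterated pullback over $T_0$, so that the map $T_3 \to Z^T(3)$ is genuinely an equivalence rather than merely $(-1)$-truncated. One has to check this on the nose using the cosimplicial identities, tracking which edges of the $3$-simplex the faces $d_1d_3$, $d_0d_3$, $d_1d_0$ pick out and confirming they assemble into the diagram defining $Z^T(3)$. Once that identification is in place, the truncatedness is immediate from stability of $(-1)$-truncated maps under pullback, applied to the pullback square of Definition \ref{Def Object of Equiv}.
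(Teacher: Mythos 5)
Your reduction---pull back the $(-1)$-truncatedness of $(d_1d_3,d_0d_3,d_1d_0)\colon T_3 \to Z^T(3)$ along $(s_0d_0,id,s_0d_1)\colon T_1 \to Z^T(3)$---would be sound if that map were $(-1)$-truncated, but it is not, and in particular it is not an equivalence. The ``bookkeeping'' you flag as the main obstacle is precisely where the argument fails: the faces $d_1d_3$, $d_0d_3$, $d_1d_0$ pick out the edges $02$, $12$, $13$ of a $3$-simplex, not the spine $01$, $12$, $23$, and $Z^T(3)$ is the limit of $T_1 \xrightarrow{s} T_0 \xleftarrow{s} T_1 \xrightarrow{t} T_0 \xleftarrow{t} T_1$ (Definition \ref{Def Z3}), with sources glued to sources and targets to targets---not the alternating Segal pullback $T_1\times_{T_0}T_1\times_{T_0}T_1$. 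Concretely, after identifying $T_3 \simeq T_2\times_{T_1}T_2$ by the Segal condition, the fibre of $T_3 \to Z^T(3)$ over $(h_1,f,h_2)$ is the space of pairs of triangles exhibiting $f\circ g_1 \simeq h_1$ and $g_3\circ f \simeq h_2$ for some $g_1,g_3$, which need not be empty or contractible: for $T$ the nerve of the one-object category with endomorphism monoid $(\{0,1\},\times)$, viewed as a Segal object in $\set$, the fibre over $(0,0,0)$ has four points. A quick sanity check that your intermediate claim cannot hold: if $T_3 \to Z^T(3)$ were an equivalence for every Segal object, its pullback $U$ would always be an equivalence, so every morphism of every Segal object would be a homotopy equivalence, and the completeness condition (which asks exactly that $T_0 \to T_1\times_{Z^T(3)}T_3 = T_{hoequiv}$ be an equivalence) would collapse.

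The paper's proof takes a different route that never analyzes $T_3 \to Z^T(3)$ directly: it reformulates $(-1)$-truncatedness of $U$ as the diagonal $T_{hoequiv} \to T_{hoequiv}\times_{T_1}T_{hoequiv}$ being an equivalence, and then shows that $T_{hoequiv}\times_{T_1}T_{hoequiv} \to T_1$ satisfies the same lifting characterization of homotopy equivalences as $U$ itself (a map $D \to T_1$ lifts to the fibre product iff it lifts to each factor iff it is a homotopy equivalence), invoking Lemma \ref{Lemma Thoequiv universal prop}. If you want to rescue a pullback-style argument, note that it is only \emph{after} base change along $T_1 \to Z^T(3)$---which forces the $02$ and $13$ edges to be identities---that the relevant fibres become empty or contractible; that statement is essentially the uniqueness of inverses (Corollary \ref{Cor Uniqueness of inverse lifts}), i.e.\ it is the content of the lemma rather than a formal consequence of stability of $(-1)$-truncated maps under pullback.
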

 \begin{proof}
  The map is ($-1$)-truncated if and only if in the following pullback square
  \begin{center}
   \pbsq{T_{hoequiv} \underset{T_1}{\times} T_{hoequiv}}{T_{hoequiv}}{T_{hoequiv}}{T_1}{\pi_1}{\pi_2}{U}{U}
  \end{center}
 $T_{hoequiv} \times_{T_1} T_{hoequiv}$ is equivalent to $T_{hoequiv}$.
 We will prove that by showing that the map $U\pi_1 : T_{hoequiv} \times_{T_1} T_{hoequiv} \to T_1$ satisfies the universal property described in
 Lemma \ref{Lemma Thoequiv universal prop}. 
 \par 
 A map $f: D \to T_1$ is an equivalence if and only if we can complete following diagram 
 \begin{center}
  \begin{tikzcd}[row sep=0.5in, column sep=0.7in]
   & T_{hoequiv} \arrow[d, "U"] \\
   D \arrow[r, "f"] \arrow[dr, "witeq(f)"', dashed] \arrow[ur, "witeq(f)", dashed] & T_1 \\
   & T_{hoequiv} \arrow[u, "U"']
  \end{tikzcd}
 \end{center}
 which is exactly the data of a map $D \to T_{hoequiv} \times_{T_1} T_{hoequiv}$. Thus $f: D \to T_1$ is an equivalence if and only if 
 it lifts to a map $(witeq(f), witeq(f)):D \to T_{hoequiv} \times_{T_1} T_{hoequiv}$, which means we are done.


 \end{proof}
 
 The lemma above has following important corollary.
 
 \begin{corone} \label{Cor Uniqueness of inverse lifts}
  For any map $f: D \to T_1$, the internal mapping object $map_{/T_1}(D, T_{hoequiv})$ is either empty or contractible.
 \end{corone}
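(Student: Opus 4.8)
The preceding lemma shows that the structure map $U\colon T_{hoequiv}\to T_1$ is $(-1)$-truncated, i.e.\ that its relative diagonal $T_{hoequiv}\to T_{hoequiv}\underset{T_1}{\times}T_{hoequiv}$ is an equivalence in $\C$. Recall also that, by the same recipe that produces the mapping objects $map_T(x,y)=\prod_D(x,y)^*T_1$, the internal mapping object in the statement is the object
$$map_{/T_1}(D,T_{hoequiv})=\prod_D f^*T_{hoequiv}$$
of $\C$, where $f\colon D\to T_1$ is the given map and $f^*T_{hoequiv}\to D$ is the pullback of $U$ along $f$. So the phrase ``empty or contractible'' is exactly the assertion that this object of $\C$ is $(-1)$-truncated (i.e.\ subterminal), and that is what I would prove.

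The plan is to run this in two formal steps. First, $(-1)$-truncatedness of maps is stable under pullback, so $f^*U\colon f^*T_{hoequiv}\to D$ is again $(-1)$-truncated. Second, $\prod_D\colon\C_{/D}\to\C$ is a right adjoint (Definition \ref{Def Dep Product}) and hence preserves finite limits; in particular it sends $\mathrm{id}_D$ — the terminal object of $\C_{/D}$ — to the terminal object $*$ of $\C$, and, since it preserves pullbacks and hence products in the slice, it carries the relative diagonal of $f^*U$ to the diagonal of $\prod_D f^*T_{hoequiv}$. As a map is $(-1)$-truncated precisely when its relative diagonal is an equivalence, it follows that $\prod_D$ sends $(-1)$-truncated maps to $(-1)$-truncated maps; applied to $f^*U$ this gives that $map_{/T_1}(D,T_{hoequiv})=\prod_D f^*T_{hoequiv}\to\prod_D\mathrm{id}_D=*$ is $(-1)$-truncated, so $map_{/T_1}(D,T_{hoequiv})$ is subterminal. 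Conceptually this is the internal counterpart of the fact that in a Segal space $T_{hoequiv}$ is literally a subspace of $T_1$.

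The only step deserving real attention is the claim that the dependent product preserves $(-1)$-truncatedness, but as indicated it reduces entirely to the formal observation that a right adjoint preserves the pullbacks (the relative diagonals) witnessing truncatedness, so I expect no genuine obstacle. If one prefers to argue with mapping spaces instead of the internal object, the same idea works: for every $Z\in\C$ there is an equivalence $map_{\C}(Z,map_{/T_1}(D,T_{hoequiv}))\simeq map_{\C_{/T_1}}(Z\times D,T_{hoequiv})$, and because $U$ is $(-1)$-truncated the space $S:=map_{\C_{/T_1}}(Z\times D,T_{hoequiv})$ has invertible diagonal $S\to S\times S$, which forces $S$ to be empty or contractible; letting $Z$ range over $\C$ then yields the conclusion.
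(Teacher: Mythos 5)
Your proof is correct and is exactly the argument the paper leaves implicit: the corollary is stated without proof, but the author's intended route is visibly the same one (compare the remark after Definition \ref{Def Hoequiv xy}, which invokes ``right adjoints preserve truncation levels'' to push the $(-1)$-truncatedness of $U$ through $\prod_D$). Your identification of $map_{/T_1}(D,T_{hoequiv})$ with $\prod_D f^*T_{hoequiv}$ and the two-step argument (stability of $(-1)$-truncatedness under pullback, then under the limit-preserving right adjoint $\prod_D$) supply precisely the missing details, so there is nothing to object to.
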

 
 Thus either a morphism $f: D \to T_1$ is not an equivalence and thus has no lift, or it is an equivalence and the space of such lifts is contractible.
 This is telling us that if a morphism has inverses then they are determined uniquely (up to homotopy) by the morphism itself.
 

 For two objects $x,y: D \to T_0$, we defined $map_T(x,y)$ as the mapping object which contains the data of maps from $x$ to $y$.
 We now want to repeat the same procedure for homotopy equivalences.
 
 \begin{defone} \label{Def Hoequiv xy}
 Let $\C$ be locally Cartesian closed.
  Let $x,y : D \to T_0$ be two objects, we define the {\it object of equivalences} $hoequiv_T(x,y)$ as
   $$hoequiv_T(x,y) = \prod_D (x,y)^*T_{hoequiv}$$
   This can be captured in following diagram.
  \begin{center}
    \begin{tikzcd}[row sep=0.5in, column sep=0.5in]
     hoequiv_T(x,y) \arrow[d] & (x,y)^*T_{hoequiv} \arrow[d] \arrow[l, "\prod_D"'] \arrow[r] & T_{hoequiv} \arrow[d, "(s \comma t)U"] \\
     * & D \arrow[l] \arrow[r, "( x \comma y)"] & T_0 \times T_0
    \end{tikzcd}
   \end{center}   
   It comes with an evaluation map 
   $$ev: hoequiv_T(x,y) \times D \to T_{hoequiv}$$
 \end{defone}
 
 \begin{remone}
  The map $U:T_{hoequiv} \to T_1$ induced a map $U(x,y):hoequiv_T(x,y) \to map_T(x,y)$. As right adjoints preserve truncation levels, this map is still 
  ($-1$)-truncated.
 \end{remone}

%
%

 \subsection{Definition of a Complete Segal Object} \label{Definition of a Complete Segal Object}
 
 As of now we have discussed homotopy equivalence in Segal objects.
 However, by doing so we have created a possible ambiguity. 
 Two objects $x,y: D \to T_0$ can now be equivalent in more than one way. 
 First, we can have an equivalence (in the ambient category $\C$) $h: D \to D$ such that $yh$ is equivalent to $x$.
 Second, the object of equivalences $hoequiv_T(x,y)$ can be non-trivial (in the sense that a lift to $T_{hoequiv}$ exists).
 \par 
 We want to make sure that these two conditions do coincide. 
 The way to achieve this is via the completeness condition.
 
 \begin{defone}
  We say a Segal object $W$ is complete Segal object if the map $s_0: W_0 \to W_{hoequiv}$ is an equivalence in $\C$.
 \end{defone}

 
 Given the definition of $W_{hoequiv}$ we have following equivalent way of defining a complete Segal object.
 
 \begin{lemone}
  A Segal object $W$ is complete if and only if the following is a pullback square in $\C$.
 \begin{center}
    \pbsq{W_0}{W_3}{W_1}{Z^W(3)}{}{}{}{}
   \end{center}
 \end{lemone}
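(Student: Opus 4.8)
The plan is to unravel both definitions of completeness and show they coincide by manipulating the pullback square defining $W_{hoequiv}$ (Definition \ref{Def Object of Equiv}). Recall that $W_{hoequiv}$ is the pullback of $W_3 \to Z^W(3) \leftarrow W_1$ along the maps $(d_1d_3, d_0d_3, d_1d_0)$ and $(s_0d_0, id, s_0d_1)$. The Segal object $W$ is complete precisely when $s_0 : W_0 \to W_{hoequiv}$ is an equivalence in $\C$.

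First I would form the composite square: stack the square
\begin{center}
 \comsq{W_0}{W_{hoequiv}}{W_0}{W_1}{s_0}{}{U}{s_0}
\end{center}
on top of the defining pullback of $W_{hoequiv}$, noting that the outer composite along the bottom is $(s_0d_0, id, s_0d_1) \circ s_0 = (s_0 s_0, s_0, s_0 s_0) : W_0 \to Z^W(3)$ by the simplicial identities, and along the top-right it is the unique degeneracy $W_0 \to W_3$ followed by $(d_1d_3, d_0d_3, d_1d_0)$. Since pasting of pullback squares is a pullback, $s_0 : W_0 \to W_{hoequiv}$ is an equivalence if and only if the outer square
\begin{center}
 \comsq{W_0}{W_3}{W_1}{Z^W(3)}{s_0^{(3)}}{s_0}{(d_1d_3 \comma d_0d_3 \comma d_1d_0)}{(s_0d_0 \comma id \comma s_0d_1)}
\end{center}
is a pullback square, where $s_0^{(3)}$ is the iterated degeneracy $W_0 \to W_3$. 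This is exactly the square in the statement. The forward direction uses that $W_{hoequiv} \to W_1$ is $(-1)$-truncated (so being an equivalence is detected on the composite), and the backward direction uses the pasting lemma directly together with the fact that the left-hand square above has a section.

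The main obstacle I anticipate is bookkeeping: carefully verifying that the two composite legs $W_0 \to Z^W(3)$ agree — i.e. that $(s_0d_0, id, s_0d_1) s_0$ really equals the map induced on $Z^W(3)$ by the degeneracy $W_0 \to W_3$ under $(d_1d_3, d_0d_3, d_1d_0)$ — which amounts to checking three identities in $\Delta$ of the form $d_1 d_3 s_0 s_0 = \mathrm{id}$, etc. One also has to confirm that the top square is genuinely a pullback, which follows because $s_0 : W_0 \to W_1$ is a split monomorphism (it has a retraction $d_0$ or $d_1$) and $W_0 \cong W_1 \times_{Z^W(3)} W_0$ in the appropriate sense; alternatively, and more cleanly, one invokes the two-out-of-three / pasting lemma stated so that the outer square is a pullback iff the top square is, given the bottom one is. I would close the argument by remarking that this pullback square is the promised reformulation and that it recovers, in the ambient CSS $\C$, the classical completeness square of Definition \ref{Def Complete Segal Spaces}(2) when $\C = \CSS$.
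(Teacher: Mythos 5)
Your strategy is correct and is essentially the only available one; in fact the paper gives no proof at all, treating the lemma as immediate from Definition \ref{Def Object of Equiv}: since $W_{hoequiv}$ is by definition the pullback $W_3\times_{Z^W(3)}W_1$, the square in the statement is a pullback precisely when the comparison map $W_0\to W_3\times_{Z^W(3)}W_1=W_{hoequiv}$ induced by the degeneracies is an equivalence, and that comparison map is exactly the map $s_0\colon W_0\to W_{hoequiv}$ used to define completeness. Two slips in your write-up should be corrected. First, the auxiliary square you paste onto the defining pullback is written with $W_0$ in the lower-left corner, the identity on the left, and $s_0$ along the bottom; pasting that onto the defining pullback of $W_{hoequiv}$ produces an outer square with $W_0$, not $W_1$, in the lower-left, which is not the square of the statement. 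The square you actually need has top edge $W_0\to W_{hoequiv}$, left edge $s_0\colon W_0\to W_1$, right edge $U$, and the \emph{identity} of $W_1$ along the bottom; a square whose bottom edge is an identity is a pullback if and only if its top edge is an equivalence, and the pasting lemma then gives both implications in one stroke. Second, the appeals to $U$ being $(-1)$-truncated and to $s_0\colon W_0\to W_1$ being a split monomorphism are unnecessary detours (the former is only established later in the paper as a separate lemma and plays no role here). The one genuinely content-bearing verification is the one you flag: the simplicial identities showing that the outer square commutes and that its comparison map into $W_{hoequiv}$ is the paper's $s_0$, e.g.\ $d_1d_3\sigma = s_0 = s_0d_0s_0$ as maps $W_0\to W_1$, where $\sigma$ denotes the total degeneracy $W_0\to W_3$.
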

 
 Similar to Segal objects, complete Segal objects also have their own category theory.
 
 \begin{defone} \label{Def CSS of CSO}
  We define $CSO(\C)$ as the full subcategory of $\C^{\Delta^{op}}$ generated by complete Segal objects.
 \end{defone}

 Up to here we have defined a complete Segal object as something that plays the role of an ``internal higher category".
 Our next goal should be to develop standard categorical tools for complete Segal objects.
 In particular, we want to be able to talk about limits and colimits or adjunctions in a complete Segal object.
 However, such discussions can become quite difficult as the ambient category $\C$ might lack many pleasant properties that we
 can find in the category of spaces. 
 \par
 One approach is to impose enough conditions on $\C$ to get the desired result.
 For this approach see \cite{RS17}, where
 Riehl and Shulman use techniques from type theory to be able to do internal higher category theory. 
 In particular, they define an internal version of the free arrow $F(1)$, which allows them to do many important categorical constructions.
 \par 
 Another approach is to embed complete Segal objects in a larger environment that allows us to do categorical constructions externally.
 The key motivation here is the Yoneda lemma, which shows that we can embed any higher category in the category of presheaves, 
 or in a higher category that models presheaves, namely right fibrations. The category of right fibrations has many pleasant properties
 and in particular has a model structure, which gives us concrete ways to do many constructions.
 The goal is to generalize the statement above to complete Segal objects. We embed complete Segal objects in a generalization of 
 right fibrations, namely Cartesian fibrations and use the model structure on Cartesian fibrations to develop
 the category theory of complete Segal objects.
 
\section{Representable Cartesian Fibrations} \label{Sec Representable Cartesian Fibrations}
 In classical category theory we have the Yoneda embedding 
 $$ \mathscr{Y}: \mathscr{D} \to Fun(\mathscr{D}^{op}, \set)$$
 which embeds a category $\mathscr{D}$ in the category of presheaves. This uses the fact that every object $d$ gives rise to a 
 {\it representable presheaf} $Hom_{\mathscr{D}}(-,d)$.
 In higher category theory presheaves can be quite complicated because
 of functoriality issues and so we use right fibrations to model presheaves valued in spaces.
 Using the same Yoneda lemma argument for every object in a higher category we get a {\it representable right fibration}.
 \par 
 We want to use the same argument for complete Segal objects. However, a complete Segal object has more information than
 just one object and so it will give us a presheaf valued in higher categories, rather than spaces.
 Presheaves valued in higher categories are modeled by {\it Cartesian fibrations}. 
 Thus our goal is to understand Cartesian fibrations built out of complete Segal objects,
 so called {\it representable Cartesian fibrations}.
 \par 
 The theory of Cartesian fibrations for complete Segal spaces is carefully studied in \cite{Ra17b} and will serve as our main reference
 in this section.
 Thus, in the first subsection we review the most important results that we need later on.
 Then we use the results in the coming subsections to study representable Cartesian fibrations
 which then can be used to learn more about complete Segal objects.
 
 
 \begin{remone}
  For this section $\C$ is a CSS with finite limits. Most definitions in this section hold in a more general setting (see \cite{Ra17b}),
  however, we will focus on the case we need. 
 \end{remone}

 \subsection{Cartesian Fibrations} \label{Subsec Cartesian Fibrations}
 In this subsection we review the important definitions with regard to Cartesian fibrations that we will need to study 
 complete Segal objects. For more details see \cite{Ra17a} and \cite{Ra17b}.
 
 \begin{defone} \label{Def Left Fibration}
 \cite[Definition 3.1]{Ra17a}
 A map $p: \L \to \C$ is called {\it left fibration} if it is a Reedy fibration and the following is a 
 homotopy pullback square:
 \begin{center}
  \pbsq{\L^{F(1)}}{\L}{\C^{F(1)}}{\C}{s}{p^{F(1)}}{p}{s}
 \end{center}
\end{defone}
 
 Left fibrations come with a model structure.
  \begin{theone} \label{The Covariant Model Structure}
  \cite[Theorem 3.14]{Ra17a}
  There is a unique model structure on the category $s\s_{/\C}$, , called the covariant model structure
  and denoted by $(s\s_{/\C})^{cov}$, which satisfies the following conditions:
  \begin{enumerate}
   \item It is a simplicial model category
   \item The fibrant objects are the left fibrations over $\C$
   \item Cofibrations are monomorphisms
   \item A map $f: A \to B$ over $\C$ is a weak equivalence if 
   $$map_{s\s_{/\C}}(B,\L) \to map_{s\s_{/\C}}(A,\L)$$ 
   is an equivalence for every left fibration $\L \to X$.
   \item A weak equivalence (covariant fibration) between fibrant objects is a level-wise equivalence (Reedy fibration). 
  \end{enumerate}
 \end{theone}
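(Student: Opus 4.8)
This is \cite[Theorem 3.14]{Ra17a}; for completeness I sketch one route to it. The plan is to obtain $(s\s_{/\C})^{cov}$ as a left Bousfield localization of a slice of the Reedy model structure and then to identify its fibrant objects with the left fibrations.

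First I would take the Reedy model structure on $s\s$, which is simplicial, combinatorial and left proper, and pass to the overcategory model structure on $s\s_{/\C}$; since $\C$ is Reedy fibrant this slice is again simplicial, combinatorial and left proper, with cofibrations the monomorphisms, weak equivalences the Reedy equivalences, and fibrations the Reedy fibrations. Next I would fix a set $S$ of maps of $s\s_{/\C}$, built from the inclusion $d^1 \colon F(0) \hookrightarrow F(1)$ (tensored with the boundary inclusions $\partial\Delta[l] \hookrightarrow \Delta[l]$ and mapped into $\C$ along the various $F(1) \to \C$, which form a set since $s\s$ is generated by the $F(n)\times\Delta[l]$), chosen precisely so as to encode the homotopy pullback square of Definition \ref{Def Left Fibration}. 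Left Bousfield localization at $S$ — available because the slice is left proper and combinatorial — yields a simplicial, combinatorial, left proper model structure on $s\s_{/\C}$ whose cofibrations are unchanged, giving (1) and (3), and whose fibrant objects are the $S$-local Reedy fibrant objects.

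The crucial step is to show that a Reedy fibrant $p \colon \L \to \C$ is $S$-local precisely when it is a left fibration, which gives (2). I would establish this by computing derived mapping spaces: for $A \to \C$ the space $map_{s\s_{/\C}}(A,\L)$ is a homotopy fibre of $map_{s\s}(A,\L) \to map_{s\s}(A,\C)$, and letting $A$ run through $F(0)\times\Delta[l]$ and $F(1)\times\Delta[l]$ reassembles these into the cotensors $\L^{F(1)}$, $\C^{F(1)}$ and into $\L$, $\C$; unwinding, $S$-locality says exactly that $\L^{F(1)} \to \L \times_{\C} \C^{F(1)}$ is an equivalence, i.e. that the square in Definition \ref{Def Left Fibration} is a homotopy pullback. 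Matching these two descriptions — handling the exponentials in $s\s$ and the interplay between mapping spaces in the slice and the levels of the simplicial spaces — is the part I expect to be the main obstacle.

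The remainder is formal. Condition (4) is the standard identification of the $S$-local (equivalently, covariant) weak equivalences with the maps inducing equivalences on $map_{s\s_{/\C}}(-,\L)$ for every fibrant, hence left-fibration, $\L \to \C$. For (5), in any left Bousfield localization the weak equivalences between fibrant objects agree with those of the underlying model structure, and likewise for fibrations between fibrant objects; since Reedy equivalences of simplicial spaces are precisely the level-wise equivalences, this yields that a covariant equivalence between left fibrations is a level-wise equivalence and a covariant fibration between them is a Reedy fibration. Finally, uniqueness follows from the fact that a model structure is determined by its cofibrations together with its fibrant objects.
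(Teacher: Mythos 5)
This theorem is not proved in the paper at all; it is imported verbatim from \cite[Theorem 3.14]{Ra17a}, where it is established exactly as you sketch, by left Bousfield localization of the sliced Reedy model structure at the initial-vertex inclusions over $\C$ and an identification of the local objects with the left fibrations. Your outline matches that construction, and you correctly isolate the one genuinely technical step (translating $S$-locality, stated via mapping spaces out of $F(0)\to F(1)$ over $\C$, into the homotopy pullback condition on $\L^{F(1)} \to \L \times_{\C} \C^{F(1)}$ of Definition \ref{Def Left Fibration}).
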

  
  Note that the definition is not symmetric and so we have following definition.
   \begin{defone} \label{Def Right Fibration}
    \cite[Definition 3.21]{Ra17a}
  A map $p: \R \to \C$ is called {\it right fibration} if it is a Reedy fibration and the following is a 
  homotopy pullback square:
  \begin{center}
   \pbsq{\R^{F(1)}}{\R}{\C^{F(1)}}{\C}{t}{p^{F(1)}}{p}{t}
  \end{center}
  \end{defone}
  \begin{remone}
    Similar to the previous case this fibration comes with its own model structure, which is called the {\it contravariant model structure}.
  \end{remone}

%
%
  For more details on left fibrations and it's relevant properties see \cite[Chapter 3]{Ra17a}.

  Left fibrations model maps into spaces.
  The next step is it to generalize everything to the level of presheaves valued in higher categories.
  However, before we can do so we have to expand our playing field.
  
 \begin{defone} \label{Def Bisimplicial Spaces}
  Let $ss\s$ be the category with objects bisimplicial spaces. 
 \end{defone}
 
 The category of bisimplicial spaces has its own version of Reedy model structure, which we call the biReedy model structure.
 
 \begin{theone} \label{The BiReedy Model Structure}
  There is a model structure on $ss\s$, called the biReedy model structure, defined as follows:
  \begin{itemize}
   \item[W] A weak equivalence is a level-wise Reedy equivalence of simplicial spaces.
   \item[C] A cofibration is an inclusion.
   \item[F] A fibration is a map that satisfies the right lifting property with respect to trivial cofibrations.
  \end{itemize}
 \end{theone}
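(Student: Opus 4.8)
Theorem \ref{The BiReedy Model Structure} establishes the existence of the biReedy model structure on $ss\s$.

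The approach is to recognize the biReedy model structure as an instance of the Reedy model structure construction applied twice, using the fact that $ss\s = \Fun(\Delta^{op}, s\s)$ can be built up as a Reedy diagram category over $s\s$, which itself carries the Reedy model structure (Definition/Theorem \ref{The Segal Space Model Structure} describes a localization of it, but the underlying Reedy structure is standard). Since $\Delta^{op}$ is a Reedy category and $s\s$ is a model category, the general Reedy construction produces a model structure on $\Fun(\Delta^{op}, s\s)$ whose weak equivalences are the level-wise weak equivalences, whose cofibrations are the maps whose latching maps are cofibrations, and whose fibrations are the maps whose matching maps are fibrations. Iterating this once more from $\s$ (so $s\s = \Fun(\Delta^{op},\s)$ with its Reedy structure, and then $ss\s = \Fun(\Delta^{op}, s\s)$) gives exactly the combinatorial description asserted.

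First I would recall the general theorem that for a Reedy category $\mathscr{R}$ and a model category $\M$, the functor category $\M^{\mathscr{R}}$ admits the Reedy model structure, and cite the standard reference (Hirschhorn or Hovey). Then I would apply this with $\mathscr{R} = \Delta^{op}$ and $\M = s\s$ equipped with the Reedy model structure on simplicial spaces (which is itself the Reedy structure on $\s^{\Delta^{op}}$). The main task is to check that the abstractly-produced classes match the stated ones: (W) level-wise Reedy equivalences of simplicial spaces is immediate since Reedy weak equivalences are by definition level-wise; (C) one must verify that ``latching maps are cofibrations in $s\s$'' unwinds to ``inclusion'' of bisimplicial spaces, which follows because cofibrations in the Reedy structure on $s\s$ are exactly the monomorphisms (Theorem \ref{The Segal Space Model Structure}(1) records this for the Segal localization, but it already holds Reedy-level), and a map of diagrams all of whose latching maps are monomorphisms is itself a level-wise monomorphism, hence an inclusion; (F) fibrations are characterized by the right lifting property against trivial cofibrations in any model category, so the stated description of (F) is automatic once (W) and (C) are pinned down.

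The main obstacle — really the only nontrivial point — is the bookkeeping identification in step (C): showing that the ``latching object'' condition in the iterated Reedy construction coincides with the naive notion of an inclusion (degreewise injection) of bisimplicial spaces, and more generally that the two possible iteration orders (build $ss\s$ as $(\s^{\Delta^{op}})^{\Delta^{op}}$ versus as $(\s^{\Delta^{op}})^{\Delta^{op}}$ in the other slot, or as $\s^{(\Delta\times\Delta)^{op}}$ directly) all yield the same model structure. This is a standard but slightly tedious compatibility fact about Reedy structures on products of Reedy categories (the product of Reedy categories is Reedy, and the iterated Reedy structure agrees with the product Reedy structure). I would dispatch it by citing this product-of-Reedy-categories result rather than recomputing latching objects by hand, noting that once cofibrations are level-wise monomorphisms the rest follows formally.
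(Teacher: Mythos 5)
The paper does not actually prove this theorem---it is stated without argument, with the construction living in the reference \cite{Ra17b}---so there is no in-text proof to compare against. Your route, realizing the biReedy structure as the Reedy model structure on $(s\s)^{\Delta^{op}}$ built over the Reedy model structure on $s\s$ and invoking the general existence theorem for Reedy structures on diagram categories, is the standard and correct way to obtain it; the identification of the weak equivalences as the level-wise ones, and the observation that the fibrations are then automatically the maps with the right lifting property against trivial cofibrations, are both fine.

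The one place your verification is genuinely incomplete is the identification of the cofibrations, and the missing step is not the iteration bookkeeping you single out as the main obstacle. The argument you give---relative latching maps are monomorphisms, hence the map is a level-wise monomorphism---only establishes that every Reedy cofibration is an inclusion. The theorem asserts equality of the two classes, so you also need the converse: every inclusion of bisimplicial spaces has all relative latching maps monomorphisms. This direction is not formal and does not follow from the product-of-Reedy-categories compatibility; it is the Eilenberg--Zilber lemma, equivalently the statement that $\Delta$ (and hence $\Delta \times \Delta$) is an elegant Reedy category, so that Reedy cofibrations of $\s$-valued presheaves coincide with monomorphisms. This is the same fact that underlies the assertion, recorded in Theorem \ref{The Segal Space Model Structure}(1), that cofibrations of simplicial spaces are precisely the monomorphisms, and it should be cited explicitly for the bisimplicial case. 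Once both containments are in place, the rest of your argument closes correctly.
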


 Having bisimplicial spaces we can define three fibrations that give us a model of a functor.
 
 \begin{defone} \label{Def Reedy Right Fib}
  \cite[Definition 4.14]{Ra17b}
  Let $p:\R \to \C$ be a bisimplicial space over $\C$.
  We say $p$ is a Reedy right fibration if it is a biReedy fibration and it is a level-wise right fibration.
 \end{defone}

 \begin{defone} \label{Def Segal Cartesian Fibration}
 \cite[Definition 7.2]{Ra17b}
 A map $p: \R \to \C$ is called a {\it Segal Cartesian fibration} if it satisfies the following conditions:
 \begin{enumerate}
  \item It is a Reedy right fibration.
  \item It satisfies the Segal condition, meaning the map 
  $$\R_n \to \R_1 \underset{\R_0}{\times} ... \underset{\R_0}{\times} \R_1$$
  is a Reedy equivalence of simplicial spaces.
 \end{enumerate}
\end{defone} 
 
 \begin{defone} \label{Def Cartesian Fibration}
 \cite[Definition 7.15]{Ra17b}
 A map $p: \R \to \C$ is called a {\it Cartesian fibration} if it is a Segal Cartesian fibration and satisfies the following conditions:
 \begin{itemize}
  \item {\it Completeness Condition:} The map 
  $$\R_0 \to \R_3 \underset{\R_1 \underset{\R_0}{\times} \R_1 \underset{\R_0}{\times} \R_1}{\times} \R_1$$
  is a Reedy equivalence of simplicial spaces.
 \end{itemize}
\end{defone} 
 
 (Segal) Cartesian fibrations come with model structures as well.
  \begin{theone} \label{The Segal Cartesian Model Structure}
  \cite[Theorem 7.3, Theorem 7.16]{Ra17b}
  There is a unique model structure on the category $ss\s_{/\C}$, called the (Segal) Cartesian model structure
  and denoted by $(ss\s_{/X})^{(Seg)Cart}$, which satisfies the following conditions:
  \begin{enumerate}
   \item It is a simplicial model category
   \item The fibrant objects are the (Segal) Cartesian fibrations over $\C$
   \item Cofibrations are monomorphisms
   \item A map $f: A \to B$ over $X$ is a weak equivalence if 
   $$map_{ss\s_{/X}}(B,C) \to map_{ss\s_{/X}}(A,C)$$ 
   is an equivalence for every (Segal) Cartesian fibration $C \to X$.
   \item A weak equivalence ((Segal) Cartesian fibration) between fibrant objects is a level-wise equivalence (biReedy fibration). 
  \end{enumerate}
 \end{theone}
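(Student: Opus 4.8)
The plan is to produce both the Segal Cartesian and the Cartesian model structures as iterated left Bousfield localizations, starting from a model structure on $ss\s_{/\C}$ whose fibrant objects are precisely the Reedy right fibrations of Definition \ref{Def Reedy Right Fib}. That starting structure can be built by regarding a bisimplicial space over $\C$ as a functor $\Delta^{op} \to s\s_{/\C}$ and putting on $(s\s_{/\C})^{\Delta^{op}}$ the Reedy model structure whose coefficient model structure on $s\s_{/\C}$ is the contravariant one (the right-fibration analogue of Theorem \ref{The Covariant Model Structure}); equivalently it is a left Bousfield localization of the biReedy model structure of Theorem \ref{The BiReedy Model Structure} at the generating trivial cofibrations of the contravariant structure spread out into each simplicial degree. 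Either way I would first record that this model structure is simplicial (Reedy constructions over simplicial model categories are), combinatorial (presheaf categories and the Reedy and localization constructions preserve this), left proper (all objects are cofibrant, since cofibrations are inclusions), and has cofibrations exactly the monomorphisms; a matching-object argument then identifies its fibrant objects with the Reedy right fibrations.

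Next I would localize twice. \emph{Step 1 (Segal condition).} Left Bousfield localize the structure above at the set of maps in $ss\s_{/\C}$ induced over $\C$ by the spine inclusions $F(1) \cup_{F(0)} \cdots \cup_{F(0)} F(1) \hookrightarrow F(n)$, $n \geq 2$, taken in the simplicial direction along which the Segal condition of Definition \ref{Def Segal Cartesian Fibration} is imposed. The localization is again simplicial, combinatorial and left proper, still with cofibrations the monomorphisms; unwinding the locality condition in the slice shows that a Reedy right fibration $p : \R \to \C$ is local exactly when $\R_n \to \R_1 \underset{\R_0}{\times} \cdots \underset{\R_0}{\times} \R_1$ is a Reedy equivalence, i.e.\ exactly when $p$ is a Segal Cartesian fibration. \emph{Step 2 (completeness).} Localize the Segal Cartesian structure once more at the single map whose local objects are those for which $\R_0 \to \R_3 \underset{\R_1 \underset{\R_0}{\times} \R_1 \underset{\R_0}{\times} \R_1}{\times} \R_1$ is an equivalence, which is the bisimplicial analogue of the completeness map $F(0) \hookrightarrow E(1)$ from Definition \ref{Def Complete Segal Spaces}. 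The fibrant objects now become the Cartesian fibrations of Definition \ref{Def Cartesian Fibration}. All of the abstract properties pass through these localizations.

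For uniqueness I would invoke the general fact that a left proper combinatorial model structure on a fixed underlying category is pinned down by its class of cofibrations together with its class of fibrant objects; since both are prescribed here (the monomorphisms, and the (Segal) Cartesian fibrations), the structure must agree with the one just constructed. Property (5) — that a weak equivalence, respectively fibration, between fibrant objects is a level-wise equivalence, respectively biReedy fibration — then follows from the standard comparison between a localization and the model structure it localizes: between local objects the localized weak equivalences and fibrations coincide with those of the Reedy-over-contravariant structure, whose degreewise ingredients are the biReedy equivalences, respectively the right fibrations.

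The hard part will be Steps 1 and 2, namely converting the homotopical ``locality'' condition delivered by Bousfield localization into the explicit Segal and completeness squares. Concretely, for a Reedy right fibration $p : \R \to \C$ one must show that being local against the spine maps (respectively the completeness map), as measured by derived mapping spaces in $ss\s_{/\C}$, is equivalent to the corresponding comparison map of simplicial spaces being a Reedy equivalence; this needs a careful analysis of mapping spaces in the slice over $\C$, and is where the combinatorics of \cite{Ra17b} genuinely enters, together with checking that neither the base $\C$ nor the fibration condition obstructs the identification. This is exactly the technical content of \cite[Theorem 7.3, Theorem 7.16]{Ra17b}.
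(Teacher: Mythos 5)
The paper does not prove this statement: it is quoted verbatim as an imported result, with the proof deferred to \cite[Theorems 7.3 and 7.16]{Ra17b}, so there is no in-paper argument to compare yours against. That said, your outline --- start from the Reedy-contravariant model structure on $ss\s_{/\C}$ whose fibrant objects are the Reedy right fibrations, then left Bousfield localize at spine inclusions and at a completeness map, identify the local objects with the (Segal) Cartesian fibrations, and get uniqueness from the fact that cofibrations plus fibrant objects determine the model structure --- is the standard construction and is essentially the route taken in the cited reference. You correctly isolate the genuinely technical step, namely translating locality against the spine and completeness maps into the explicit Reedy-equivalence conditions of Definitions \ref{Def Segal Cartesian Fibration} and \ref{Def Cartesian Fibration}; the one point worth making explicit in a full write-up is that the localizations in the outer simplicial direction do not destroy the level-wise right-fibration property of fibrant objects, so that the fibrant objects of the localized structures really are Reedy right fibrations satisfying the Segal (resp.\ completeness) condition rather than some larger class.
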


 We have following recognition principle for (Segal) Cartesian fibration.
 
 \begin{defone}
  There is a map $(i_{\varphi})_*: ss\s \to s\s$ defined as 
  $$(i_{\varphi})_*(X)_{nl} = X_{n0l}$$
 \end{defone}

 \begin{theone}
  \cite[Corollary 7.4, Corollary 7.18]{Ra17b}
  Let $\R \to \C$ be a Reedy right fibration. The following are equivalent.
  \begin{enumerate}
   \item $\R \to \C$ is a Segal Cartesian fibration (Cartesian fibration).
   \item $(i_{\varphi})_*(\R)$ is a Segal space (CSS).
   \item $(i_{\varphi})_*(\R \times_{\C} F(0))$ is a Segal space (CSS) for each object $c$ in $\C$.
  \end{enumerate}
 \end{theone}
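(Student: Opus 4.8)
The plan is to run the cycle (1) $\Rightarrow$ (2) $\Rightarrow$ (3) $\Rightarrow$ (1), with essentially all the content in the last implication; throughout I will treat the Segal/Cartesian (equivalently Segal space/CSS) cases in parallel, since the completeness clause behaves exactly like the Segal clause. The bookkeeping rests on two groups of facts. First, $(i_{\varphi})_*$ is restriction along $i_\varphi\colon\Delta\to\Delta\times\Delta$, $[n]\mapsto([n],[0])$; it preserves all limits (in particular the finite fibre products occurring in the Segal and completeness conditions), it keeps the retained simplicial direction intact, so it carries the Segal map $\R_n\to\R_1\times_{\R_0}\cdots\times_{\R_0}\R_1$ and the completeness map $\R_0\to\R_3\times_{\R_1\times_{\R_0}\R_1\times_{\R_0}\R_1}\R_1$ of $\R$ to the corresponding maps of $(i_\varphi)_*\R$, it sends biReedy-fibrant objects to Reedy-fibrant ones and biReedy fibrations to Reedy fibrations (\cite{Ra17b}), and it fixes the embedded copies of $\C$ and of $F(0)$. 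Second (\cite{Ra17a}), right fibrations over $\C$ are stable under the finite limits above, so the sources and targets of those maps are again Reedy right fibrations over $\C$; and a right fibration over the terminal object $F(0)$ is exactly a homotopy-constant simplicial space, so the fibre $\R\times_\C F(0)$ of a Reedy right fibration over an object $c$ is homotopy constant in the second simplicial direction and is therefore recovered up to equivalence from its $m=0$ slice, which, since $(i_\varphi)_*$ preserves pullbacks and fixes $F(0)$, is $(i_\varphi)_*(\R\times_\C F(0))$. Consequently a map of Reedy right fibrations over $\C$ is a biReedy equivalence if and only if its pullback along every $c\colon F(0)\to\C$ becomes an equivalence on the $m=0$ slice; this fibrewise detection of equivalences is the contravariant-model-structure statement of \cite{Ra17a,Ra17b}, the right-handed analogue of Theorem \ref{The Covariant Model Structure}.

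The first two implications are then formal. For (1) $\Rightarrow$ (2), apply $(i_\varphi)_*$ to the Segal (and, in the Cartesian case, completeness) map of $\R$: by the remarks above it becomes the corresponding map of $(i_\varphi)_*\R$, a biReedy equivalence restricts to a degreewise equivalence of spaces, and $(i_\varphi)_*\R$ is Reedy fibrant, so by Definitions \ref{Def Segal Spaces} and \ref{Def Complete Segal Spaces}(2) the object $(i_\varphi)_*\R$ is a Segal space (a CSS). For (2) $\Rightarrow$ (3), note $(i_\varphi)_*\R\to\C$ is a Reedy fibration, so for each object $c$ of $\C$ the square defining $(i_\varphi)_*\R\times_\C F(0)$ is a homotopy pullback; pulling the Segal (and completeness) equivalences of $(i_\varphi)_*\R$ back along $c\colon F(0)\to\C$, and using that $\C$ is a complete Segal space to identify fibre products of fibres with fibres of fibre products, shows that $(i_\varphi)_*\R\times_\C F(0)$ is a Segal space (CSS); and this space is $(i_\varphi)_*(\R\times_\C F(0))$ since $(i_\varphi)_*$ preserves pullbacks and fixes $F(0)$ and $\C$.

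For (3) $\Rightarrow$ (1), which carries the content, we must show the Segal map of $\R$ (and, for the Cartesian statement, its completeness map) is a biReedy equivalence. Its source and target are Reedy right fibrations over $\C$, so by the fibrewise-detection principle of the first paragraph it suffices to check that the pullback of that map along each $c\colon F(0)\to\C$ becomes an equivalence on the $m=0$ slice. The $m=0$ slice of $\R\times_\C F(0)$ is $(i_\varphi)_*(\R\times_\C F(0))$, and since $(i_\varphi)_*$ commutes with the fibre products defining the Segal and completeness maps, the Segal (completeness) map of $(i_\varphi)_*(\R\times_\C F(0))$ is the image under $(i_\varphi)_*$ of the Segal (completeness) map of $\R\times_\C F(0)$. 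By hypothesis (3) the former is an equivalence for every $c$; hence the Segal (completeness) map of $\R$ is a biReedy equivalence, i.e.\ $\R$ is a Segal Cartesian fibration (Cartesian fibration) by Definitions \ref{Def Segal Cartesian Fibration} and \ref{Def Cartesian Fibration}.

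The main obstacle is the fibrewise-detection input used in the first and third paragraphs: the identification of right fibrations over the point with homotopy-constant simplicial spaces, and with it the statement that a biReedy equivalence between Reedy right fibrations over $\C$ can be tested on the $m=0$ slices of the fibres over objects of $\C$. Establishing this precisely, together with the stability of right fibrations under the relevant finite limits, is the part that genuinely uses the machinery of \cite{Ra17a,Ra17b}; granting it, the remaining steps are routine manipulations with the limit-preserving restriction functor $(i_\varphi)_*$.
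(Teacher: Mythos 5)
The paper does not actually prove this statement: it is imported verbatim from \cite{Ra17b} (Corollaries 7.4 and 7.18 there), so there is no in-paper argument to compare yours against. Judged on its own terms, your reconstruction is correct in outline and identifies the right technical inputs. The cycle $(1)\Rightarrow(2)\Rightarrow(3)\Rightarrow(1)$ works, and the two facts you isolate as the real content are indeed the crux: (i) a map between Reedy-fibrant right fibrations over $\C$ is a level-wise equivalence if and only if it is an equivalence on fibres over objects of $\C$ (equivalences between fibrant objects in the contravariant model structure are detected fibrewise), and (ii) a right fibration over $F(0)$ is homotopically constant, so equivalences between such are detected on the $0$-th space. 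Together these reduce the Reedy equivalence demanded by Definitions \ref{Def Segal Cartesian Fibration} and \ref{Def Cartesian Fibration} to the $m=0$, fibrewise statement that is condition (3). Two small points you handle correctly but should keep explicit if you write this up: the target of the Segal map, $\R_1\times_{\R_0}\cdots\times_{\R_0}\R_1$, is again a Reedy right fibration over $\C$ only because biReedy fibrancy makes the face maps $\R_1\to\R_0$ Reedy fibrations, so the strict iterated pullback is homotopically meaningful and commutes with taking fibres and with evaluation at level $0$; and in $(2)\Rightarrow(3)$ the identification of the fibre of the Segal map with the Segal map of the fibre genuinely uses that $\C_n\to\C_1\times_{\C_0}\cdots\times_{\C_0}\C_1$ is an equivalence, i.e.\ that the base is a (complete) Segal space, as you note parenthetically. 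With those caveats the argument is sound; everything you defer to \cite{Ra17a} and \cite{Ra17b} is in fact established there.
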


 \subsection{Representable Reedy Right Fibrations} 
 \label{Subsec Representable Reedy Right Fibrations}
 There is a well-established theory of representable right fibrations that is motivated by theory of representable presheaves.
 In this subsection we review representable right fibrations and show how they generalize to representable Cartesian fibrations.
 
 \begin{defone}
  Let $ c$ be an object in $\C$, we define the over-category $\C_{/c}$ as 
  $$\C_{/c} = \C^{F(1)} \underset{\C}{\times} F(0)$$
  thus we can think of it as the subcategory of the arrow category $\C^{F(1)}$ generated by the arrows which have target $c$.
  
 \end{defone}
 It comes with a natural projection map $s:\C_{/c} \to \C$ that sends each arrow to its source. This map is quite important.
 \begin{theone}
  \cite[Example 3.11]{Ra17a}
  For any object $c$ in $\C$, the map $s: \C_{/c} \to \C$ is a right fibration.
 \end{theone}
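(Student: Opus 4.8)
The plan is to check directly the two conditions in Definition~\ref{Def Right Fibration} for $p = s\colon \C_{/c}\to\C$, namely that $s$ is a Reedy fibration and that the square
\begin{center}
\comsq{(\C_{/c})^{F(1)}}{\C_{/c}}{\C^{F(1)}}{\C}{t}{s^{F(1)}}{s}{t}
\end{center}
is a homotopy pullback.

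For the first point I would observe that, by its definition $\C_{/c} = \C^{F(1)}\times_{\C}F(0)$, the object $\C_{/c}$ is the strict pullback of $(s,t)\colon \C^{F(1)}\to\C\times\C$ along $\mathrm{id}_{\C}\times c\colon \C\cong\C\times F(0)\to\C\times\C$, and that under this identification the projection onto $\C$ recording the source is exactly the map $s$ above. Since $\C$ is a CSS it is Reedy fibrant, and $\partial F(1)\hookrightarrow F(1)$ is a monomorphism, hence a cofibration; as the Reedy model structure on $s\s$ is compatible with the Cartesian closed structure, the cotensor $(s,t)\colon \C^{F(1)}\to\C^{\partial F(1)}=\C\times\C$ is a Reedy fibration. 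Reedy fibrations are stable under base change, so $s\colon\C_{/c}\to\C$ is a Reedy fibration; in particular $\C_{/c}$ is Reedy fibrant.

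For the homotopy pullback condition, since $\C^{F(1)}$ is Reedy fibrant and $s\colon\C_{/c}\to\C$ is a Reedy fibration, the homotopy pullback in the square is modelled by the strict pullback $\C^{F(1)}\times_{\C}\C_{/c}$, so it suffices to show that the comparison map $\varphi\colon(\C_{/c})^{F(1)}\to\C^{F(1)}\times_{\C}\C_{/c}$ is a Reedy equivalence. Using that the cotensor $(-)^{F(1)}$ preserves limits and that $F(0)$ is terminal, a point of $(\C_{/c})^{F(1)}$ is a morphism of $\C_{/c}$: a pair of objects $f_0\colon a_0\to c$ and $f_1\colon a_1\to c$, a map $g\colon a_0\to a_1$, and a $2$-cell exhibiting $f_1g\simeq f_0$; the map $\varphi$ sends it to the composable pair $(g\colon a_0\to a_1,\ f_1\colon a_1\to c)$, which is precisely a point of $\C^{F(1)}\times_{\C}\C_{/c}$. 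Up to Reedy equivalence, $\varphi$ is the base change along $\C_{/c}\to\C$ (evaluation at the target $c$) of the spine map $\C^{F(2)}\to\C^{F(1)\cup_{F(0)}F(1)}$; the latter is the cotensor of the Segal-fibrant object $\C$ against the spine inclusion $F(1)\cup_{F(0)}F(1)\hookrightarrow F(2)$, which is a trivial cofibration in the Segal space model structure of Theorem~\ref{The Segal Space Model Structure} because it is a monomorphism and, since $\C$ satisfies the Segal condition of Definition~\ref{Def Segal Spaces}, a weak equivalence there. Hence the spine map is a trivial fibration, so $\varphi$ is a Reedy equivalence, the square is a homotopy pullback, and $s\colon\C_{/c}\to\C$ is a right fibration; this recovers \cite[Example~3.11]{Ra17a}.

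I expect the genuinely delicate step to be the bookkeeping in the last paragraph: a priori $(\C_{/c})^{F(1)}$ is the space of functors $F(1)\times F(1)\to\C$ whose restriction to $\{1\}\times F(1)$ is constant at $c$, and one has to identify this with the ``commuting triangle over $c$'' description and match $\varphi$ with the spine map, the essential input being that the Segal condition on $\C$ forces the degenerate column of such a square to carry no extra homotopical data. The first point (identifying $s$ as a base change of a cotensor Reedy fibration, and reducing the homotopy pullback to a strict one) is routine.
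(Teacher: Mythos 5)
The paper itself offers no proof here: the statement is imported verbatim from \cite[Example 3.11]{Ra17a}, so there is nothing internal to compare your argument against. Your direct verification of Definition \ref{Def Right Fibration} is the natural argument and is correct in outline: exhibiting $s$ as a base change of the Reedy fibration $(s,t)\colon \C^{F(1)}\to\C^{\partial F(1)}=\C\times\C$ handles Reedy fibrancy, and replacing the homotopy pullback by the strict one is legitimate because one leg of the cospan is a Reedy fibration between Reedy fibrant objects.

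The one step you assert rather than prove is exactly the one you flag at the end, and it is the only non-formal step, so it deserves to be spelled out. Literally, $(\C_{/c})^{F(1)}$ is not the fiber of $\C^{F(2)}$ over $c$ but rather $\C^{Q}\times_{\C}F(0)$, where $Q=(F(1)\times F(1))\cup_{\{1\}\times F(1)}F(0)$ is the square with the edge lying over $c$ collapsed, and $\varphi$ is restriction along the spine $F(1)\cup_{F(0)}F(1)\hookrightarrow Q$ picking out the two edges $a_0\to a_1\to c$. So what you actually need is that this inclusion is a trivial cofibration in the Segal space model structure; the Cartesian compatibility in Theorem \ref{The Segal Space Model Structure} then gives that $\C^{Q}\to\C^{F(1)\cup_{F(0)}F(1)}$ is a trivial Reedy fibration, and pulling back over $c$ finishes the argument. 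To see the inclusion is a trivial cofibration, factor it as the spine inclusion into the triangle $T_1=(a_0\to a_1\to c)$ (a generating Segal equivalence) followed by $T_1\hookrightarrow Q$; writing $Q$ as $T_1$ glued along its long edge to the other triangle of the square with its short edge collapsed, the second map is a cobase change of the inclusion of the long edge $F(1)\hookrightarrow F(2)\cup_{F(1)}F(0)$ (the edge $\{1,2\}$ collapsed), which is an equivalence since the collapsed triangle merely witnesses a homotopy between its two surviving edges. With that supplied your proof closes; without it, the identification of $\varphi$ with ``the spine map'' is precisely where the Segal condition enters and cannot be waved through.
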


 \begin{theone}
 \cite[Theorem 4.2]{Ra17a}
  The map $F(0) \to \C_{/c}$ that sends the point to the identity map $id_c: c \to c$ is a contravariant equivalence.
 \end{theone}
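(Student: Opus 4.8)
The plan is to exhibit $F(0)\to\C_{/c}$ as a trivial cofibration in the contravariant model structure on $s\s_{/\C}$. It is a monomorphism, hence a cofibration; and by \cite[Example 3.11]{Ra17a} the structure map $s\colon\C_{/c}\to\C$ (the source of an arrow) is a right fibration, so $\C_{/c}$ is fibrant there. Thus it suffices to show $F(0)\to\C_{/c}$ is a contravariant weak equivalence. By the contravariant analogue of Theorem~\ref{The Covariant Model Structure}(4) (replace ``left fibration'' by ``right fibration''; see \cite{Ra17a}), this amounts to checking that for every right fibration $p\colon\R\to\C$ the restriction map
$$ map_{s\s_{/\C}}(\C_{/c},\R)\ \longrightarrow\ map_{s\s_{/\C}}(F(0),\R) $$
is an equivalence of spaces.

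The target is the fibre $\R\times_{\C}F(0)$ of $p$ over $c$, which is a homotopy fibre since $p$ is a Reedy fibration; write it $\R_c$. For the source I would use the presentation $\C_{/c}=\C^{F(1)}\times_{\C}F(0)$ (the pullback along the target evaluation $\C^{F(1)}\xrightarrow{\ t\ }\C$) together with the defining homotopy pullback square of a right fibration (Definition~\ref{Def Right Fibration}), which for $p$ gives $\R^{F(1)}\xrightarrow{\ \simeq\ }\C^{F(1)}\times_{\C}\R$ (again via the target evaluations and $p$). The idea: a map $\C_{/c}\to\R$ over the source map assigns, to every arrow $f\colon d\to c$ of $\C$, a lift of $d$ to $\R$; the right fibration property then lets one lift $f$ itself, coherently in $f$, to an arrow of $\R$, that is, promote such a map to a map $\C_{/c}\to\R^{F(1)}$ over $\C^{F(1)}$ whose source recovers the original; and via the displayed equivalence such a promoted map is determined, up to a contractible space of choices, by the value of its target, a point of $\R_c$. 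Threading this through the two pullback squares identifies $map_{s\s_{/\C}}(\C_{/c},\R)\simeq\R_c$ compatibly with restriction along $F(0)\xrightarrow{\ id_c\ }\C_{/c}$, which finishes the argument.

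The step I expect to be the main obstacle is the identification of $map_{s\s_{/\C}}(\C_{/c},\R)$ in the second paragraph. Two evaluation maps are in play at once — the structure map of $\C_{/c}$ over $\C$ is the \emph{source}, while the pullback presenting $\C_{/c}$, and the pullback square defining a right fibration, are built from the \emph{target} — and keeping these compatibilities straight is the real content; one must also be careful to use only the definition of a right fibration, since this statement is precisely what bootstraps the Yoneda lemma for right fibrations and so cannot invoke it. An alternative route that avoids mapping spaces is to argue that $F(0)\xrightarrow{\ id_c\ }\C_{/c}$ is built by attaching cells along contravariant trivial cofibrations — morally because $id_c$ is the terminal object of $\C_{/c}$ and the last-vertex inclusions $F(0)\xrightarrow{\ \{n\}\ }F(n)$ over $\C$ are contravariant equivalences — where the obstacle is instead the combinatorial bookkeeping for the filtration.
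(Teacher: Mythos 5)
First, a point of comparison: the paper contains no proof of this statement at all — it is imported verbatim from \cite[Theorem 4.2]{Ra17a} — so your proposal can only be judged on its own merits. Your opening reduction is correct: the map is a cofibration, $\C_{/c}$ is fibrant by \cite[Example 3.11]{Ra17a}, and the mapping-space criterion correctly reduces the claim to showing that restriction along $id_c$ gives an equivalence $map_{s\s_{/\C}}(\C_{/c},\R)\to \R\times_{\C}F(0)$ for every right fibration $\R\to\C$. But that equivalence \emph{is} the Yoneda lemma, and your sketch of it contains a directional error. The homotopy pullback square of Definition \ref{Def Right Fibration} says that $(p^{F(1)},t)\colon\R^{F(1)}\to\C^{F(1)}\times_{\C}\R$ is a trivial Reedy fibration: it lets you lift an arrow of $\C$ after specifying a lift of its \emph{target}, with the source then determined as output. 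You propose to start from $g\colon\C_{/c}\to\R$ lying over the \emph{source} map and promote it to $\tilde g\colon\C_{/c}\to\R^{F(1)}$ over $\C^{F(1)}\hookleftarrow\C_{/c}$ with $s\circ\tilde g=g$; prescribing the source is not data the displayed trivial fibration accepts — that would be the \emph{left}-fibration condition, which $\R$ does not satisfy. Run in the legitimate direction (from a point $r$ of the fiber $\R_c$, lift the pair consisting of the inclusion $\C_{/c}\hookrightarrow\C^{F(1)}$ and the constant map at $r$, then take sources) one obtains a candidate inverse to evaluation, but verifying that the two composites are homotopic to identities is precisely the hard content, and your proposal leaves it at the level of ``the idea.''

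The workable proof is your parenthetical alternative, and the obstacle there is smaller than you fear: no cell-by-cell filtration is needed. The map $\max\colon F(1)\times F(1)\to F(1)$ induces a homotopy $H\colon\C_{/c}\times F(1)\to\C_{/c}$ with $H_0$ the identity and $H_1$ the constant map at $id_c$, which fixes the target ($=c$) throughout while the source slides along the arrow; taking the evaluation $(\alpha,u)\mapsto\alpha(u)$ as the structure map of $\C_{/c}\times F(1)$ over $\C$, the whole picture lives in $s\s_{/\C}$. The pushout-product of the cofibration $F(0)\to\C_{/c}$ with the right-anodyne inclusion $\{1\}\to F(1)$ is a contravariant trivial cofibration, and together with $H$ it exhibits $F(0)\to\C_{/c}$ as a retract of that map, hence as a contravariant trivial cofibration. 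This is the standard argument for statements of this type and avoids the mapping-space computation entirely; the Yoneda lemma (the Corollary following the theorem) is then a consequence rather than an input.
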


 This in particular has following important corollary, we which we can think of as the ``Yoneda Lemma for right fibrations".
 
 \begin{corone}
  Let $\R \to \C$ be a right fibration over $\C$ and $c$ an object in $\C$. Then we have an equivalence 
  $$Map_{\C}(\C_{/c},\R) \xrightarrow{ \ \ \simeq \ \ } Map_{\C}(F(0),\R) \simeq \Delta[0] \underset{\C_0}{\times} \R_0$$
 \end{corone}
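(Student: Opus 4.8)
The statement is a mapping-space computation, and the plan is to read it off from the two cited facts about $\C_{/c}$ together with the defining property of the contravariant model structure on $s\s_{/\C}$. First I would record two structural observations. Every object of $s\s_{/\C}$ is cofibrant, since the cofibrations are exactly the monomorphisms; and in the contravariant model structure the fibrant objects are precisely the right fibrations over $\C$ (the dual of Theorem \ref{The Covariant Model Structure}). Consequently $\R \to \C$ is fibrant, and by \cite[Example 3.11]{Ra17a} so is $s: \C_{/c} \to \C$, while the map $F(0) \to \C_{/c}$ classifying $id_c$ is a map of cofibrant objects that is a contravariant weak equivalence by \cite[Theorem 4.2]{Ra17a}.

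Next I would apply $Map_{\C}(-,\R)$. Because the contravariant model structure is simplicial, every object is cofibrant, and $\R$ is fibrant, the strict simplicial mapping space $Map_{\C}(A,\R)$ already computes the derived one; so by Ken Brown's lemma (equivalently, directly by the characterization of contravariant equivalences as those maps $f$ for which $Map_{\C}(f,\R)$ is an equivalence for every right fibration $\R \to \C$) the map $F(0) \to \C_{/c}$ induces an equivalence
$$ Map_{\C}(\C_{/c},\R) \xrightarrow{\ \ \simeq\ \ } Map_{\C}(F(0),\R). $$

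It remains to identify the right-hand side. Since $F(0) \times \Delta[l]$ is one of the representing bisimplicial sets, one has $Map_{s\s}(F(0),\R) \cong \R_0$, naturally in $\R$; and the structure map $F(0)\to\C$ in play here is the one sending the point to $c = s(id_c)$, i.e. a point $\Delta[0] \to \C_0$. Hence $Map_{\C}(F(0),\R)$ is the fiber of $\R_0 \to \C_0$ over $c$, and because $\R\to\C$ is a Reedy fibration the map $\R_0 \to \C_0$ is a Kan fibration, so this strict fiber is a homotopy fiber and equals $\Delta[0] \times_{\C_0} \R_0$, as claimed. I do not expect a genuine obstacle; the only point needing care is exactly the remark that, over $\C$, the strict mapping space computes the derived one, which is what licenses applying the homotopy-invariance argument to the weak equivalence $F(0)\to\C_{/c}$, which is not itself a cofibration.
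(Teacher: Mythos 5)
Your proposal is correct and follows exactly the route the paper intends: the corollary is stated as an immediate consequence of the two preceding theorems, with the first map being an equivalence because $F(0)\to\C_{/c}$ is a contravariant equivalence and contravariant equivalences are detected by mapping spaces into right fibrations, and the second identification being the standard computation of $Map_{\C}(F(0),\R)$ as the fiber of $\R_0 \to \C_0$ over $c$. Your added care about cofibrancy, fibrancy, and the strict-versus-derived mapping space is a reasonable elaboration of what the paper leaves implicit.
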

 
 This corollary justifies following definition.
 
 \begin{defone}
  We say a right fibration $\R \to \C$ is representable if it is equivalent to a right fibration of the form $\C_{/c} \to \C$, for some
  object $c$ in $\C$.
 \end{defone}

 Our next goal is it to generalize all of these results to simplicial objects and Reedy right fibrations.
 So, we first generalize over-categories and construct a Cartesian fibration out of simplicial objects.

 \begin{defone}
  Let $X_{\bullet}$ be a simplicial object in $\C$. We define $\C_{/X_{\bullet}}$ as following bisimplicial space
  $$(\C_{/X_{\bullet}})_k = \C_{/ \pi^i_kX}.$$ 
  Here  $(\pi^i)_k: (\Delta^{op})_{k/} \to \Delta^{op}$ is the projection map.
 \end{defone}

 \begin{theone}
  \cite[Definition 5.27, Proposition 5.29]{Ra17b}
  $\C_{/X_{\bullet}} \to \C$ is a Reedy right fibration. 
 \end{theone}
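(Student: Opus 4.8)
The plan is to verify the two conditions in Definition~\ref{Def Reedy Right Fib}: that $\C_{/X_{\bullet}} \to \C$ is a level-wise right fibration, and that it is a biReedy fibration. For the level-wise statement, note that at simplicial level $k$ the map is $\C_{/\pi^i_k X} \to \C$, where $\pi^i_k X$ is a simplicial object in $\C$ obtained by restricting $X$ along the projection $(\pi^i)_k : (\Delta^{op})_{k/} \to \Delta^{op}$. But $\C_{/Y}$ for a \emph{single} object $Y$ was shown to give a right fibration in \cite[Example 3.11]{Ra17a}, and the construction $\C_{/c} = \C^{F(1)} \times_{\C} F(0)$ is functorial in $c$ and sends limits to limits. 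So first I would observe that $\C_{/\pi^i_k X}$ is built level-wise (in the $\Delta^{op}$-direction indexing the simplicial object $X$) out of the over-CSS construction applied to each $X_n$, and that a level-wise right fibration is again a right fibration because right fibrations are characterized by a homotopy pullback square (Definition~\ref{Def Right Fibration}) and homotopy pullbacks are computed level-wise in $s\s$.

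Next I would address the biReedy fibrancy. Here the key point is that $\C$ itself, regarded as a constant bisimplicial space (or more precisely the target of the fibration), is biReedy fibrant since it is Reedy fibrant as a simplicial space, and that the matching-object maps for $\C_{/X_{\bullet}}$ can be identified with over-category constructions applied to the (Reedy fibrant, by Remark~\ref{Rem Context Category} and the standing assumption that $\C$ has finite limits) matching objects of $X_{\bullet}$. Concretely, because $\C_{/(-)}$ preserves limits in the variable, the biReedy matching object of $\C_{/X_{\bullet}}$ in the new simplicial direction is $\C_{/M_k X}$ where $M_k X$ is the corresponding matching object of $X_{\bullet}$, and the matching map is $s : \C_{/X_k} \to \C_{/M_k X}$ over $\C$, which is again a right fibration — in particular a Reedy fibration — by the same functoriality argument. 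Combined with the level-wise right fibration property, this gives the required right lifting property against trivial cofibrations.

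The cleanest route, and the one I would actually follow, is to simply cite \cite[Definition 5.27, Proposition 5.29]{Ra17b} directly, since the statement is verbatim the content of that proposition: the whole construction $\C_{/X_{\bullet}}$ with its Reedy-right-fibration property is established there, and here it is being recalled as a known input to the theory of representable Cartesian fibrations. The \emph{main obstacle}, if one insists on an internal argument rather than a citation, is bookkeeping the two simplicial directions carefully — distinguishing the $\Delta^{op}$ that indexes the simplicial object $X_{\bullet}$ (which after applying $\C_{/(-)}$ and the over-category functoriality becomes part of the $s\s$-structure) from the new $\Delta^{op}$-direction coming from $(\pi^i)_k : (\Delta^{op})_{k/} \to \Delta^{op}$ that makes $\C_{/X_{\bullet}}$ a \emph{bi}simplicial space — and checking that the biReedy matching objects really do commute with the $\C_{/(-)}$ construction. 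Once that identification is in place, everything reduces to the single-object case from \cite{Ra17a} plus the fact that limits of right fibrations are right fibrations.
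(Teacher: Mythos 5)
The paper offers no proof of this statement at all: it is imported verbatim from \cite[Definition 5.27, Proposition 5.29]{Ra17b}, which is exactly the route your final paragraph settles on, so your approach coincides with the paper's. (Your supplementary internal sketch is not something the paper attempts; it is admittedly loose in separating the $\Delta^{op}$ indexing $X_{\bullet}$ from the simplicial-space direction of each level $\C_{/\pi^i_kX}$, but since the citation is the intended justification here, that looseness is immaterial.)
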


 Such Reedy right fibrations even come with their own Yoneda Lemma.
 
 \begin{theone}
  \cite[Theorem 5.34]{Ra17b}
  Let $X_{\bullet},Y_{\bullet}$ be two simplicial objects in $\C$. Then we have a map 
  $$Map_{\C}(\C_{/X_{\bullet}},\C_{/Y_{\bullet}}) \xrightarrow{ \ \ \simeq \ \ } map_{\C^{\Delta^{op}}}(X_{\bullet},Y_{\bullet})$$
  that is a trivial Kan fibration.
 \end{theone}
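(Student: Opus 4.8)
The plan is to deduce this ``simplicial-object Yoneda lemma'' from the one-object case recorded above --- the corollary to \cite[Theorem 4.2]{Ra17a}, stating that $Map_{\C}(\C_{/c},\R) \simeq \Delta[0]\underset{\C_0}{\times}\R_0$ for every right fibration $\R\to\C$ --- together with the fact, established just before the statement, that both $\C_{/X_{\bullet}}\to\C$ and $\C_{/Y_{\bullet}}\to\C$ are Reedy right fibrations. First I would produce the comparison map itself in a strictly functorial way. At each simplicial level $k$ the bisimplicial space $\C_{/X_{\bullet}}$ has level $k$ equal to the slice $\C_{/\pi^i_k X}$, which is assembled inside $ss\s$ out of the representable right fibrations $\C_{/c}$ attached to the objects $c$ occurring in $\pi^i_k X$. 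Restricting along the inclusions $F(0)\hookrightarrow\C_{/c}$ that send the point to the identity, and letting $k$ vary, yields a natural map $Map_{\C}(\C_{/X_{\bullet}},\C_{/Y_{\bullet}})\to map_{\C^{\Delta^{op}}}(X_{\bullet},Y_{\bullet})$ whose fibrewise behaviour I then analyse.

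Next I would show this map is a weak equivalence. Since $\C_{/Y_{\bullet}}$ is a Reedy right fibration, hence fibrant, $Map_{\C}(-,\C_{/Y_{\bullet}})$ is homotopy invariant and carries the biReedy presentation of $\C_{/X_{\bullet}}$ by its slices into a limit over $\Delta$ of the spaces $Map_{\C}((\C_{/X_{\bullet}})_k,\C_{/Y_{\bullet}})$. Applying the one-object Yoneda corollary to each slice $\C_{/c}$ occurring there identifies these spaces with the corresponding values of $(i_{\varphi})_*(\C_{/Y_{\bullet}})$, and the one-object Yoneda lemma applied to $\C_{/Y_{\bullet}}$ itself identifies those values with the values of $Y_{\bullet}$. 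The limit over $\Delta$ of the resulting cosimplicial space is, essentially by definition, $map_{\C^{\Delta^{op}}}(X_{\bullet},Y_{\bullet})$, and tracing through the identifications shows that this equivalence is the comparison map.

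Then I would upgrade ``weak equivalence'' to ``trivial Kan fibration''. Because the comparison map was built levelwise by restriction along the maps $F(0)\hookrightarrow\C_{/c}$, and because trivial Kan fibrations are closed under the limits in $ss\s$ used to assemble $\C_{/X_{\bullet}}$ from its slices, it suffices to verify the right lifting property against $\partial\Delta[n]\hookrightarrow\Delta[n]$ in the representable case $\C_{/X_{\bullet}}=\C_{/c}$. There it follows from \cite[Theorem 4.2]{Ra17a}, which says $F(0)\to\C_{/c}$ is a contravariant equivalence, together with the Reedy right fibration property of $\C_{/Y_{\bullet}}$, which supplies the required lifts. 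Combined with the equivalence from the previous step this gives the claim.

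The main obstacle I expect is the bisimplicial bookkeeping underlying the middle two steps: making precise the presentation of $\C_{/X_{\bullet}}$ in terms of the representable slices $\C_{/c}$ \emph{in both simplicial directions at once}, and checking that the one-object Yoneda equivalences are natural in the indexing category of simplices of $X_{\bullet}$ rather than merely objectwise, so that they genuinely assemble into an equivalence of mapping spaces and respect the relevant limits and colimits. Once a strict model for the comparison map is in hand, the promotion to a trivial Kan fibration is comparatively formal, but it too relies on keeping the whole construction on the nose rather than up to coherent homotopy, so some care is needed there as well.
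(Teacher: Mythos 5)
First, a point of reference: the paper does not prove this statement at all --- it is quoted verbatim from \cite[Theorem 5.34]{Ra17b} and used as a black box (the introduction says explicitly that the results of \cite{Ra17b} are to be accepted as given). So there is no in-paper argument to measure your proposal against; what follows is an assessment of your sketch on its own terms.

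Your first two steps are the natural Yoneda-style argument and are essentially sound. Each level $(\C_{/X_\bullet})_k = \C_{/\pi^i_k X}$ is the slice over a diagram indexed by the under-category $(\Delta^{op})_{k/}$, which has an initial object, so each level is itself equivalent to a single representable right fibration $\C_{/X_k}$ (this is also what makes Theorem \ref{The Building Simp Obj} work); the one-object Yoneda lemma then identifies $Map_{\C}((\C_{/X_\bullet})_k,(\C_{/Y_\bullet})_k)$ with $map_{\C}(X_k,Y_k)$, and the end over $\Delta$ of these spaces is $map_{\C^{\Delta^{op}}}(X_\bullet,Y_\bullet)$. The genuine gap is in your third step. ``Trivial Kan fibrations are closed under the limits in $ss\s$ used to assemble $\C_{/X_\bullet}$ from its slices'' is not a principle you can invoke: $\C_{/X_\bullet}$ sits in the \emph{source} of the mapping space, so what you would need is a colimit presentation of the source, and in any case a limit of trivial Kan fibrations over a nontrivial index category is again a trivial Kan fibration only under Reedy-type fibrancy hypotheses on the diagram of maps --- and verifying those hypotheses is exactly the ``bisimplicial bookkeeping'' you defer, so the reduction is circular as stated. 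The standard way to close this, and the one that directly generalizes the quoted one-object case, is to produce a single monomorphism $S \hookrightarrow \C_{/X_\bullet}$ over $\C$ (the sub-bisimplicial space of identity cones, i.e.\ the levelwise images of the maps $F(0)\to \C_{/X_k}$) such that $Map_{\C}(S,\C_{/Y_\bullet})$ is isomorphic on the nose to $map_{\C^{\Delta^{op}}}(X_\bullet,Y_\bullet)$ and such that the inclusion is a cofibration and a weak equivalence in the relevant contravariant/Reedy model structure (the levelwise generalization of \cite[Theorem 4.2]{Ra17a}); the trivial Kan fibration is then a single application of the SM7 axiom against the fibrant object $\C_{/Y_\bullet}$. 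That reformulation packages all the naturality you worry about into one cofibration and avoids any limit-closure argument.
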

 
 This justifies a new definition.
 
 \begin{defone}
  A Reedy right fibration is {\it representable} if it is equivalent to a 
  Reedy right fibration of the form $\C_{/X_{\bullet}}$ for some simplicial object $X_{\bullet}$.
 \end{defone}

 Up until here we have defined Cartesian fibrations which model presheaves valued in higher categories
 and constructed representable Reedy right fibrations which build functors valued in simplicial spaces out of simplicial objects.
 The next goal is to combine these two approaches and build representable (Segal) Cartesian fibrations
 out of (complete) Segal objects.
 
 
 \subsection{Representable Cartesian fibrations out of Complete Segal Objects}
 \label{Subsec Representable Cartesian fibrations out of Complete Segal Objects}
 In the past subsection we showed how we can use simplicial objects to build Reedy right fibrations.
 In this section we specialize this approach to show we can build (Segal) Cartesian fibrations.
 
 \begin{theone}
  Let $W$ be a Segal object in $\C$. Then $\C_{/W}$ is a Segal Cartesian fibration.
 \end{theone}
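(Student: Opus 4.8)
The plan is to verify the defining conditions of a Segal Cartesian fibration (Definition \ref{Def Segal Cartesian Fibration}) for the map $s\colon \C_{/W} \to \C$, reusing as much of the existing machinery as possible. We already know from the cited results of \cite{Ra17b} that $\C_{/X_\bullet} \to \C$ is a Reedy right fibration for any simplicial object $X_\bullet$, so the first condition is automatic and nothing new is required there. What remains is the Segal condition: we must show that for $n \geq 2$ the natural map
$$ (\C_{/W})_n \longrightarrow (\C_{/W})_1 \underset{(\C_{/W})_0}{\times} \cdots \underset{(\C_{/W})_0}{\times} (\C_{/W})_1 $$
is a Reedy equivalence of simplicial spaces. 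The key observation is that $(\C_{/W})_k = \C_{/\pi^i_k W}$ is built levelwise from over-categories, and the functor $c \mapsto \C_{/c}$ sends limit cones in $\C$ to homotopy limit cones among right fibrations (an over-category of a limit is the limit of the over-categories, up to the relevant equivalence).

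Concretely, I would proceed as follows. First, unwind the definition so that the $n$-th level $(\C_{/W})_n$ is identified with the Reedy right fibration associated to the restriction of $W$ along the projection from $(\Delta^{op})_{n/}$; the Segal map above is then induced, levelwise in the simplicial-space direction, by the Segal cone map $W_n \to W_1 \times_{W_0} \cdots \times_{W_0} W_1$ of Definition \ref{Def Segal Obj}. Second, invoke the fact that $W$ is a Segal object, so this cone map is an equivalence in $\C$. Third — and this is the technical heart — transport this equivalence through the construction $c \mapsto \C_{/c}$: one needs that a finite limit diagram in $\C$ induces a homotopy limit diagram of right fibrations $\C_{/(-)} \to \C$, so that $\C_{/W_n} \simeq \C_{/W_1} \times_{\C_{/W_0}} \cdots \times_{\C_{/W_0}} \C_{/W_1}$ as objects over $\C$, and hence the Segal map for $\C_{/W}$ is a levelwise (biReedy) equivalence. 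Finally, apply the recognition principle: it suffices to check that $(i_\varphi)_*(\C_{/W})$ is a Segal space, or equivalently that $(i_\varphi)_*(\C_{/W} \times_\C F(0))$ is a Segal space for each object $c$ — and $(i_\varphi)_*(\C_{/W} \times_\C F(0))$ is precisely (a model for) the Segal space $\operatorname{Map}_{\C}(\C_{/c}, \C_{/W}) \simeq \operatorname{Map}_{\C^{\Delta^{op}}}(c_\bullet, W_\bullet)$ by the Yoneda lemma for Reedy right fibrations, and this is a Segal space because $W$ satisfies the Segal condition levelwise.

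I expect the main obstacle to be the third step: making precise and rigorous the claim that forming over-categories commutes with finite homotopy limits in the homotopical sense needed here, i.e. that $\C_{/(-)}$ preserves limit cones up to contravariant (or biReedy) equivalence rather than on the nose. The cleanest route is probably to avoid this issue entirely by routing through the recognition principle and the Yoneda lemma from \cite{Ra17b}: since $\operatorname{Map}_{\C}(\C_{/c}, \C_{/W}) \simeq \operatorname{Map}_{\C^{\Delta^{op}}}(\underline{c}, W)$ and mapping spaces out of $\underline{c}$ into a levelwise-Segal simplicial object again satisfy the Segal condition, the criterion ``$(i_\varphi)_*(\C_{/W} \times_\C F(0))$ is a Segal space for each $c$'' follows directly. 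This reduces the whole proof to an application of Theorem \cite[Corollary 7.4]{Ra17b} together with the observation that the Segal condition on $W$ passes to these mapping spaces, which is the argument I would write out in detail.
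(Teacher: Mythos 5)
Your final recommended route --- invoke the recognition principle of \cite[Corollary 7.4]{Ra17b} and check that the fiber $(i_{\varphi})_*(\C_{/W} \times_{\C} F(0))$ over each object $D$ is a Segal space, which follows because the levelwise identification of that fiber with $map_{\C}(D,W_k)$ carries the Segal limit cone for $W$ to the Segal condition for the fiber --- is exactly the proof the paper gives. The first half of your proposal (transporting the Segal equivalence through $c \mapsto \C_{/c}$ by commuting over-categories with finite limits up to biReedy equivalence) is the harder detour you yourself flag, and the paper simply skips it in favor of the fiberwise argument you settle on.
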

 
 \begin{proof}
  We already know that $\C_{/W}$ is a Reedy right fibration.
  It suffices to show that the fiber over each point is actually a Segal space.
  Let $D$ be an object in $\C$. We have to show that the simplicial space $(i_{\varphi})_*(\C_{/W} \times_{\C} F(0))$ is a Segal space.
  The Reedy right fibration condition implies that it is Reedy fibrant. Thus we only have to 
  show that it satisfies the Segal condition.
  \par 
  By definition we have an equivalence $((\C_{/W})_k \times_{\C} F(0))_0 \simeq map_{\C}(D,W_k)$.
  Moreover, we have an equivalence
  $$map_{\C}(D,W_k) \to map_{\C}(D,W_1) \underset{map_{\C}(D,W_0)}{\times} ...  \underset{map_{\C}(D,W_0)}{\times} map_{\C}(D,W_1).$$
  This follows from the fact that the diagram in Remark \ref{Rem Segal Diagram} is a limiting cone, which is preserved by the mapping spaces.
  However, this implies that the map 
  $$((\C_{/W})_k \times_{\C} F(0))_0 \to ((\C_{/W})_1 \times_{\C} F(0))_0 \underset{((\C_{/W})_0 \times_{\C} F(0))_0}{\times} ... 
  \underset{((\C_{/W})_0 \times_{\C} F(0))_0}{\times} ((\C_{/W})_1 \times_{\C} F(0))_0 $$
  is a trivial Kan fibration, which shows that $(i_{\varphi})_*(\C_{/W} \times_{\C} F(0))$ is a Segal space and hence we are done.
 \end{proof}

 \begin{theone}
  Let $W$ be a complete Segal object in $\C$. Then $\C_{/W}$ is a Cartesian fibration.
 \end{theone}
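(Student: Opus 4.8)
The plan is to deduce this from the previous theorem together with the recognition principle for Cartesian fibrations, rather than verifying the completeness condition of Definition \ref{Def Cartesian Fibration} by hand. By the previous theorem we already know that $\C_{/W} \to \C$ is a Segal Cartesian fibration, and in particular a Reedy right fibration for which each fiber $(i_{\varphi})_*(\C_{/W} \times_{\C} F(0))$ is a Segal space. So by the recognition principle \cite[Corollary 7.4, Corollary 7.18]{Ra17b} it will suffice to show that, for every object $D$ of $\C$, the Segal space
$$Z := (i_{\varphi})_*(\C_{/W} \times_{\C} F(0))$$
(where $F(0)$ selects $D$) is in fact a complete Segal space, i.e. satisfies condition $(2)$ of Definition \ref{Def Complete Segal Spaces}.

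First I would record, exactly as in the proof of the previous theorem, the natural equivalence $Z_n \simeq map_{\C}(D, W_n)$, and note that it is compatible with the simplicial operators, so that $Z \simeq map_{\C}(D, W_{\bullet})$ as simplicial spaces. The completeness square of $Z$ has corners $Z_0$, $Z_3$, $Z_1$ and the twisted fiber product $Z_1{}^s\times^s_{Z_0}Z_1{}^t\times^t_{Z_0}Z_1$; I would identify this last corner with $map_{\C}(D, Z^W(3))$, using that $Z^W(3)$ of Definition \ref{Def Z3} is precisely the $\C$-internal limit of the diagram built from two copies of $s$ and two copies of $t$, and that $map_{\C}(D,-)$ preserves limit cones. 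Consequently the completeness square for $Z$ is the image under $map_{\C}(D,-)$ of the square
\begin{center}
 \pbsq{W_0}{W_3}{W_1}{Z^W(3)}{}{}{}{}
\end{center}
in $\C$.

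To finish, I would invoke completeness of $W$: by the lemma characterizing complete Segal objects via exactly this pullback square, the displayed square is a pullback in $\C$; since $map_{\C}(D,-)$ preserves homotopy pullbacks, its image is a homotopy pullback of spaces, which is precisely the completeness of $Z$. As $D$ is arbitrary, the recognition principle then yields that $\C_{/W} \to \C$ is a Cartesian fibration.

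The step I expect to be the main obstacle is the identification of corners: one must check carefully that the equivalence $Z_n \simeq map_{\C}(D, W_n)$ is genuinely simplicial, so that the face maps forming the completeness square on the $Z$-side correspond to the operators $d_1d_3, d_0d_3, d_1d_0$ and $s_0d_0, \mathrm{id}, s_0d_1$ of $W$, and that the twisted pullback $Z_1{}^s\times^s_{Z_0}Z_1{}^t\times^t_{Z_0}Z_1$ really is $map_{\C}(D, Z^W(3))$ and not some other reassembly of $W_1$ and $W_0$. This is a matter of unwinding the definitions of $\C_{/X_{\bullet}}$ and of $(i_{\varphi})_*$, but it is where the bookkeeping lives; everything after that is a formal consequence of the fact that mapping spaces preserve (homotopy) limits.
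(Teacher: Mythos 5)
Your proposal is correct and follows essentially the same route as the paper: both deduce the Segal Cartesian property from the preceding theorem and then verify completeness fiberwise via the identification $((\C_{/W})_k \times_{\C} F(0))_0 \simeq map_{\C}(D,W_k)$, using that $map_{\C}(D,-)$ preserves the (homotopy) limits expressing the completeness condition. Your version merely spells out the identification of the completeness square with the image of the pullback square $W_0 \to W_3 \times_{Z^W(3)} W_1$ in more detail than the paper does.
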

 
 \begin{proof}
  From the previous theorem we already know that $\C_{/W}$ is a Segal Cartesian fibration.
  In order to show that it is Cartesian we have to show that every fiber satisfies the completeness condition.
  However, this again follows from the fact that $((\C_{/W})_k \times_{\C} F(0))_0 \simeq map_{\C}(D,W_k)$
  paired with the fact that the $W$ satisfies the completeness condition, which is preserved
  when we move to mapping spaces.
 \end{proof}

 \begin{defone}
  A (Segal) Cartesian fibration is {\it representable} if it is equivalent to a (Segal) Cartesian fibration of the 
  form $\C_{/W}$ for some (complete) Segal object $W$.
 \end{defone}
 
 \begin{corone}
  Let $W,V$ be two CSO in $\C$. Then we have a map 
  $$Map_{\C}(\C_{/W},\C_{/V}) \xrightarrow{ \ \ \simeq \ \ } map_{CSO(\C)}(W,V)$$
  that is a trivial Kan fibration.
 \end{corone}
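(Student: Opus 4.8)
The plan is to deduce this corollary from the Yoneda-type theorem for representable Reedy right fibrations (\cite[Theorem 5.34]{Ra17b}, quoted above) together with the two theorems just established, which say that $\C_{/W}$ and $\C_{/V}$ are Cartesian fibrations whenever $W$ and $V$ are complete Segal objects. The theorem of \cite{Ra17b} gives, for arbitrary simplicial objects $X_\bullet, Y_\bullet$, a trivial Kan fibration
$$
Map_{\C}(\C_{/X_\bullet}, \C_{/Y_\bullet}) \xrightarrow{\ \ \simeq\ \ } map_{\C^{\Delta^{op}}}(X_\bullet, Y_\bullet).
$$
Specializing to $X_\bullet = W$ and $Y_\bullet = V$, the left-hand side is already the object we want. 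So the content of the corollary is that the right-hand side $map_{\C^{\Delta^{op}}}(W,V)$ may be replaced by $map_{CSO(\C)}(W,V)$.

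First I would recall that $CSO(\C)$ is defined (Definition \ref{Def CSS of CSO}) as the \emph{full} subcategory of $\C^{\Delta^{op}}$ spanned by the complete Segal objects. Since $W$ and $V$ are both complete Segal objects, fullness gives a levelwise equivalence
$$
map_{CSO(\C)}(W,V) \xrightarrow{\ \ \simeq\ \ } map_{\C^{\Delta^{op}}}(W,V);
$$
indeed, for a full subcategory the mapping object is computed the same way as in the ambient CSS. Composing the inverse of this equivalence with the trivial Kan fibration from \cite[Theorem 5.34]{Ra17b} yields a map
$$
Map_{\C}(\C_{/W}, \C_{/V}) \xrightarrow{\ \ \simeq\ \ } map_{CSO(\C)}(W,V)
$$
which is an equivalence. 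To promote ``equivalence'' to ``trivial Kan fibration'' one notes that the map from \cite[Theorem 5.34]{Ra17b} is an honest map of spaces that is a trivial Kan fibration, and that the identification $map_{CSO(\C)}(W,V) \simeq map_{\C^{\Delta^{op}}}(W,V)$ can be taken to be an isomorphism of simplicial sets (the full subcategory inclusion is injective on mapping objects), so the composite is again a trivial Kan fibration.

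The only real subtlety — and the step I would be most careful about — is making precise the claim that mapping objects in the full subcategory $CSO(\C)$ agree with those in $\C^{\Delta^{op}}$, since $\C^{\Delta^{op}}$ is a CSS rather than an ordinary category and ``full subcategory'' must be interpreted in that context (as a pullback along a monomorphism of complete Segal spaces on $0$-simplices, with all higher data inherited). Granting that $CSO(\C) \hookrightarrow \C^{\Delta^{op}}$ is a map of CSS which is the inclusion of those components, the mapping spaces between objects in the image are literally the same. With that in hand the corollary is immediate, and no further computation is needed beyond the two representability theorems proved just above and the Yoneda theorem of \cite{Ra17b}.
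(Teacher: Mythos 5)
Your argument is correct and is exactly the one the paper intends: the corollary is stated without proof as an immediate consequence of the Yoneda theorem for representable Reedy right fibrations, with the identification $map_{CSO(\C)}(W,V) \simeq map_{\C^{\Delta^{op}}}(W,V)$ coming from fullness of the subcategory $CSO(\C)$. Your care about the full-subcategory inclusion being literal on mapping spaces is a reasonable point to make explicit, but it adds nothing beyond what the paper takes for granted.
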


 This has following very important corollary
 
 \begin{corone}
  Two CSOs $W$ and $V$ in $\C$ are equivalent in $CSO(\C)$ if and only if the corresponding Cartesian fibrations 
  $\C_{/W}$ and $\C_{/V}$ are equivalent.
 \end{corone}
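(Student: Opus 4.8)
The statement to prove is: two complete Segal objects $W$ and $V$ in $\C$ are equivalent in $CSO(\C)$ if and only if the representable Cartesian fibrations $\C_{/W}$ and $\C_{/V}$ are equivalent (as objects of the Cartesian model structure, or equivalently in the associated higher category). The natural strategy is to deduce this directly from the preceding corollary, which says the comparison map
$$Map_{\C}(\C_{/W},\C_{/V}) \xrightarrow{\ \ \simeq\ \ } map_{CSO(\C)}(W,V)$$
is a trivial Kan fibration, hence in particular an equivalence of spaces. So the plan is: upgrade this level-wise equivalence of mapping spaces to a statement about isomorphism objects, and then apply the general principle that a fully faithful, appropriately essentially surjective functor reflects and preserves equivalences.

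\textbf{Key steps, in order.} First I would record that the assignment $W \mapsto \C_{/W}$ is (the object part of) a functor from $CSO(\C)$ to the higher category of Cartesian fibrations over $\C$ — this follows from the functoriality of the over-category construction $\C_{/X_\bullet}$ used earlier, together with the two theorems showing $\C_{/W}$ is Cartesian when $W$ is complete Segal. Second, the displayed trivial Kan fibration, applied with $V$ and $W$ ranging over all CSOs, says precisely that this functor is \emph{fully faithful}: mapping spaces are preserved. Third, for the ``only if'' direction: an equivalence $W \xrightarrow{\simeq} V$ in $CSO(\C)$ is sent by any functor to an equivalence $\C_{/W} \xrightarrow{\simeq} \C_{/V}$, since functors preserve equivalences. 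Fourth, for the ``if'' direction: suppose $\C_{/W}$ and $\C_{/V}$ are equivalent, so there is an equivalence $\C_{/W} \xrightarrow{\simeq} \C_{/V}$. By full faithfulness this equivalence is in the image of the comparison map, so there is a morphism $g: W \to V$ in $CSO(\C)$ with $\C_{/g}$ an equivalence; likewise a homotopy inverse comes from some $h: V \to W$. Full faithfulness again (applied to $Map(\C_{/W},\C_{/W})$ and $Map(\C_{/V},\C_{/V})$) shows $h \circ g \sim \mathrm{id}_W$ and $g \circ h \sim \mathrm{id}_V$ in $CSO(\C)$, so $g$ is an equivalence of CSOs. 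This is the standard argument that a fully faithful functor reflects equivalences.

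\textbf{Main obstacle.} The genuinely delicate point is that ``equivalence of $\C_{/W}$ and $\C_{/V}$'' must be interpreted in a way compatible with the mapping-space statement — i.e. as equivalence in the homotopy category of the (Segal) Cartesian model structure of Theorem \ref{The Segal Cartesian Model Structure}, between fibrant-cofibrant replacements — and one must know that the comparison $Map_{\C}(\C_{/W},\C_{/V})$ of that corollary really does compute the correct mapping space in that model structure, so that ``the equivalence is in the image'' is legitimate. Granting the setup of the previous corollary (which I may assume), this is bookkeeping: the content is entirely the abstract ``fully faithful functors reflect equivalences'' lemma, and the proof should be short, essentially a paragraph invoking the corollary twice — once to promote a putative equivalence $\C_{/W} \simeq \C_{/V}$ to a morphism of CSOs, and once to check the composites are identities.
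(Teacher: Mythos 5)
Your proposal is correct and follows exactly the route the paper intends: the paper offers no written proof, presenting the statement as an immediate consequence of the preceding corollary (the trivial Kan fibration $Map_{\C}(\C_{/W},\C_{/V}) \to map_{CSO(\C)}(W,V)$), and your argument simply spells out the standard ``fully faithful functors preserve and reflect equivalences'' reasoning that this implication relies on. The details you supply, including the construction of $g$ and $h$ and the check that the composites are homotopic to identities via full faithfulness, are the correct bookkeeping and raise no issues.
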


 Thus we can study complete Segal objects via their Cartesian fibrations. However in 
 Theorem \ref{The Segal Cartesian Model Structure} we showed Cartesian fibrations come with a
 very well behaved model structure and so we can apply tools from model category theory to help.
 
 It is valuable to see how the results about complete Segal objects translate when we look at it from the perspective 
 of Cartesian fibrations.
 

 \begin{defone}
   Let $D$ be an object and $W$ a CSO in $\C$. We define the simplicial space $map_{\C}(D,W)$ as the following pullback diagram
   \begin{center}
    \pbsq{map_{\C}(D,W)}{(i_{\varphi})_*(C_{/W})}{F(0)}{\C}{}{}{}{D}
   \end{center}
 \end{defone}
 
 \begin{notone}
  We will also denote $map_{\C}(D,W)$ as $W_D$ in order to simplify notations.
 \end{notone}
 
 \begin{remone}
  Denoting the fiber by $map_{\C}(D,W)$ is quite reasonable as we do have an equivalence of spaces 
  $$((i_{\varphi})_* F(0) \underset{\C}{\times} \C_{/W})_k \simeq map_{\C}(D,W_k)$$
 \end{remone}

 
 \begin{remone}
  We have now completely justified our previous intuition. The Cartesian fibration $\C_{/W}$ models the functor 
  $$map_{/C}(-,W): \C^{op} \to CSS$$
  that takes each object $D$ to the complete Segal space defined as $map_{\C}(D,W)$.
 \end{remone}
 
 Now that we have established these facts, we can translate between complete Segal spaces and complete Segal objects.
 
 \begin{remone} \label{Rem CSO to CSS}
  An object $x: D \to W$ with context $D$ corresponds to an object in the CSS $map_{\C}(D,W)$
  as it is a point $x \in map_{\C}(D,W)_0$.
  Similarly, a morphism $f: D \to W$ corresponds to a morphism in the CSS $map_{\C}(D,W)$.
  Moreover, a homotopy equivalence $f: D \to W_{hoequiv}$ corresponds to a point in the space $map_{\C}(D,W)_{hoequiv}$. 
  \par
  Extending this argument, for two objects $x,y: D \to W$, we have an equivalence 
  $$map_{\C}(*,map_{W}(x,y)) \xrightarrow{ \ \ \simeq \ \ } map_{W_D}(x,y)$$
 \end{remone}
 
  Thus the study of the complete Segal object $W$ corresponds to the study of the collective complete Segal spaces 
  $W_D$ for all objects $D$.
  Having established a strong, rigorous connection between complete Segal spaces and complete Segal objects, 
  we can now prove higher categorical results about complete Segal objects using what we know about complete Segal spaces.
 
 \subsection{Building Simplicial Objects} \label{Subsec Building Simplicial Objects}
 In this subsection we make use of our correspondence between Segal objects and Segal Cartesian fibrations
 and develop techniques that allow us to build new simplicial objects.
 Then we specialize to the case of Segal objects. 
 
 \begin{theone} \label{The Building Simp Obj}
  Let $X_0, X_1, ...$ be a sequence of objects in $\C$ such that there exists a Reedy right fibration $\R \to \C$ 
  such that $\R_n \simeq \C_{/X_n}$. Then there exists a simplicial object $\hat{X}: \Delta^{op} \to \C$ such that 
  $\hat{X}_n \simeq X_n$.
 \end{theone}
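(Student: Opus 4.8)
The plan is to show that the levelwise hypothesis $\R_n \simeq \C_{/X_n}$ forces $\R$ itself to be a \emph{representable} Reedy right fibration, i.e.\ $\R \simeq \C_{/\hat X_\bullet}$ for some simplicial object $\hat X_\bullet \in \C^{\Delta^{op}}$; comparing $n$-th levels then gives $\C_{/\hat X_n} \simeq \R_n \simeq \C_{/X_n}$, and since $c \mapsto \C_{/c}$ is fully faithful (the Yoneda corollary for right fibrations, which identifies $Map_{\C}(\C_{/c},\C_{/d})$ with $map_{\C}(c,d)$) this yields $\hat X_n \simeq X_n$. So the real task is to produce $\hat X_\bullet$ together with an equivalence $\C_{/\hat X_\bullet} \simeq \R$. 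The two inputs I would use are: (i) the functor $c \mapsto \C_{/c}$ is a fully faithful embedding of $\C$ into the homotopy theory of right fibrations over $\C$ (Yoneda for right fibrations), and moreover sends the finite-limit cones that compute matching objects to limit cones of right fibrations; (ii) the functor $X_\bullet \mapsto \C_{/X_\bullet}$ is a fully faithful embedding of $\C^{\Delta^{op}}$ into the homotopy theory of Reedy right fibrations over $\C$ (Yoneda for Reedy right fibrations). Since a Reedy right fibration is, up to fibrancy, a simplicial diagram $[n]\mapsto \R_n$ of right fibrations over $\C$, and under this identification $\C_{/X_\bullet}$ is the diagram $[n] \mapsto \C_{/X_n}$, it suffices to show that a levelwise-representable Reedy right fibration is representable.

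To build $\hat X_\bullet$ I would argue by induction along the Reedy filtration of $\Delta$, using that $\C$ has all finite limits, hence all matching objects of simplicial objects. Set $\hat X_0 := X_0$, with the given equivalence $\C_{/\hat X_0} \simeq \R_0$. Suppose we have $\hat X_{<n}$ together with an equivalence of restricted diagrams $\C_{/\hat X_{<n}} \simeq \R_{<n}$. The structure map of $\R$ to its $n$-th matching object $M_n(\R) \simeq \C_{/M_n(\hat X_\bullet)}$ (using that $c \mapsto \C_{/c}$ preserves the relevant finite limit), precomposed with the equivalence $\R_n \simeq \C_{/X_n}$, is a map $\C_{/X_n} \to \C_{/M_n(\hat X_\bullet)}$; by the full faithfulness in (i) it is $\C_{/(-)}$ of a unique map $X_n \to M_n(\hat X_\bullet)$ in $\C$. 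Declaring $\hat X_n := X_n$ equipped with this map, together with the degeneracies handled the same way, extends the simplicial object one level while preserving the equivalence with $\R$. Running the induction produces $\hat X_\bullet \in \C^{\Delta^{op}}$ with $\C_{/\hat X_\bullet} \simeq \R$, and hence $\hat X_n \simeq X_n$ as desired.

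The main obstacle is the coherence of this rectification: one must verify that the maps $X_n \to M_n(\hat X_\bullet)$ obtained level by level genuinely assemble, with the degeneracies, into a homotopy-coherent simplicial object, and that the resulting $\C_{/\hat X_\bullet}$ is equivalent to $\R$ as a diagram and not merely levelwise. The full faithfulness of both $\C_{/(-)}$ functors and the limit-preservation of $c \mapsto \C_{/c}$ are exactly what control this bookkeeping. A cleaner packaging, which I would ultimately prefer to write down, is to invoke the general principle that a fully faithful functor $\C \to \mathscr{D}$ induces a fully faithful functor $\C^{\Delta^{op}} \to \mathscr{D}^{\Delta^{op}}$ whose essential image consists precisely of the diagrams landing pointwise in the essential image, and to apply it to $c \mapsto \C_{/c}$ (so that the essential image of $X_\bullet \mapsto \C_{/X_\bullet}$ is exactly the levelwise-representable Reedy right fibrations), of which $\R$ is one by hypothesis.
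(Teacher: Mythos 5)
Your proposal is correct, and its framing---reduce to showing that a levelwise representable Reedy right fibration is itself representable, then compare levels via full faithfulness of $c \mapsto \C_{/c}$---matches what the paper says its own proof is (``a classical Yoneda style argument''). The execution differs, however. The paper does not run a Reedy induction over matching objects: it writes the simplicial object down in one stroke, as a map of simplicial spaces $\hat{X}\colon \Delta^{op} \to \C$, by evaluating a classifying map $q$ of the levelwise representables on each $n$-cell $(f_1,\dots,f_n)$ of $\Delta^{op}$ (the formula $\hat{X}_n(f_1,\dots,f_n) = q_{t(f_n)}(f_1,\dots,f_n)$), and the coherence you worry about is absorbed by the strictness of $\R$ as a fibrant bisimplicial space, so no level-by-level rectification is performed. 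Your inductive construction is workable but, as you yourself note, leaves the homotopy-coherence bookkeeping exposed; your preferred packaging---a fully faithful functor induces a fully faithful functor on $\Delta^{op}$-diagrams whose essential image is the pointwise one, applied to the Yoneda embedding $c \mapsto \C_{/c}$---closes that gap cleanly and is, if anything, more transparent than the paper's rather terse formula. What that route costs is the identification of the homotopy theory of Reedy right fibrations over $\C$ with that of simplicial diagrams of right fibrations over $\C$, which the paper's framework does supply (the levels of $\C_{/X_\bullet}$ are defined levelwise, and the Yoneda lemma for Reedy right fibrations identifies the mapping spaces accordingly), so this is a legitimate and arguably preferable alternative rather than a gap.
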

 
 \begin{proof}
  By assumption $\R$ is a representable Reedy right fibration, which implies that there is a map $(\Delta^{op})_{/ \bullet} \to \R$ 
  over $\C$. We will use the map $q:\Delta^{op}_{/ \bullet} \to \C$ to define our simplicial object. 
  We define $\hat{X}: \Delta^{op} \to \C$ as follows 
  $$\hat{X}_n(f_1,...,f_n) = q_{t(f_n)}(f_1,...,f_n)$$
  Here we used the fact that the $n$-cell $(f_1,...,f_n)$ in $\Delta^{op}$ gives us an $n$-cell $(f_1,...,f_n)$ in $\Delta^{op}_{/ t(f_n)}$.
  The functoriality of the construction follows from the functoriality of $q$.
 \end{proof}
 
 \begin{remone}
  The essence of the proof is a classical Yoneda style argument. The maps have to exist in $\C$ because they exist at the level of the presheaf
  represented by the objects. 
 \end{remone}
 
 \begin{theone} \label{The Rep Segal Cart}
  Let $\C$ be a CSS with finite limits and let $\R \to \C$ be a Segal Cartesian fibration.
  Then $\R$ is representable if and only if $\R_0$ and $\R_1$ are representable.
 \end{theone}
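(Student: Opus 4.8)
The plan is to prove the two implications separately, with the reverse direction carrying essentially all the content.

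\emph{($\Rightarrow$)} If $\R$ is representable, then $\R \simeq \C_{/W}$ for a Segal object $W$. Unwinding the definition of the Reedy right fibration $\C_{/W_\bullet}$, together with the Yoneda lemma for representable Reedy right fibrations, identifies the simplicial spaces $\R_0$ and $\R_1$ with $\C_{/W_0}$ and $\C_{/W_1}$ over $\C$; these are representable right fibrations by definition, so we are done.

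\emph{($\Leftarrow$)} Suppose $\R_0 \simeq \C_{/X_0}$ and $\R_1 \simeq \C_{/X_1}$. First I would extract the structure maps in $\C$: the face and degeneracy maps relating the levels $\R_0$ and $\R_1$ are maps of right fibrations over $\C$, so by the Yoneda lemma for right fibrations they arise from maps $s,t \colon X_1 \to X_0$ and $s_0 \colon X_0 \to X_1$ in $\C$. Since $\C$ has finite limits, for each $n$ I set $X_n := X_1 \times_{X_0} \cdots \times_{X_0} X_1$ ($n$ factors, formed using $t$ on the left and $s$ on the right). The next step is to verify $\R_n \simeq \C_{/X_n}$ for all $n$. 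This uses that $c \mapsto \C_{/c}$ preserves finite limits: over any object $Z$, the right fibration $\C_{/(A \times_B C)}$ has fiber $map_{\C}(Z, A \times_B C) \simeq map_{\C}(Z,A) \times_{map_{\C}(Z,B)} map_{\C}(Z,C)$, which is the fiber of $\C_{/A} \times_{\C_{/B}} \C_{/C}$, and a right fibration is determined by its fibers, so $\C_{/(A \times_B C)} \simeq \C_{/A} \times_{\C_{/B}} \C_{/C}$. Iterating this and invoking the Segal condition for $\R$ (Definition \ref{Def Segal Cartesian Fibration}) gives $\R_n \simeq \R_1 \times_{\R_0} \cdots \times_{\R_0} \R_1 \simeq \C_{/X_1} \times_{\C_{/X_0}} \cdots \times_{\C_{/X_0}} \C_{/X_1} \simeq \C_{/X_n}$, all over $\C$.

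Now the sequence $X_0, X_1, \ldots$ satisfies the hypothesis of Theorem \ref{The Building Simp Obj} — the Reedy right fibration witnessing $\R_n \simeq \C_{/X_n}$ is $\R$ itself — so there is a simplicial object $\hat X \colon \Delta^{op} \to \C$ with $\hat X_n \simeq X_n$. Because, under this identification, the inner face maps of $\hat X$ correspond to the projections of the iterated fiber product (this is where one must track the Yoneda correspondence carefully), $\hat X$ satisfies the Segal condition, i.e.\ it is a Segal object. Finally, the comparison map $\C_{/\hat X} \to \R$ produced by the construction is a level-wise equivalence of Reedy right fibrations; since a level-wise Reedy equivalence is a weak equivalence in the biReedy model structure and both sides are fibrant, it is an equivalence, so $\R \simeq \C_{/\hat X}$ is representable.

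\emph{Main obstacle.} The crux is the middle step: establishing that $c \mapsto \C_{/c}$ is full, faithful, and limit-preserving in the precise form needed, and, relatedly, ensuring the level-wise equivalences $\R_n \simeq \C_{/X_n}$ are sufficiently natural in the simplicial variable both to feed into Theorem \ref{The Building Simp Obj} and to upgrade to a global equivalence $\R \simeq \C_{/\hat X}$. All of this is powered by the Yoneda lemma for (Reedy) right fibrations, and the care required lies in keeping every equivalence over $\C$ and compatible with the simplicial structure maps.
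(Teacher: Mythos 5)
Your proof is correct and follows essentially the same route as the paper: both reduce to showing each $\R_n$ for $n \geq 2$ is representable via the Segal condition $\R_n \simeq \R_1 \times_{\R_0} \cdots \times_{\R_0} \R_1$ and the existence of finite limits in $\C$ (the paper phrases this as the fiber product of representables having a final object, you phrase it as $c \mapsto \C_{/c}$ preserving finite limits — the same fact). You are merely more explicit than the paper about assembling the levelwise identifications into a Segal object and a global equivalence via Theorem \ref{The Building Simp Obj}, a step the paper leaves implicit.
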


 \begin{proof}
  One side is just a special case. So, let us assume $\R_0$ and $\R_1$ are representable right fibrations.
  We have to prove that $\R_n$ is representable for $n \geq 2$, which means we have to show it has a final object. 
  By the Segal condition we have a trivial Reedy fibration 
  $$\R_n \to \R_1 \underset{\R_0}{\times} ...\underset{\R_0}{\times} \R_1$$
  Thus it suffices to prove that the right hand side has a final object. However, by the representability condition we have an equivalence
  $$\R_1 \underset{\R_0}{\times} ...\underset{\R_0}{\times} \R_1 \simeq 
  \C_{/W_1} \underset{\C_{/W_0}}{\times} ...\underset{\C_{/W_0}}{\times} \C_{/W_1}$$
  where $W_1$ represents $\R_1$ and $W_0$ represents $\R_0$. But the right hand CSS has a final object if and only if the 
  induced diagram of 
  $$W_1 \rightarrow W_0 \leftarrow W_1 ... W_1 \rightarrow W_0 \leftarrow W_1$$
  has a limit, which holds as $\C$ has finite limits.
 \end{proof}

 \begin{theone} \label{The Segal Obj out of two obj}
  Let $W_0$ and $W_1$ be two objects in a CSS $\C$ with finite limits.
  Let $\R$ be a Segal Cartesian fibration over $\C$ such that $\R_0$ is represented by $W_0$ and $\R_1$ is represented by 
  $W_1$. Then there exists a Segal object $\hat{W}_{\bullet}$ such that $\hat{W}_0 \simeq W_0$ and $\hat{W}_1 \simeq W_1$. 
  Moreover, $\hat{W}$ is complete if $\R$ is a Cartesian fibration.
 \end{theone}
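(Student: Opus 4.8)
The plan is to build $\hat W_\bullet$ out of the objects that represent the $\R_n$, and then to transport the Segal and completeness conditions from $\R$ to $\hat W$ by a Yoneda-style argument.

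First I would invoke Theorem \ref{The Rep Segal Cart}: since $\R$ is a Segal Cartesian fibration with $\R_0$ and $\R_1$ representable, $\R$ is representable, so every level-wise right fibration $\R_n$ is represented by some object $W_n$ of $\C$, and by the Yoneda lemma for right fibrations (\cite[Theorem 4.2]{Ra17a}) such a representing object is determined up to equivalence — hence we may take $W_0$ and $W_1$ to be the given objects and $W_2, W_3, \dots$ any choices. Feeding the sequence $W_0, W_1, W_2, \dots$ together with $\R$ itself, viewed as a Reedy right fibration with $\R_n \simeq \C_{/W_n}$, into Theorem \ref{The Building Simp Obj} produces a simplicial object $\hat W \colon \Delta^{op} \to \C$ with $\hat W_n \simeq W_n$ for all $n$; in particular $\hat W_0 \simeq W_0$ and $\hat W_1 \simeq W_1$. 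Moreover the construction in the proof of Theorem \ref{The Building Simp Obj}, combined with the Yoneda lemma for Reedy right fibrations (\cite[Theorem 5.34]{Ra17b}), shows that $\hat W$ represents $\R$, i.e.\ $\C_{/\hat W} \simeq \R$ as Reedy right fibrations over $\C$.

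It remains to see that $\hat W$ is a Segal object, and complete when $\R$ is Cartesian. Since $\C_{/\hat W} \simeq \R$ is a Segal Cartesian fibration, the recognition principle \cite[Corollary 7.4]{Ra17b} tells us that for each object $c$ of $\C$ the simplicial space $(i_\varphi)_*(\C_{/\hat W} \times_\C F(0))$ is a Segal space; but at level $n$ this simplicial space is $map_\C(c, \hat W_n)$, so for every $c$ the map
$$ map_\C(c, \hat W_n) \xrightarrow{\ \ \simeq\ \ } map_\C(c, \hat W_1) \underset{map_\C(c,\hat W_0)}{\times} \cdots \underset{map_\C(c,\hat W_0)}{\times} map_\C(c, \hat W_1) $$
is an equivalence of spaces. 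Because a cone in the CSS $\C$ is a limit cone precisely when $map_\C(c,-)$ sends it to a limit cone for all $c$, this says exactly that $\hat W_n \to \hat W_1 \times_{\hat W_0} \cdots \times_{\hat W_0} \hat W_1$ is an equivalence in $\C$, i.e.\ $\hat W$ satisfies Definition \ref{Def Segal Obj}. If in addition $\R$ is a Cartesian fibration, then \cite[Corollary 7.18]{Ra17b} upgrades each $n \mapsto map_\C(c,\hat W_n)$ to a complete Segal space, so its completeness square is a homotopy pullback for every $c$; detecting pullbacks through mapping spaces once more, the square with corners $\hat W_0, \hat W_3, \hat W_1, Z^{\hat W}(3)$ is a pullback in $\C$, which is precisely the stated criterion for $\hat W$ to be complete.

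The parts that need care are the bookkeeping of the simplicial structure maps — one must know that the level-wise equivalences $\hat W_n \simeq W_n \simeq \R_n$ respect face and degeneracy maps, so that the Segal and completeness maps of $\hat W$ genuinely correspond to those of $\R$ — and the claim that $\C_{/\hat W} \simeq \R$ holds as Reedy right fibrations and not merely level-wise, since that is what licenses applying the recognition principle. Both should be formal consequences of the Yoneda lemma for (Reedy) right fibrations from Subsection \ref{Subsec Representable Reedy Right Fibrations}, together with the compatibility of $\C_{/-}$ with the relevant finite limits, so that $\C_{/(W_1 \times_{W_0} \cdots \times_{W_0} W_1)} \simeq \C_{/W_1} \times_{\C_{/W_0}} \cdots \times_{\C_{/W_0}} \C_{/W_1}$; indeed, an alternative to the fiberwise argument above is to transport the Segal and completeness conditions of $\R$ directly through this last identification. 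I expect that identification, and the attendant naturality in the simplicial direction, to be the main obstacle.
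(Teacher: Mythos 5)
Your proposal is correct and follows essentially the same route as the paper: invoke Theorem \ref{The Rep Segal Cart} to get representability of $\R$, feed the representing objects into Theorem \ref{The Building Simp Obj} to produce $\hat{W}$, and then transport the Segal and completeness conditions from $\R$ to $\hat{W}$. The paper's own proof states the last two transport steps without argument, and the justification you supply — the recognition principle via fibers together with detection of limit cones by mapping spaces — is exactly the one implicit in the earlier theorems of Subsection \ref{Subsec Representable Cartesian fibrations out of Complete Segal Objects}, run in reverse.
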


 \begin{proof}
  By Theorem \ref{The Rep Segal Cart} the Segal Cartesian fibration $\R$ is representable.
  Thus, by Theorem \ref{The Building Simp Obj} there exists a simplicial object $\hat{W}$ such that $\hat{W}_0 \simeq W_0$ and
  $\hat{W}_1 \simeq W_1$. Moreover, $\R$ being a Segal Cartesian fibration implies that  $\hat{W}$ is actually a Segal object.
  Finally, $\R$ is a Cartesian fibration if and only if $\hat{W}$ is complete. 
 \end{proof}

 \begin{notone}
  Henceforth we will denote simplicial object $\hat{X}$ with $X$ as well in order to avoid extra notation 
  that can cause extra confusion.
 \end{notone}
 
 

%
%
%

 \begin{remone}
  The idea of this subsection is that we want to built a Segal object in a CSS with finite limits, the same way we build 
  the nerve in ordinary categories, namely by defining the $n$th level to be the pullback 
  $W_1 \times_{W_0} ... \times_{W_0} W_1$. 
  The problem is that in a higher category there are coherence issues which make it very difficult to define 
  all the necessary simplicial maps from between those objects.
  \par
  By using Segal Cartesian fibrations, we can simplify those issues as Segal Cartesian fibrations are fibrant objects in a model structure
  and thus we can get the desired result in an ordinary category.
 \end{remone}

\section{Category Theory of Complete Segal Objects} \label{Sec Category Theory of Complete Segal Objects}
 Now that we have developed a theory of Cartesian fibrations and showed that there is a subclass of 
 Cartesian fibrations completely determined by complete Segal objects, namely representable Cartesian fibrations, 
 we can use this larger framework to develop the category theory of complete Segal objects.
 
 \subsection{The Fundamental Theorem of Complete Segal Objects} \label{Subsec The Fundamental Theorem of Complete Segal Objects}
 One of the very important results of higher category theory is following theorem.
 
 \begin{theone}
  A map of CSS $F:\C_1 \to \C_2$ is an equivalence if and only if it is 
    \begin{enumerate}
     \item Fully Faithful: For any two objects $x,y$ in $\C_1$ the induced map 
     $$map_{\C_1}(x_1,x_2) \to map_{\C_2}(Fx_1,Fx_2)$$
     is an equivalence of spaces.
     \item Essentially Surjective: For any object $y$ in $\C_2$ there exists an object $x$ in $\C_1$ such that 
     $Fx$ is equivalent to $y$.
    \end{enumerate}
 \end{theone}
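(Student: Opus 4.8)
The plan is to prove the two directions separately; the forward implication is formal, and essentially all the content is in the converse. For the forward direction, observe that an equivalence of CSS, being a weak equivalence between fibrant objects in the complete Segal space model structure, is in particular a Reedy (level-wise) equivalence by Theorem \ref{The Complete Segal Space Model Structure}; hence $F_0$ and $F_1$ are equivalences of spaces. Full faithfulness is then immediate, since $map_{\C_i}(x,y)$ is the homotopy fiber of the Reedy fibration $(\C_i)_1 \to (\C_i)_0 \times (\C_i)_0$ over $(x,y)$, and a map of fibrations that is an equivalence on base and total space induces an equivalence on every fiber. Essential surjectivity follows because $\pi_0 F_0$ is then a bijection and, by the completeness condition (Definition \ref{Def Complete Segal Spaces}), $\pi_0(\C_i)_0$ is precisely the set of isomorphism classes of objects of the homotopy category of $\C_i$, see \cite{Re01}.

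For the converse, suppose $F$ is fully faithful and essentially surjective; it suffices to show $F$ is a Reedy equivalence, since between Segal spaces this is the same as a weak equivalence in the complete Segal space model structure (Theorem \ref{The Complete Segal Space Model Structure}). By the Segal condition (Definition \ref{Def Segal Spaces}), each $(\C_i)_n$ for $n \geq 2$ is a homotopy limit of a zig-zag assembled from copies of $(\C_i)_0$ and $(\C_i)_1$ --- Reedy fibrancy is what turns the iterated strict pullback into a homotopy pullback --- so it is enough to prove that $F_0$ and $F_1$ are equivalences. Assuming $F_0$ is an equivalence, the case $n=1$ is the same fiberwise argument as before: in the commuting square
\begin{center}
\begin{tikzcd}[row sep=0.4in, column sep=0.5in]
(\C_1)_1 \arrow[r, "F_1"] \arrow[d] & (\C_2)_1 \arrow[d] \\
(\C_1)_0 \times (\C_1)_0 \arrow[r, "F_0 \times F_0"] & (\C_2)_0 \times (\C_2)_0
\end{tikzcd}
\end{center}
the vertical maps are Reedy fibrations, the bottom map is an equivalence, and the map of fibers over $(x,y)$ is $map_{\C_1}(x,y) \to map_{\C_2}(Fx,Fy)$, an equivalence by full faithfulness; hence $F_1$ is an equivalence.

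The heart of the proof, and the step I expect to be the main obstacle, is showing that $F_0 : (\C_1)_0 \to (\C_2)_0$ is an equivalence: full faithfulness is a condition on mapping spaces and tells us nothing directly about the space of objects, and closing this gap is exactly the role of the completeness condition. On $\pi_0$, the induced functor on homotopy categories is fully faithful (its hom-sets being the $\pi_0$ of the mapping spaces), hence reflects isomorphisms, and together with essential surjectivity this forces $\pi_0 F_0$ to be a bijection --- using once more that $\pi_0(\C_i)_0$ classifies objects up to isomorphism in the homotopy category. On higher homotopy, completeness gives an equivalence $s_0 : (\C_i)_0 \xrightarrow{\ \simeq \ } (\C_i)_{hoequiv}$, and since $s\circ s_0 = t\circ s_0 = \mathrm{id}$ the map $(s,t) : (\C_i)_{hoequiv} \to (\C_i)_0 \times (\C_i)_0$ becomes the diagonal under this equivalence; taking homotopy fibers over $(x,x)$ yields $\Omega_x(\C_i)_0 \simeq hoequiv_{\C_i}(x,x)$, the union of the path components of $map_{\C_i}(x,x)$ consisting of homotopy equivalences. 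Since $F$ preserves homotopy equivalences and, via the fully faithful functor on homotopy categories, also reflects them, the equivalence $map_{\C_1}(x,x) \xrightarrow{\ \simeq \ } map_{\C_2}(Fx,Fx)$ restricts to an equivalence $hoequiv_{\C_1}(x,x) \xrightarrow{\ \simeq \ } hoequiv_{\C_2}(Fx,Fx)$; thus $\Omega_x F_0$ is an equivalence for every basepoint $x$, and combined with bijectivity on $\pi_0$ this shows $F_0$ is an equivalence, finishing the argument.

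One could instead simply invoke the known characterization of equivalences of complete Segal spaces from \cite{Re01}, but the self-contained argument above is the one we will want: replacing spaces by a general ambient CSS $\C$ and the mapping spaces by the mapping objects $map_T(x,y)$ and $hoequiv_T(x,y)$ of Section \ref{Sec Complete Segal Objects}, it is precisely the template for the Fundamental Theorem of Complete Segal Objects.
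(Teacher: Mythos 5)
Your proof is correct, but it is worth saying up front that the paper itself does not prove this statement at all: it records it as a known result, the ``Fundamental Theorem of Quasi-Categories'' of \cite[Page 71]{Re16}, and the relevant proof in the literature is Rezk's \cite[Theorem 7.7]{Re01}. What you have written is essentially a faithful reconstruction of that argument, and every step checks out: the forward direction via item (4) of Theorem \ref{The Complete Segal Space Model Structure} and the fiber sequence $map(x,y) \to \C_1 \to \C_0 \times \C_0$; the reduction of the converse to $F_0$ and $F_1$ via the Segal condition and Reedy fibrancy; the five-lemma argument for $F_1$ over the equivalence $F_0 \times F_0$ (which works because $F_0$ being surjective on $\pi_0$ means every component of the base is covered by a fiber of the form $map_{\C_1}(x,y) \to map_{\C_2}(Fx,Fy)$); and, crucially, the use of completeness to identify $\Omega_x (\C_i)_0$ with $hoequiv_{\C_i}(x,x)$ by comparing the homotopy fiber of the diagonal with the homotopy fiber of $(s,t)$ restricted to the equivalence components, together with the observation that a fully faithful functor reflects isomorphisms in the homotopy category and hence restricts to an equivalence on those components. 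The one thing each approach ``buys'': the citation keeps the paper short, while your self-contained argument exposes exactly where completeness enters, which is the conceptually important point. Your closing remark slightly misdescribes how the paper then uses this theorem, though: the proof of the Fundamental Theorem of Complete Segal Objects in Subsection \ref{Subsec The Fundamental Theorem of Complete Segal Objects} does not internalize your argument to the ambient category $\C$; it reduces to the present theorem applied fiberwise to the complete Segal spaces $W_D$ via the representable Cartesian fibration $\C_{/W}$, using the equivalence $map_{\C}(*,map_W(x,y)) \simeq map_{W_D}(x,y)$ of Remark \ref{Rem CSO to CSS}. So the CSS-level theorem is used as a black box there, not as a template.
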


 This is called the ``Fundamental Theorem of Quasi-Categories" in \cite[Page 71]{Re16}. We want to adopt that result 
 to the setting of complete Segal objects, which we appropriately call the 
 {\it Fundamental Theorem of complete Segal objects}.
 
 \begin{theone}
  Let $\C$ be locally Cartesian closed. 
  A map of CSO $F:W \to V$ in $\C$ is an equivalence if and only if it satisfies following two conditions.
  \begin{enumerate}
   \item Fully Faithful: For two object $x_1,x_2: D \to W_0$ with context $D$ the induced map 
   $$map_W(x_1,x_2) \to map_V(Fx_1,Fx_2)$$
   is an equivalence in $\C$.
   \item Essentially Surjective: For any object $y: D \to V_0$ with context $D$, there exists an object $x: D \to W_0$ with context $D$
   such that $Fx$ is equivalent to $y$ in $V$.
  \end{enumerate}
 \end{theone}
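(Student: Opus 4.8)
The plan is to reduce the statement to the classical Fundamental Theorem of quasi-categories (stated just above) by passing to representable Cartesian fibrations, and more concretely to the associated complete Segal spaces $W_D = \map_{\C}(D,W)$ for each object $D$ in $\C$. Recall from Subsection \ref{Subsec Representable Cartesian fibrations out of Complete Segal Objects} that $F:W\to V$ is an equivalence in $CSO(\C)$ if and only if the induced map $\C_{/W}\to\C_{/V}$ of Cartesian fibrations is an equivalence, and by the model structure of Theorem \ref{The Segal Cartesian Model Structure} together with the recognition principle, this holds if and only if the map of complete Segal spaces $W_D\to V_D$ is an equivalence for every object $D$ in $\C$. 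So the whole proof is: translate ``fully faithful'' and ``essentially surjective'' for $F$ into the corresponding conditions for each $W_D\to V_D$, then invoke the CSS version.

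The first half (necessity) is routine: if $F$ is an equivalence then each $W_D\to V_D$ is an equivalence of CSS, hence fully faithful and essentially surjective in the CSS sense, and then unwinding Remark \ref{Rem CSO to CSS} and the equivalence $\map_{\C}(*,\map_W(x_1,x_2))\simeq \map_{W_D}(x_1,x_2)$ gives the two stated conditions for $F$. For the converse, suppose $F$ is fully faithful and essentially surjective in the sense of the statement; I would fix an object $D$ and show $W_D\to V_D$ satisfies the two CSS-conditions. For fully faithfulness of $W_D\to V_D$: objects of $W_D$ are exactly maps $x\colon D\to W_0$ (i.e. objects of $W$ with context $D$) via $\mathrm{Ob}(W_D)\simeq \map_{\C}(D,W_0)$, and $\map_{W_D}(x_1,x_2)\simeq \map_{\C}(*,\map_W(x_1,x_2))$; since $F$ fully faithful gives $\map_W(x_1,x_2)\xrightarrow{\simeq}\map_V(Fx_1,Fx_2)$ in $\C$, applying $\map_{\C}(*,-)$ yields the required equivalence of mapping spaces in $W_D$. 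For essential surjectivity of $W_D\to V_D$: an object of $V_D$ is a $y\colon D\to V_0$, and essential surjectivity of $F$ with context $D$ hands us an $x\colon D\to W_0$ with $Fx$ equivalent to $y$ in $V$ — but ``equivalent in $V$'' means precisely connected by a homotopy equivalence in the sense of Definition \ref{Def Homotopy Equiv Comp}/the object $V_{hoequiv}$, which by Remark \ref{Rem CSO to CSS} corresponds to being in the same component of $(V_D)_{hoequiv}$; completeness of $V$ then identifies this with being equivalent in the CSS $V_D$ in the homotopy-categorical sense. Hence $W_D\to V_D$ is fully faithful and essentially surjective, so an equivalence of CSS, for every $D$; therefore $\C_{/W}\to\C_{/V}$ is an equivalence, so $F$ is an equivalence in $CSO(\C)$.

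The main obstacle I expect is the careful handling of \emph{contexts} in the essential surjectivity step. In the CSS Fundamental Theorem, essential surjectivity is checked on honest points (objects with no context, $*\to\C_2$), whereas here we must match it up with essential surjectivity ``fibrewise over every $D$''. The point to get right is that an object of the CSS $V_D$ is not an object of $V$ with no context but an object with context $D$, and that ``$Fx\simeq y$ in $V$'' (the hypothesis) must be shown equivalent to ``$Fx\simeq y$ in the homotopy category of $V_D$'' — this is exactly where completeness of $V$ is used (via the equivalence $V_0\xrightarrow{\simeq}V_{hoequiv}$ and its compatibility with $\map_{\C}(D,-)$), since completeness is what forces the two a priori different notions of equivalence between objects to coincide. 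One should also check the harmless point that essential surjectivity need only be verified ``context by context'' rather than globally, which follows since a map of CSS is an equivalence iff it is so, and we are quantifying over all $D$ anyway. Modulo these bookkeeping points, everything reduces cleanly to the already-established correspondence between $W$ and the family $\{W_D\}_D$ and to Theorem (Fundamental Theorem of Quasi-Categories).
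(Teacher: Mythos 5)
Your proposal is correct and follows essentially the same route as the paper: reduce to showing the map of representable Cartesian fibrations $\C_{/W} \to \C_{/V}$ is an equivalence, check this fiberwise on the complete Segal spaces $W_D \to V_D$, and translate fully faithfulness and essential surjectivity through Remark \ref{Rem CSO to CSS} so that the CSS Fundamental Theorem applies. Your extra care about where completeness of $V$ enters in identifying the two notions of ``equivalent objects'' is a point the paper passes over silently, but it does not change the structure of the argument.
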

 
 \begin{proof}
  It suffices to prove that the corresponding map $F: \C_{/W} \to \C_{/V}$ is a equivalence of Cartesian fibrations.
  For that it suffices to prove that the map of fibers $F_D: W_D \to V_D$ is an equivalence.
  As the fibers are CSS, it suffices to prove that the map is fully faithful and essentially surjective.
  \par 
  Let $x,y$ be two objects in $W_D$, which are just maps $x,y:D \to W_0$. This gives us following commutative diagram 
  \begin{center}
   \begin{tikzcd}[row sep=0.5in, column sep=0.5in]
    map_{\C}(*, map_W(x,y)) \arrow[r, "\simeq"] \arrow[d, "\simeq"] & map_{W_D}(x,y) \arrow[d] \\
    map_{\C}(*, map_V(Fx,Fy)) \arrow[r, "\simeq"] & map_{V_D} (Fx,Fy)
   \end{tikzcd}
  \end{center}
  The horizontal maps are equivalences by definition (see Remark \ref{Rem CSO to CSS}).
  The left hand vertical map is an equivalence by assumption. This implies that the right hand vertical map is also an equivalence.
  which is exactly what we wanted to show.
  \par 
  For the next part let $y$ be an object in $V_D$, which means it is a map $y: D \to V$. By the assumption of the theorem 
  there exists an $x : D \to W$ such that $Fx: D \to V$ is equivalent to $y$, which means there exists an object $x$ in $W_D$, 
  such that $Fx$ is equivalent to $y$ in $V_D$. Hence, $F_D$ is also essentially surjective.
 \end{proof}

 \begin{remone}
  From the perspective of this proof we can see why it would not have sufficed to consider an object in a complete Segal object 
  $W$ to be a map out of the final object $* \to W_0$ (Definition \ref{Def Obj of Seg Obj}). 
  We do need to consider objects with different contexts to be able to 
  understand equivalences of complete Segal object in terms of the corresponding complete Segal spaces.
 \end{remone}

 \subsection{Adjunction of Complete Segal Objects} \label{Subsec Adjunctions of Complete Segal Objects}
 In this subsection we use the definition of adjunction for compete Segal spaces to define an adjunction 
 of complete Segal objects.
 
 \begin{defone}
  Let $W$ and $V$ be two CSO. An adjunction of CSO is a Cartesian fibration $p:\A \to \C \times F(1)$
  that satisfies the following two conditions:
  \begin{enumerate}
   \item 
  We have following pullback diagram.
  \begin{center}
   \begin{tikzcd}[row sep=0.5in, column sep=0.5in]
    \C_{/W} \arrow[dr, phantom, "\ulcorner", very near start] \arrow[d] \arrow[r] & \A \arrow[d] & \C_{/V} \arrow[l] \arrow[d] 
    \arrow[dl, phantom, "\urcorner", very near start]\\
    \C \arrow[r, "id_{\C} \times 0"] & \C \times F(1) & \C \arrow[l, "id_{\C} \times 1"'] 
   \end{tikzcd}
  \end{center}
  \item 
  For each map $f:F(1) \to \C$  
  \begin{center}
   \pbsq{\A \underset{F(1) \times \C}{\times} F(1)}{\A}{F(1)}{F(1) \times \C}{}{(id \comma f)^*p}{p}{(id \comma f)}
  \end{center}
  the induced map $(id \comma f)^*p$ is a coCartesian fibration.
  \end{enumerate}
 \end{defone}
 
 \begin{remone}
  Note that Cartesian fibrations are stable under pullbacks and so $(id \comma f)^*p$ will also be a Cartesian fibration.
  Thus we could have replaced $(2)$ with the following condition:
  \begin{itemize}
   \item[(2')] For each map $f:F(1) \to \C$  
  \begin{center}
   \pbsq{\A \underset{F(1) \times \C}{\times} F(1)}{\A}{F(1)}{F(1) \times \C}{}{(id \comma f)^*p}{p}{(id \comma f)}
  \end{center}
  the induced map $(id \comma f)^*p$ is an adjunction.
  \end{itemize}
 \end{remone}

 An adjunction of CSO is closely related to adjunction of CSS.
 
 \begin{theone}
  A map $\A \to \C \times F(1)$ is an adjunction of CSO $W$ and $V$ if and only if the map $\A \to \C \times F(1)$ 
  is a CSS fibration and for each object $D$ 
  the map of fibers $\A_D \to F(1)$ is an adjunction of CSS between $W_D$ and $V_D$.
 \end{theone}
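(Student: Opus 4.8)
The plan is to reduce both directions to the behaviour of $\A\to\C\times F(1)$ on the fibers over the objects of $\C$, using two ingredients developed above. The first is the recognition principle for (Segal) Cartesian fibrations of \cite{Ra17b}: a Reedy right fibration over $\C$ is a Cartesian fibration precisely when each of its fibers over an object is a complete Segal space. The second is the representability correspondence of Subsection~\ref{Subsec Representable Cartesian fibrations out of Complete Segal Objects}: the fiber of $\C_{/W}\to\C$ over an object $D$ is the complete Segal space $W_D$, and a representable Cartesian fibration over $\C$ is determined up to equivalence by these fibers (the Yoneda-type rigidity from that subsection). For an object $D$ I write $\A_D\to F(1)$ for the fiber of $\A$ over $D$, formed by the same $(i_\varphi)_*$-and-pullback recipe that produces $W_D$ out of $\C_{/W}$.

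For the forward implication, I would assume $p\colon\A\to\C\times F(1)$ is an adjunction of CSO. Being a Cartesian fibration, it is in particular fibrant in the Cartesian model structure of Theorem~\ref{The Segal Cartesian Model Structure}, which immediately makes $\A\to\C\times F(1)$ a CSS fibration. For a fixed $D$, pulling back $p$ along the constant edge $c_D\colon F(1)\to\C$ recovers $\A_D\to F(1)$, which is an adjunction of CSS by condition~(2) (or its reformulation~(2')); condition~(1) identifies its fiber over $0$ with the fiber $W_D$ of $\C_{/W}\to\C$ and its fiber over $1$ with $V_D$, so it is an adjunction of CSS between $W_D$ and $V_D$. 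Together with Remark~\ref{Rem CSO to CSS}, used to match objects and morphisms across the two pictures, this direction is essentially bookkeeping.

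Conversely, I would assume $\A\to\C\times F(1)$ is a CSS fibration and each $\A_D\to F(1)$ is an adjunction of CSS between $W_D$ and $V_D$. The fiber of $\A$ over an object $(D,\varepsilon)$ of $\C\times F(1)$ is the fiber of $\A_D\to F(1)$ over $\varepsilon$, hence $W_D$ or $V_D$, a complete Segal space, so the recognition principle makes $\A\to\C\times F(1)$ a Cartesian fibration. Its restriction $\A|_{\C\times\{0\}}\to\C$ is then a Cartesian fibration whose fiber over each $D$ is $W_D$, hence a representable Cartesian fibration with the same fibers as $\C_{/W}$; by the uniqueness statement of Subsection~\ref{Subsec Representable Cartesian fibrations out of Complete Segal Objects} it is equivalent to $\C_{/W}$ over $\C$, and likewise $\A|_{\C\times\{1\}}\simeq\C_{/V}$, which produces the pullback square of condition~(1). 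It then remains to verify condition~(2): for each $f\colon F(1)\to\C$ and each object $x$ of the fiber $W_{f(0)}$ of $(id,f)^*\A$ over $0$ one must produce a coCartesian lift of the edge $0\to1$; factoring the image edge $(f(0),0)\to(f(1),1)$ of $\C\times F(1)$ through $(f(0),0)\to(f(0),1)\to(f(1),1)$, a coCartesian lift of the first leg lives in the adjunction of CSS $\A_{f(0)}\to F(1)$, and one assembles it with a lift of the second leg coming from the global CSS-fibration structure of $\A$, using that composites of coCartesian edges are coCartesian.

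The main obstacle is this last step: promoting the fiberwise data to condition~(2) for a non-constant $f$. The fiberwise adjunctions only furnish coCartesian lifts along the edges of $F(1)$ lying over a single object of $\C$, and upgrading these to coCartesian lifts along the mixed edges of $\C\times F(1)$ is exactly where one must use that $\A$ is globally a CSS fibration, not merely fiberwise well-behaved, so that the lifts built inside the various $\A_D$ transport and compose coherently. One should also check that the literal pullback square of condition~(1) agrees naturally in $D$ with the fiberwise identifications $\A_{(D,0)}\simeq W_D$ and $\A_{(D,1)}\simeq V_D$; as before, this is handled by the rigidity of representable Cartesian fibrations.
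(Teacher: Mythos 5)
Your forward direction coincides with the paper's: pull back along the degenerate edge at an object $D$, observe that the result is simultaneously Cartesian and coCartesian over $F(1)$, and read off the adjunction between $W_D$ and $V_D$ from condition (1). For the converse, your route to Cartesian-ness of $\A \to \C\times F(1)$ differs from the paper's: you invoke the recognition principle (all fibers are complete Segal spaces), whereas the paper exhibits Cartesian lifts directly by a three-case analysis of the edges of $\C\times F(1)$ (edges in $\C\times\{0\}$, edges in $\C\times\{1\}$, and mixed edges). Both are reasonable; yours is shorter but leans on the undefined term ``CSS fibration'' to supply the Reedy right fibration hypothesis that the recognition principle needs. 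Your reconstruction of the pullback square of condition (1) via representability is more than the paper does --- its converse never verifies condition (1) --- and, as you note yourself, it requires the fiberwise identifications $\A_{(D,0)}\simeq W_D$ to be natural in $D$ before the representability theorem can be applied.

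The genuine gap is exactly the step you flagged: verifying condition (2) for a non-degenerate edge $f\colon D_1\to D_2$. Your plan is to factor $(f,0\to 1)$ as $(D_1,0)\to(D_1,1)\to(D_2,1)$, take the coCartesian lift $x\to Lx$ of the first leg inside the adjunction $\A_{D_1}\to F(1)$, and then continue over the second leg using the global fibration structure. That second step does not exist: over $\C\times\{1\}$ the fibration is $\C_{/V}\to\C$, a \emph{Cartesian} fibration, whose distinguished lifts of $f$ are determined by their target in $V_{D_2}$, not their source, so there is no canonical edge out of $Lx\in V_{D_1}$ lying over $f$. In functor language, condition (2) for $f$ asks that the composite $V_{D_2}\xrightarrow{f^*}V_{D_1}\xrightarrow{R_{D_1}}W_{D_1}$ admit a left adjoint, and the fiberwise adjoints $L_D\dashv R_D$ do not produce one unless the restriction functor $f^*$ itself has a left adjoint. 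Be aware that the paper's own proof does not close this gap either: it silently replaces condition (2) by its instances at degenerate edges (``for each object $D$ the fiber $\A_D\to F(1)$ is a coCartesian fibration'') and never returns to non-degenerate $f$. So either condition (2) is to be read as quantifying only over objects of $\C$ --- in which case your argument (and the paper's) closes once the naturality point above is settled --- or an additional hypothesis or argument is needed for general $f$; as written, your composition-of-lifts step would fail.
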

 
 \begin{proof}
  Let $p: \A \to \C \times F(1)$ is adjunction of CSO and $D: F(0) \to \C$ an object in $\C$.
  Pulling along $D$ we get 
  $\A_D \to F(1)$, which is by assumption is a coCartesian and Cartesian fibration over $F(1)$. Thus we get an adjunction from 
  $W_D$ to $V_D$.
  \par 
  On the other side, we have to prove that $\A \to \C \times F(1)$ is a Cartesian fibration such that for each object $D$
  the fiber is $\A_D \to F(1)$ is a coCartesian fibration.
  The fact that $\A_D \to F(1)$ is a coCartesian fibration follows from the fact that it is an adjunction of CSS.
  Thus we only have to prove that $\A \to \C \times F(1)$ is a Cartesian fibration.
  \par 
  Let $F(1) \to \C \times F(1)$ be a map , which gives us a map $f:D_1 \to D_2$ in $\C$ and a map $F(1) \to F(1)$.
  We have to show that every such map has a choice of Cartesian lift. There are three cases:
  \begin{enumerate}
   \item The map $F(1) \to F(1)$ is the constant map to $0$. The fiber over $\C \times F(0)$ is just the Cartesian fibration $\C_{/W} \to \C$.
   In this case, any choice of lift of the target to an object $D_2 \to W_0$ has a Cartesian lift $D_1 \to D_2 \to W_0$.
   \item The map $F(1) \to F(1)$ is the constant map to $1$. This case is similar to the previous case as the fiber is just $\C_{/V}$.
   \item The map $F(1) \to F(1)$ is the identity map. In this case the fiber over $F(1)$ is $\A_D \to F(1)$, 
   which is a Cartesian fibration by assumption.
  \end{enumerate}
%
%
%
 \end{proof}

 \subsection{Limits of Complete Segal Objects} \label{Subsec Limits of Complete Segal Objects}
 In this subsection we use representable Cartesian fibrations to define limits of complete Segal objects.
 
 For this subsection let $W$ be a complete Segal object in $\C$ and $I$ be any simplicial object in $\C$.
 
 \begin{defone}
  We define the  $(\C_{/W})^I$ as the internal mapping object
  $$(\C_{/W})^I = (\C_{/W} \to \C)^{(\C_{/I} \to \C)}$$
  Notice the map $(\C_{/W})^I \to \C$ is a Cartesian fibration as the fiber over the object $D$ is equivalent to the CSS $(W_D)^{I_D}$.
 \end{defone}
 
 \begin{defone}
  There is a map 
  $$\Delta_W: \C_{/W} \to (\C_{/W})^I$$
  over $\C$ that is induced by the map $\C_{/I} \to \C$ and the fact that the internal mapping object 
  $(\C_{/W} \to \C)^{(id:_{/C}:\C \to \C)} = \C_{/W}$.
 \end{defone}

 \begin{defone}
  A CSO $W$ has limits of shape $I$ if the map $\Delta_W: \C_{/W} \to (\C_{/W})^I$ has a right adjoint and has a colimits of shape $I$
  if the map has a left adjoint.
 \end{defone}
 
 \begin{exone}
  Let $I = *$ be the final object in $\C$. Then the map $\Delta_W: \C_{/W} \to (\C_{/W})^I$ is just the projection map 
  $\C_{/W} \to \C$. In this case any left adjoint gives us an initial object in $W$ and a right adjoint gives us a
  final object in $W$.
 \end{exone}
 
 \begin{exone}
  For any $n$ let $\Delta_n: W \to W^n$ be the diagonal map in $\C$. This gives us a map $\C_{/W} \to \C_{/W^n}$.
  A left adjoint to this map gives us finite coproducts and the right adjoint gives us finite products.
 \end{exone}
 
 We have following representability theorem for limits and colimits.
 
 \begin{theone} \label{The CSS limits to CSO limits}
 $W$ has (co)limits of shape $I$ if and only if for each object $D$ the CSS $W_D$ has (co)limits of shape $I_D$.
 \end{theone}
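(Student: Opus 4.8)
The plan is to exploit the adjunction-transfer theorem proved just above, which says that a map $\A \to \C \times F(1)$ is an adjunction of CSO if and only if it is a CSS fibration whose fiber over every object $D$ is an adjunction of CSS. Since $W$ has (co)limits of shape $I$ precisely when $\Delta_W : \C_{/W} \to (\C_{/W})^I$ admits a right (left) adjoint, the statement reduces to understanding when the fiber $(\Delta_W)_D$ over an object $D$ is the diagonal $\Delta_{W_D} : W_D \to (W_D)^{I_D}$.

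First I would identify the fiber of $(\C_{/W})^I \to \C$ over $D$ with $(W_D)^{I_D}$; this is recorded already in the definition of $(\C_{/W})^I$, where the fiber over $D$ is said to be equivalent to the CSS $(W_D)^{I_D}$. Next I would check that under this identification the map $(\Delta_W)_D$ is exactly the diagonal $W_D \to (W_D)^{I_D}$. This is essentially formal: the diagonal $\Delta_W$ is defined via the collapse $\C_{/I} \to \C$, and restricting the internal-hom construction to the fiber over $D$ turns this collapse into $I_D \to *$, whose induced map on $(W_D)^{(-)}$ is the diagonal. So the fiber of $\Delta_W$ over $D$ is the diagonal of the CSS $W_D$ indexed by $I_D$.

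With that identification in hand, the equivalence is a direct application of the adjunction-transfer theorem. If $W$ has limits of shape $I$, then $\Delta_W$ has a right adjoint, so there is an adjunction $\A \to \C \times F(1)$ with the two ends pulling back to $\C_{/W}$ and $(\C_{/W})^I$; by the transfer theorem its fiber $\A_D \to F(1)$ is an adjunction of CSS whose ends are $W_D$ and $(W_D)^{I_D}$ with left leg $(\Delta_W)_D = \Delta_{W_D}$, hence $W_D$ has limits of shape $I_D$. Conversely, if each $W_D$ has limits of shape $I_D$, one assembles the fiberwise right adjoints into an adjunction of CSO: one builds $\A \to \C \times F(1)$ as the Cartesian fibration whose restriction to $\C \times 0$ is $(\C_{/W})^I$, whose restriction to $\C \times 1$ is $\C_{/W}$, and which over each $F(1) \hookrightarrow \C \times F(1)$ restricts to the adjunction $\A_D \to F(1)$; the transfer theorem then guarantees this is genuinely an adjunction of CSO, so $\Delta_W$ has a right adjoint and $W$ has limits of shape $I$. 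The colimit case is the same with left adjoints.

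**Main obstacle.** The only nontrivial point is the construction in the converse direction: one needs the fiberwise adjoints of CSS to glue into a single Cartesian fibration over $\C \times F(1)$ whose fibers are the prescribed adjunctions. I expect this to follow from the model structure on Cartesian fibrations (Theorem \ref{The Segal Cartesian Model Structure}) together with the recognition principle — one forms the putative $\A$ as a suitable bisimplicial space over $\C \times F(1)$ and checks the Cartesian-fibration conditions fiberwise, exactly as in the proof of the adjunction-transfer theorem — but spelling out the gluing is where the real work lies; the rest is bookkeeping with the identification of fibers.
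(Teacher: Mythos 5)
Your proposal is correct and takes essentially the same route as the paper: identify the fiber of $(\C_{/W})^I \to \C$ over $D$ with $(W_D)^{I_D}$, observe that $\Delta_W$ restricts to the diagonal $(\Delta_W)_D: W_D \to (W_D)^{I_D}$ on fibers, and reduce the existence of a right (left) adjoint for $\Delta_W$ to the fiberwise adjunction criterion of the preceding subsection. If anything you are more careful than the paper, which simply asserts the equivalence between $\Delta_W$ having an adjoint and each $(\Delta_W)_D$ having one, without addressing the gluing step you flag in the converse direction.
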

 
 \begin{proof}
  The Cartesian fibration $(\C_{/W})^I$ has fiber over $D$ equivalent to $(W_D)^{I_D}$ and the map 
  $\Delta_W: \C_{/W} \to (\C_{/W})^I$ gives us a map $(\Delta_W)_D: W_D \to (W_D)^{I_D}$.
  Now $W$ has (co)limits of shape $I$ if and only if the map $\Delta_W$ has right (left) adjoint, 
  which is equivalent to $(\Delta_W)_D$ having right (left) adjoint. Finally, this is equivalent to $W_D$ having (co)limits
  of shape $I_D$.
 \end{proof} 

 \begin{remone}
  This theorem is telling us that a limit cone in $W$ is a choice of limit cone in $W_D$  that changes functorially 
  with the object $D$. For example if $W$ has a final object $\C \to \C_{/W}$ then for any map $f: D \to D'$ in $\C$ 
  the induced map $F_D: W_D \to W_{D'}$ will preserve the final object.
 \end{remone}
 
 There is a different way to define limits for a complete Segal objects. 
 
 \begin{defone} \label{Def Final Object in CSO}
  An object $f$ in $W$ without context is final if in the following pullback square
  \begin{center}
   \pbsq{(W_{/f})_0}{W_1}{W_0 \times *}{W_0 \times W_0}{}{}{(s \comma t)}{id \times f}
  \end{center}
 the induced map $(W_{/f})_0 \to W_0$ is an equivalence in $\C$.
 \end{defone}
 
 There is following lemma to identify final objects in $W$.
 
 \begin{lemone}
  An object $* \to W$ is final if and only if the induced object in $W_D$ is final for every $D$.
 \end{lemone}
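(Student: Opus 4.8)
The plan is to translate the statement about final objects in $W$ into a statement about final objects in the complete Segal spaces $W_D$, using the machinery of representable Cartesian fibrations built up in the previous sections. The key observation is that the pullback defining $(W_{/f})_0$ in Definition \ref{Def Final Object in CSO} is a finite limit in $\C$, so it is preserved by the mapping space functor $map_\C(D,-)$; hence $map_\C(D,(W_{/f})_0)$ is the corresponding pullback in spaces, which is exactly the space of objects of the slice $(W_D)_{/f_D}$, where $f_D \colon * \to (W_D)_0$ is the object of $W_D$ induced by $f$. So I would first spell out that identification.

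First I would note that, by Remark \ref{Rem CSO to CSS} and the constructions in Subsection \ref{Subsec Representable Cartesian fibrations out of Complete Segal Objects}, applying $map_\C(D,-)$ to the pullback square in Definition \ref{Def Final Object in CSO} yields a pullback square of spaces
\begin{center}
 \pbsq{map_\C(D,(W_{/f})_0)}{(W_D)_1}{(W_D)_0 \times *}{(W_D)_0 \times (W_D)_0}{}{}{(s \comma t)}{id \times f_D}
\end{center}
because $map_\C(D,-)$ preserves finite limits and $map_\C(D,W_n) \simeq (W_D)_n$. The top-left corner is by definition (the classical version, as in \cite{Re01}) the space of objects of the slice complete Segal space $(W_D)_{/f_D}$. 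Then the map $(W_{/f})_0 \to W_0$ being an equivalence in $\C$ is, by Definition \ref{Def Equiv of simp obj} and the levelwise nature of equivalences, detected on all mapping spaces $map_\C(D,-)$; so it is an equivalence if and only if $((W_D)_{/f_D})_0 \to (W_D)_0$ is an equivalence of spaces for every $D$, which is precisely the condition that $f_D$ is a final object of the complete Segal space $W_D$ in the sense of \cite{Re01} (or \cite{Ra18a}). That gives both implications at once.

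The main obstacle I anticipate is bookkeeping rather than mathematics: making sure the slice object $(W_{/f})_0$ really does correspond, under $map_\C(D,-)$, to the object-space of the Rezk slice $(W_D)_{/f_D}$, including that the maps $s,t$ and $f$ match up the way one expects, and that ``$f$ without context'' $* \to W_0$ induces a genuine object $f_D \colon * \to (W_D)_0$ for each $D$ (this is again Remark \ref{Rem CSO to CSS}). Once that dictionary is in place the proof is a one-line application of the fact that a map of simplicial objects is an equivalence iff it induces an equivalence on $map_\C(D,-)$ for all $D$, which is how equivalences in $\C^{\Delta^{op}}$ were defined. I would therefore keep the proof short: set up the pullback of spaces, identify the corners, invoke the definition of final object in a CSS, and conclude.
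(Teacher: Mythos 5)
Your proposal is correct and follows essentially the same route as the paper: both reduce the statement to the observation that the pullback defining $(W_{/f})_0$ is preserved by $map_{\C}(D,-)$, that $map_{\C}(D,W_n)\simeq (W_D)_n$, and that a map in $\C$ is an equivalence precisely when it induces equivalences on $map_{\C}(D,-)$ for all $D$. The paper's proof is just a terser version of yours (it states the CSS finality condition and says it ``is exactly the condition we stated in the definition''), so your only deviation is spelling out the bookkeeping explicitly; note only that the detection of equivalences should be attributed to the Yoneda-type property of the ambient CSS $\C$ rather than to Definition \ref{Def Equiv of simp obj}, which concerns levelwise equivalences of simplicial objects.
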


 \begin{proof}
  An object in a CSS $f_D$ in $W_D$ is final if and only if the projection map 
  $$(W_D)_1 \underset{(W_D)_0}{\times} * \to (W_D)_0$$
  is an equivalence, which is exactly the condition we stated in the definition.
 \end{proof}
 
 Based on this lemma we have following definition and corollary.
 
 \begin{defone}
  A Cartesian fibration $p: \R \to \C$ has a final object if there exists a section $\C \to \R$ such that for each object $d$ in $\R$
  the induced map on fibers $* \to * \times_{\C} \R$ is a final object.
 \end{defone}
 
 \begin{corone}
  A CSO $W$ has a final object if and only if $\C_{/W}$ has a final object.
 \end{corone}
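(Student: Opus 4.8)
The plan is to read off both sides of the equivalence fiberwise and match them up, using that the representable Cartesian fibration $\C_{/W}\to\C$ has, over each object $D$, fiber equivalent to the complete Segal space $W_D=\mathrm{map}_{\C}(D,W)$, as recorded in Subsection \ref{Subsec Representable Cartesian fibrations out of Complete Segal Objects}. Unwinding the definition of a final object of a Cartesian fibration, $\C_{/W}$ has a final object exactly when there is a section $\C\to\C_{/W}$ whose value in each fiber $W_D$ is a final object of the CSS $W_D$. On the other hand, by the preceding Lemma, an object $\ast\to W$ without context is final in $W$ (in the sense of Definition \ref{Def Final Object in CSO}) precisely when its induced object in $W_D$ is final for every $D$. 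So the work is to see that these two conditions say the same thing.

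First I would produce a section from a final object. Given a final object $f\colon\ast\to W$, each structure map $D\to\ast$ in $\C$ induces a restriction $W_\ast\to W_D$, and the images $f|_D$ of $f$ assemble into a section $\C\to\C_{/W}$; by the Lemma each $f|_D$ is final in $W_D$, so this section witnesses a final object of $\C_{/W}$. Conversely, given a section $s\colon\C\to\C_{/W}$ that is final in every fiber, I would restrict it along the terminal object $\ast\to\C$ (which exists since $\C$ has finite limits) to get an object $f:=s_\ast\colon\ast\to W$, and then argue that for every $D$ the fiber value $s_D$, which is final in $W_D$, agrees with the pullback $f|_D$ along $D\to\ast$; granting this, $f|_D$ is final in $W_D$ for all $D$, so $f$ is final in $W$ by the Lemma. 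The comparison $s_D\simeq f|_D$ is exactly the assertion that a fiberwise-final section of a Cartesian fibration is Cartesian, equivalently that the restriction functors $W_\ast\to W_D$ preserve final objects, which is the content of the Remark following Theorem \ref{The CSS limits to CSO limits} and is the only input beyond definition-chasing.

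I expect this last comparison to be the main obstacle: everything else is unwinding the two definitions through the fiber identification for $\C_{/W}$, but one genuinely needs that the coherent datum of a fiberwise-final section is pinned down by its value over the terminal object. An alternative, perhaps cleaner, route avoids this by going through Theorem \ref{The CSS limits to CSO limits} with $I=\ast$: a final object of $W$ is a limit of shape $\ast$, so $W$ has one iff each $W_D$ has a final object, while $\C_{/W}$ has a final object iff the projection $\C_{/W}\to\C$ admits a right-adjoint section, which likewise amounts to each fiber $W_D$ having a final object; I would mention this as a parallel argument.
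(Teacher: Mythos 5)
Your argument is correct and is exactly the unwinding the paper intends: the corollary is stated there without proof as an immediate consequence of the preceding lemma and the definition of a final object of a Cartesian fibration, and your reduction of both sides to the condition ``the induced object of $W_D$ is final for every $D$'' is that intended argument. The one point worth tightening is the justification of $s_D\simeq f|_D$: this is not really a statement about restriction functors preserving final objects, but follows directly from the Yoneda lemma, since a section $\C\simeq\C_{/*}\to(\C_{/W})_0$ of right fibrations corresponds to a single map $*\to W_0$ whose induced object over $D$ is the composite $D\to *\to W_0$, so the fiberwise values of the section are automatically the pullbacks of its value over the terminal object.
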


%
%

 Now we can use the definition of a final object to define general limits.
 
 \begin{defone} \label{Def Cones in CSO}
  Let $f: I \to W$ be a diagram in $W$. We define the Cartesian fibration of cones as 
  $$\C_{/f} = \C_{/W} \underset{(\C_{/W})^I}{\times} ((\C_{/W})^I)^{F(1)} \underset{(\C_{/W})^I}{\times} \C$$
  where the map $\C \to (\C_{/W})^I$ is induced by the map $f: \C_{/I} \to \C_{/W}$.
 \end{defone}
 
 \begin{defone} \label{Def Limits in CSO}
  The diagram $f: I \to W$ has a limits if the Cartesian fibration $\C_{/f}$ has a final object.
 \end{defone}

 Using Theorem \ref{The CSS limits to CSO limits} we get following corollaries. 
 
 \begin{corone}
  The map $\Delta_W: \C_{/W} \to (\C_{/W})^I$ has a right adjoint if and only if for each $f: I \to W$, the Cartesian fibration 
  $\C_{/f}$ has a final object.
 \end{corone}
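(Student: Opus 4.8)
The plan is to reduce the statement to the fiberwise situation, where it becomes the classical fact about complete Segal spaces that a diagonal map $\Delta \colon \mathscr{E} \to \mathscr{E}^{J}$ admits a right adjoint precisely when every $J$-indexed diagram in $\mathscr{E}$ has a limit, equivalently when the complete Segal space of cones over each such diagram has a final object (see \cite{Ra18a}). All the pieces needed to make this reduction are already available: Theorem \ref{The CSS limits to CSO limits} to pass between $\Delta_W$ and its fibers, the fiberwise characterization of final objects in Cartesian fibrations, and the identification of the fiber of a pullback of Cartesian fibrations with the pullback of the fibers.

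First I would invoke Theorem \ref{The CSS limits to CSO limits} (and its proof): the map $\Delta_W \colon \C_{/W} \to (\C_{/W})^{I}$ has a right adjoint if and only if for every object $D$ in $\C$ the induced map of fibers $(\Delta_W)_D \colon W_D \to (W_D)^{I_D}$ has a right adjoint. So it suffices to show that the right-hand condition of the corollary translates exactly into the statement that each $(\Delta_W)_D$ admits a right adjoint.

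Next I would identify the fibers of the Cartesian fibration of cones. By Definition \ref{Def Cones in CSO}, $\C_{/f}$ is built from $\C_{/W}$, $(\C_{/W})^{I}$, $((\C_{/W})^{I})^{F(1)}$ and $\C$ by iterated fiber products over $\C$; since taking the fiber over a fixed object $D$ commutes with these fiber products, the fiber $(\C_{/f})_D$ is the corresponding pullback of complete Segal spaces, which is precisely the CSS of cones over the diagram $f_D \colon I_D \to W_D$ determined by $f$. Hence, by the fiberwise criterion for a final object in a Cartesian fibration (the definition and corollary immediately preceding this one), ``$\C_{/f}$ has a final object'' is equivalent to ``for every $D$ the CSS of cones over $f_D$ has a final object'', i.e.\ the diagram $f_D$ has a limit in the CSS $W_D$. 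Combining this with the classical CSS fact quoted above, which says $(\Delta_W)_D$ has a right adjoint iff every $I_D$-indexed diagram in $W_D$ has a limit, and letting $D$ and $f$ range, yields the asserted equivalence.

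The main obstacle is the bookkeeping with quantifiers: one must check that the diagrams $f_D \colon I_D \to W_D$ obtained by restricting the sections $f$ appearing in Definition \ref{Def Cones in CSO} are exactly the $I_D$-indexed diagrams that must be tested to conclude adjointness of $(\Delta_W)_D$, rather than only a proper subclass. The point is that a diagram of shape $I_D$ in $W_D$ is the same thing as a map $D \to (\C_{/W})^{I}$ over $\C$ — a ``$D$-point'' of the cone construction — so the family of such $f_D$, as $D$ varies, recovers the generic diagram; once this is in place the equivalence with ``$(\Delta_W)_D$ has a right adjoint for all $D$'' is genuine and not a weakening, and the proof closes.
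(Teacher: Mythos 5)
Your proposal is correct and follows essentially the same route as the paper's own (much terser) proof: reduce to the fibers via Theorem \ref{The CSS limits to CSO limits}, identify the fiber of $\C_{/f}$ over $D$ with the CSS of cones over $f_D$ in $W_D$, and invoke the classical complete Segal space fact together with the fiberwise criterion for final objects in a Cartesian fibration. Your extra care about which diagrams $f_D$ arise as $D$ varies fills in a step the paper leaves implicit, but it is the same argument.
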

 
 \begin{proof}
  $\Delta_W: \C_{/W} \to (\C_{/W})^I$ has a right adjoint if and only if the map $W_D \to (W_D)^{I_D}$ has a right adjoint
  which is equivalent to $(W_D)_{/f_D}$ having a fina object, which finally is equivalent to $(\C_{/W})^I$
  having a final object.
 \end{proof}

\section{Examples of Complete Segal Objects} \label{Sec Examples of Complete Segal Objects}
  In this section we take a closer look at examples of complete Segal objects.
  
  {\bf Classical Categories:}
  Let $C$ be a category with finite limits and $\C = \n C$ be the classification diagram of this category. This gives us a CSS.
  The functor $\n$ is an embedding and so a simplicial object in $\C$ is a {\it functor} 
  from the {\it category} 
  $$S:\Delta^{op} \to C.$$
  \par 
  From this perspective a Segal object in $\C$ is a functor
  $$S: \Delta^{op} \to C$$
  such that for each $n \geq 2$ the induced map 
  $$S_n \xrightarrow{ \ \ \cong \ \ } S_1 \underset{S_0}{\times} ... \underset{S_0}{\times} S_1 $$
  is an isomorphism. This is exactly the data of a category object in $C$ with objects $S_0$ and morphisms $S_1$.
  \par 
  The Segal object $S: \Delta^{op} \to C$ is complete if and only if each isomorphism in this category object is the identity map.
  
  \begin{exone}
   In particular in the category of sets, $\set$, Segal objects are just categories and complete Segal objects 
   are categories without non-trivial automorphisms.
  \end{exone}
  
  Notice in this case all categorical arguments developed for complete Segal objects just translate to the usual argument
  for ordinary categories.
  
 {\bf Spaces:} 
 Let $\spaces$ be a CSS that models spaces.
 In this case a simplicial object is a simplicial space. 
 However, a Segal object is not precisely a Segal space as it might not satisfy the Reedy fibrancy condition.
 We need to find a way to strictify our Segal object in a correct way, to get an actual Segal space.
 \par 
 Every Segal object $W$ gives us a Segal Cartesian fibration $\spaces_{/W}$. Let $\hat{W}$ be the fiber over the final object. 
 This is a Segal space such that there is an equivalence of spaces 
 $$W_n \simeq \hat{W}_n$$
 Thus, every Segal object in spaces is equivalent to a Segal space.
 \par 
  Using the same argument we can show that every complete Segal object in spaces is equivalent to a complete Segal space.
  \par 
  Notice there are some subtle distinctions between complete Segal objects in spaces and complete Segal spaces.
  For example, in a complete Segal object an object is a map of spaces $D \to W_0$, whereas 
  an object in a complete Segal space is defined as a map from the final object $\Delta[0] \to W_0$.
  \par 
  Although the definitions might seem apparently different they are still consistent, as spaces are generated
  by the final object and so it suffices to determine the maps out of the final object. 
  From that perspective defining an object as a map $D \to W_0$ is the correct generalization to a general higher category.
  Thus the theory of complete Segal objects properly generalizes the theory of CSS. 
 
 {\bf Complete Segal Spaces:}
 Let $\CSS$ be the (large) CSS of small CSS. 
 A complete Segal object in $\CSS$ is then exactly a {\it double higher category}.
 This naming convention comes from the fact that category objects in categories are commonly known as {\it double categories}.
 \par 
 Concretely a double higher category is a bisimplicial space $W_{\bullet \bullet}$ such that for each $n$, each of 
 the simplicial spaces $W_{n \bullet}$ and $W_{\bullet n}$ are CSS. 
 \par 
 Notice the categorical constructions from complete Segal objects are compatible with complete Segal objects.
 For example an adjunction of double higher categories $W \to V$ is the data of adjunctions 
 $W_k \to V_k$ for each $k$.
 
 {\bf Right Fibrations:}
 Let $\R Fib(\C)$ be the CSS of right fibrations over a given CSS $\C$.
 In this case a Segal object should correspond to Segal Cartesian fibrations, however as in the case of spaces
 we do have to worry about fibrancy conditions. Thus we use representable Cartesian fibrations. 
 \par 
 Let $\R Fib(\C)_{/W} \to  \R Fib(\C)$ be the Segal Cartesian fibration corresponding to the Segal object $W$ in $\R Fib(\C)$.
 Let $\hat{\R}$ be the fiber of the projection map over the point $id_{\C}: \C \to \C$ (using the fact that the identity map 
 is always a right fibration). This fiber is exactly a Segal Cartesian fibration over $\C$ as it satisfies the Reedy right 
 fibrancy condition as well as the Segal condition.
 \par 
 Using a similar argument we can show that a complete Segal object in $\R Fib(\C)$ is just a Cartesian fibration.

 {\bf Stable CSS:}
 Stable higher categories are used in stable homotopy theory and in particular in the study of spectra.
 
 \begin{defone}
  A CSS is stable if it satisfies following conditions.
  \begin{enumerate}
   \item It has finite limits and colimits.
   \item The initial object and final object are equivalent (it is pointed)
   \item A commutative square is a pullback square if and only if it is a pushout square.
  \end{enumerate}
 \end{defone}
 
 \begin{remone}
  There are many other ways to define stable higher categories. The reason we chose this particular definition will become clear later on.
  For an introduction to stable higher category theory see \cite{Ra11}.
 \end{remone}

 Before we can classify complete Segal objects in a stable higher category, we need following lemma.
 
 \begin{lemone}
  Let $\C$ be a CSS such that every $-1$-truncated map is an equivalence. Then there are no non-trivial complete Segal objects.
 \end{lemone}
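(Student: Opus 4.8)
The plan is to bootstrap from two results already established in the excerpt: the completeness condition, which asserts that $s_0\colon W_0 \to W_{hoequiv}$ is an equivalence in $\C$, and the lemma that $U\colon W_{hoequiv} \to W_1$ is $(-1)$-truncated. The hypothesis on $\C$ immediately promotes $U$ to an equivalence. Since the composite $W_0 \xrightarrow{s_0} W_{hoequiv} \xrightarrow{U} W_1$ is the degeneracy $s_0\colon W_0 \to W_1$ — this is exactly the content of the second proof that $s_0\colon T_0 \to T_1$ is a homotopy equivalence, where the witness $W_0 \to W_{hoequiv}$ is induced by the unique map $W_0 \to W_3$ together with $s_0\colon W_0 \to W_1$, and $U$ is the projection recovering the latter — we conclude that $s_0\colon W_0 \to W_1$ is itself an equivalence in $\C$.

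Next I would collapse all the structure maps of $W$. From the simplicial identities $d_0 s_0 = d_1 s_0 = \mathrm{id}_{W_0}$ and the fact that $s_0$ is an equivalence, both $d_0$ and $d_1\colon W_1 \to W_0$ are equivalences homotopic to $s_0^{-1}$; in particular $d_0 \simeq d_1$. Feeding this into the Segal condition $W_n \xrightarrow{\ \simeq\ } W_1 \underset{W_0}{\times}\cdots\underset{W_0}{\times} W_1$ and repeatedly using that a pullback along an equivalence is an equivalence, one obtains $W_n \simeq W_1 \simeq W_0$ for every $n$, and moreover that the iterated degeneracy $W_0 \to W_n$ dual to the unique map $[n]\to[0]$ in $\Delta$ is an equivalence in $\C$.

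Finally I would package this as an equivalence of simplicial objects. The unique maps $[n]\to[0]$ in $\Delta$ assemble, for varying $n$, into a map of simplicial objects from the constant object on $W_0$ to $W$, which by the previous paragraph is a level-wise equivalence, hence an equivalence of simplicial objects by Definition \ref{Def Equiv of simp obj}. Thus every complete Segal object $W$ in $\C$ is equivalent to the constant simplicial object on $W_0$, i.e.\ is a discrete (trivial) complete Segal object, which is the assertion.

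The anticipated obstacle is bookkeeping rather than conceptual: confirming that the composite of the completeness equivalence with $U$ is literally the degeneracy $s_0\colon W_0 \to W_1$, so that the first step is legitimate, and keeping track of which face maps appear in the iterated pullback when running the collapse in the second step. Given the lemmas on $T_{hoequiv}$ and on $s_0$ already proved in the excerpt, neither should cause real trouble.
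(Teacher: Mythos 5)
Your proposal is correct and follows essentially the same route as the paper: both arguments combine the completeness equivalence $W_0 \simeq W_{hoequiv}$ with the previously established fact that $U\colon W_{hoequiv}\to W_1$ is $(-1)$-truncated to conclude, via the hypothesis on $\C$, that $s_0\colon W_0\to W_1$ is an equivalence, and then invoke the Segal condition to collapse all the $W_n$. The paper merely states these steps more tersely (asserting without elaboration that triviality is equivalent to $W_0\to W_1$ being an equivalence), so your write-up supplies details the paper leaves implicit rather than taking a different path.
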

 
 \begin{proof}
  Let $W$ be a complete Segal object $\C$. Then it is trivial if and only if the map $W_0 \to W_1$ is an equivalence.
  However, we already know that map is $-1$-truncated and so it is an equivalence.
 \end{proof}
 
 \begin{theone}
  Let $\C$ be a stable CSS. Then every CSO is trivial.
 \end{theone}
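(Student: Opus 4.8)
The plan is to combine the previous two results. By the lemma immediately preceding this theorem, it suffices to show that in a stable CSS $\C$ every $(-1)$-truncated map is an equivalence; the theorem then follows at once since then there are no non-trivial complete Segal objects. So the real content is a statement about stable higher categories, and the strategy is to exploit the pointedness together with the pullback-equals-pushout axiom.

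First I would recall that a map $f: X \to Y$ being $(-1)$-truncated is equivalent to the diagonal $X \to X \times_Y X$ being an equivalence, i.e. to the square
\begin{center}
 \begin{tikzcd}[row sep=0.4in, column sep=0.4in]
  X \arrow[r, "\mathrm{id}"] \arrow[d, "\mathrm{id}"'] & X \arrow[d, "f"] \\
  X \arrow[r, "f"] & Y
 \end{tikzcd}
\end{center}
being a pullback square. In a stable CSS this square is then also a pushout square. But a pushout of $\mathrm{id}_X$ along $\mathrm{id}_X$ is just $X$ itself, so the canonical comparison map $X \to Y$ induced by this pushout — which is exactly $f$ — must be an equivalence. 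A cleaner way to phrase the same argument: since $\C$ is pointed and pullbacks coincide with pushouts, for any $f$ the fiber and cofiber sequences agree, so $f$ is an equivalence if and only if its fiber is the zero object if and only if its cofiber is the zero object; and if $f$ is $(-1)$-truncated then its fiber $F \to X$ is itself $(-1)$-truncated with $F \times_X F \simeq F$, while the fiber of a $(-1)$-truncated map over any point is $(-1)$-truncated over that point, forcing $F$ to be a sum of copies of the terminal object, hence $F \simeq *$, hence $f$ is an equivalence.

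Then I would apply the preceding lemma: a complete Segal object $W$ in $\C$ is trivial precisely when $s_0 : W_0 \to W_1$ is an equivalence, and we showed earlier (in the proof that $U : T_{hoequiv} \to T_1$ is $(-1)$-truncated, together with completeness identifying $W_0 \simeq W_{hoequiv}$) that $W_0 \to W_1$ is always $(-1)$-truncated. By the first part it is therefore an equivalence, so $W$ is trivial.

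The main obstacle is making the first step fully rigorous in the model-independent language used here: one must be careful that "$(-1)$-truncated" has the characterization via the diagonal square, and that the terminal object being also initial lets one conclude that a $(-1)$-truncated object (an object $A$ with $A \times A \simeq A$) with a point is necessarily terminal. In the stable setting the cleanest route is through fiber/cofiber sequences, using axiom (3) to pass between the pullback witnessing $(-1)$-truncatedness and the corresponding pushout; everything else is formal.
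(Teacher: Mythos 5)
Your proposal is correct and follows essentially the same route as the paper: reduce via the preceding lemma to showing every $(-1)$-truncated map in a stable CSS is an equivalence, characterize $(-1)$-truncatedness by the pullback square with two identity legs, and use the stability axiom to turn it into a pushout along identities, which forces $f$ to be an equivalence. The additional fiber/cofiber reformulation is unnecessary (and its claim that the fiber is ``a sum of copies of the terminal object'' is shakier than the direct pushout argument), but the core proof is exactly the paper's.
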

 
 \begin{proof}
  Based on the previous lemma it suffices to prove that every ($-1$)-truncated map is an equivalence.
  A map $f: X \to Y$ is a ($-1$)-truncated if and only if the square 
  \begin{center}
   \pbsq{X}{X}{X}{Y}{}{}{f}{f}
  \end{center}
  is a pullback square. However, by the stability condition this is equivalent to the same square being a pushout square.
  However, the pushout along the identity map is necessary an equivalence. 
 \end{proof}

\section{Complete Segal Objects and Univalence} \label{Sec Complete Segal Objects and Univalence}
Univalence is a key concept in homotopy type theory \cite{UF13}, which is a foundational approach to mathematics
that is homotopy invariant. One aspect of homotopy type theory is the construction of models,
which are higher categories that satisfy the constructions and axioms of homotopy type theory.
One of the axioms of homotopy type theory is the {\it univalence axiom}, which then leads to a notion of a 
univalent map in a higher category.
\par 
In this section we show how we can use Segal objects to study univalent maps in locally Cartesian closed higher categories
that are not necessary presentable. I am indebted to Mike Shulman for several helpful conversation and ideas.

 \subsection{History of Univalent Maps} \label{Subsec History of Univalent maps}
 The first model of homotopy type theory was constructed by Kapulkin and Lumsdaine (following Voevodsky)
 using Kan complexes. As part of their work they defined a {\it univalent fibration} \cite[Definition 3.2.10]{KL12},
 which allowed them to prove that the univalence axiom holds in their model. Notice their definition relied on the fact that 
 simplicial sets have a model structure, namely the Kan model structure.
 \par 
 Gepner and Kock generalized this definition of a univalent fibration to the setting of locally Cartesian closed presentable quasi-categories 
 and defined a {\it univalent family} \cite[3.2]{GK17}, removing the model-dependence.
 The goal of this subsection is to give a basic review of the approach that Gepner and Kock used to define univalence
 and how it has to be adjusted in the non-presentable setting.

 \begin{remone}
  We will not give precise definitions as they can already be found in \cite{GK17}, but rather focus on the ideas and give 
  proper references.
 \end{remone}
 
  \begin{remone}
  For this subsection let $\P$ be a fixed quasi-category, in order to be consistent 
  with the model of $(\infty,1)$-categories used in \cite{GK17}.
  However, it should be noted that their definitions do not 
  depend on any particular property of quasi-categories and can be adopted to any other setting.
 \end{remone}
 
 \begin{defone}
  \cite[2]{GK17}
  For two objects $X,Y$ in $\P$, there is a right fibration 
  $$\M ap(X,Y) \to \P$$
  that over the point $T$ in $\P$ has value $map_{\P}(X \times T, Y)$.
 \end{defone}

 We have following representability results for this right fibration.
 
 \begin{propone}
  \cite[Proposition 2.1]{GK17}
  The right fibration $\M ap(X,Y)$ is representable if and only if $\P$ is Cartesian closed.
  We denote the representing object with $\underline{Map}(X,Y)$
 \end{propone}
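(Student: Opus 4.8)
The plan is to pass through the correspondence between right fibrations over $\P$ and presheaves of spaces on $\P$, and then recognize the representability condition as the existence of internal hom objects. By construction the right fibration $\M ap(X,Y) \to \P$ models the presheaf $h_{X,Y}$ determined by
$$h_{X,Y}(T) = map_{\P}(X \times T, Y),$$
with functoriality induced by precomposition along $X \times (-)$; on the other hand, for an object $c$ of $\P$ the right fibration $\P_{/c} \to \P$ models the representable presheaf $map_{\P}(-,c)$. By the Yoneda lemma for right fibrations recalled in Subsection \ref{Subsec Representable Reedy Right Fibrations} (applied in the $\Delta$-constant case), a map $\P_{/c} \to \M ap(X,Y)$ over $\P$ is the same datum as a point of the fiber $map_{\P}(X \times c, Y)$, and such a map is an equivalence exactly when the induced natural transformation $map_{\P}(-,c) \to h_{X,Y}$ is an equivalence. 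Hence $\M ap(X,Y)$ is representable if and only if the presheaf $h_{X,Y}$ is representable.

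First I would treat the ``if'' direction. Assuming $\P$ is Cartesian closed, each functor $X \times (-)\colon \P \to \P$ has a right adjoint, which I will write $Y \mapsto \underline{Map}(X,Y)$. The adjunction supplies natural equivalences $map_{\P}(T,\underline{Map}(X,Y)) \simeq map_{\P}(X \times T, Y) = h_{X,Y}(T)$, so $h_{X,Y}$ is represented by $\underline{Map}(X,Y)$; transporting this equivalence of presheaves back through the correspondence above produces an equivalence $\M ap(X,Y) \simeq \P_{/\underline{Map}(X,Y)}$ over $\P$, so $\M ap(X,Y)$ is representable with representing object $\underline{Map}(X,Y)$.

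For the ``only if'' direction I would argue as follows. Assuming $\M ap(X,Y)$ is representable, the first paragraph gives, for fixed $X$ and varying $Y$, a natural equivalence $map_{\P}(X \times T, Y) \simeq map_{\P}(T, Z_Y)$ with $Z_Y$ depending functorially on $Y$, and this exhibits $Y \mapsto Z_Y$ as a right adjoint to $X \times (-)$. Letting $X$ range over all objects of $\P$ then shows that $\P$ is Cartesian closed, with $\underline{Map}(X,Y) := Z_Y$. (For a single pair $(X,Y)$ one only extracts a partial right adjoint to $X \times (-)$ defined at $Y$; the full Cartesian closedness in the statement is to be understood with $X$ and $Y$ ranging over all objects.)

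The only genuine work here is bookkeeping. One must check that the functoriality packaged into $\M ap(X,Y)$ as a right fibration really is precomposition with $X \times (-)$, so that this right fibration models $h_{X,Y}$ and not merely its objectwise values, and that ``representable right fibration'' (equivalence over $\P$ to some $\P_{/c}$) corresponds to ``representable presheaf'' under this dictionary; both facts are consequences of the Yoneda results of Subsection \ref{Subsec Representable Reedy Right Fibrations}, but they must be invoked in the correct special case, since those results are stated for simplicial objects over a CSS rather than for a bare quasi-category. The step most worth spelling out is the functorial assembly of the objects $Z_Y$ in the ``only if'' direction, which again follows from naturality of the Yoneda equivalence; no idea beyond the higher Yoneda lemma is needed.
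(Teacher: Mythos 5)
Your argument is correct and matches the intended proof: the paper does not actually prove this statement but imports it from \cite[Proposition 2.1]{GK17}, whose argument is exactly the translation you describe between representability of the right fibration $\M ap(X,Y)$, representability of the presheaf $T \mapsto map_{\P}(X \times T, Y)$ via the Yoneda lemma for right fibrations, and the existence of a right adjoint to $X \times (-)$. You are also right to flag the two genuine points of care --- that the ``only if'' direction needs representability for all pairs $(X,Y)$, not a single one, and that the pointwise representing objects $Z_Y$ must be assembled functorially into a right adjoint, which is the standard fact that objectwise representability of $Y \mapsto map_{\P}(X \times -, Y)$ yields an adjoint via naturality of the Yoneda equivalence.
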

 
 This right fibration comes with a sub-object defined as follows.
 
 \begin{defone}
  For two objects $X,Y$ in $\P$, there is a right fibration 
  $$\E q(X,Y) \to \P$$
  that over the point $T$ in $\P$ has value $hoequiv_{\P}(X \times T, Y)$.
  It is a subobject of the right fibration $\M ap(X,Y)$.
 \end{defone}
 
 We have following key result about this right fibration.
 
 \begin{propone}
  \cite[Proposition 2.9]{GK17}
  If $\P$ is Cartesian closed and presentable then the right fibration $\E q(X,Y)$ is representable.
  We denote the representing object with $\underline{Eq}(X,Y)$.
 \end{propone}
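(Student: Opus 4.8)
The plan is to combine the preceding proposition with an explicit construction of the representing object inside $\P$. By that proposition, Cartesian closure already gives that $\M ap(X,Y)$ is representable; write $M := \underline{Map}(X,Y)$, so that $\M ap(X,Y) \simeq \P_{/M}$ and the inclusion $\E q(X,Y) \hookrightarrow \M ap(X,Y)$ exhibits $\E q(X,Y)$ as a $(-1)$-truncated sub-right-fibration of $\P_{/M}$. Since $\P$ is presentable, so are all of its slices, and the task reduces to producing an object $\underline{Eq}(X,Y)$ of $\P$ together with a $(-1)$-truncated map $\underline{Eq}(X,Y) \to M$ whose ``image'', over each test object $T$, is exactly the union of path components of $\M ap(X,Y)(T) = map_\P(X\times T, Y)$ consisting of those maps that are equivalences over $T$.

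First I would build the candidate. Using Cartesian closure again, form $\underline{Map}(Y,X)$ together with the two composition maps $\underline{Map}(X,Y)\times\underline{Map}(Y,X) \to \underline{Map}(X,X)$ and $\underline{Map}(X,Y)\times\underline{Map}(Y,X)\to\underline{Map}(Y,Y)$ sending $(f,g)$ to $g\circ f$ and to $f\circ g$ respectively, and the identity sections $\mathrm{id}_X \colon * \to \underline{Map}(X,X)$ and $\mathrm{id}_Y\colon *\to\underline{Map}(Y,Y)$. Then set
$$\underline{Eq}(X,Y) \;:=\; \Bigl(\underline{Map}(X,Y)\times\underline{Map}(Y,X)\times\underline{Map}(Y,X)\Bigr)\;\underset{\underline{Map}(X,X)\times\underline{Map}(Y,Y)}{\times}\;(*\times *),$$
where the left-hand leg sends $(f,l,r)$ to $(l\circ f,\; f\circ r)$ and the right-hand leg is $(\mathrm{id}_X,\mathrm{id}_Y)$; this limit exists because $\P$, being presentable, has all finite limits. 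Intuitively $\underline{Eq}(X,Y)$ is the object of tuples $(f, l, r, \alpha, \beta)$ recording a map $f$ together with a left inverse $l$ and a right inverse $r$ and witnesses $\alpha\colon l\circ f\simeq\mathrm{id}$, $\beta\colon f\circ r\simeq\mathrm{id}$; the projection to $\underline{Map}(X,Y)$ forgets the inverse data. If one prefers to avoid the explicit formula, one can instead verify directly that the presheaf $\E q(X,Y)$ is accessible — again using presentability — and preserves small limits, and then invoke the adjoint functor theorem; the construction above is the concrete incarnation of that argument.

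It then remains to identify the functor of points of $\underline{Eq}(X,Y)$ with $\E q(X,Y)$. Unwinding the defining adjunctions, $map_\P(T, \underline{Eq}(X,Y))$ is the space of such tuples with $f\colon X\times T\to Y$ and $l,r$ maps over $T$, and the forgetful map to $map_\P(T,M)=\M ap(X,Y)(T)$ sends this tuple to $f$. The main obstacle — the only genuinely non-formal point — is the standard fact that, for fixed $f$, the space of pairs ``(left-inverse data, right-inverse data)'' is contractible if $f$ is an equivalence over $T$ and empty otherwise; equivalently, that $\underline{Eq}(X,Y)\to M$ is $(-1)$-truncated with essential image precisely the subobject of equivalences. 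Granting this, $map_\P(-,\underline{Eq}(X,Y))$ is canonically the sub-presheaf of $\M ap(X,Y)$ spanned by the equivalences, and a short check that its transition maps are given by base change along $T'\to T$ matches it with the right fibration $\E q(X,Y)$ of the definition; hence $\underline{Eq}(X,Y)$ represents it. The role of presentability is exactly to guarantee that the requisite limits — or, in the packaged version, the adjoint functor theorem — are available.
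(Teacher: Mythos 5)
Your construction is correct, but note that the paper does not actually prove this proposition: it appears in a review subsection and is simply cited from Gepner--Kock, whose argument (as the paper itself describes a few lines later) produces $\underline{Eq}(X,Y)$ as a subobject of $\underline{Map}(X,Y)$ and genuinely leans on presentability to build that subobject. Your route is different: you realize the object of equivalences as the explicit finite limit
$$\bigl(\underline{Map}(X,Y)\times\underline{Map}(Y,X)\times\underline{Map}(Y,X)\bigr)\underset{\underline{Map}(X,X)\times\underline{Map}(Y,Y)}{\times}\ast ,$$
recording a left inverse, a right inverse, and the two witnessing homotopies. Two remarks. First, the one non-formal step you defer --- that the fibre of $\underline{Eq}(X,Y)\to\underline{Map}(X,Y)$ over $f$ is contractible or empty according to whether $f$ is an equivalence --- is the standard ``biinvertibility'' lemma and deserves a line: if $l\circ f\simeq \mathrm{id}$ and $f\circ r\simeq \mathrm{id}$ then $l\simeq l\circ f\circ r\simeq r$, so $f$ is invertible and both homotopy fibres (of pre- and post-composition with $f$ over the identities) are contractible; otherwise one of them is empty. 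Second, and more interestingly, your argument never uses presentability --- Cartesian closure and finite limits suffice --- so you have proved a strictly stronger statement than the one cited. That is precisely the thesis of the later sections of this paper: the subobject definition of $\E q$ is replaced there by a pullback encoding inverse data (the $Z^T(3)$ and $T_{hoequiv}$ constructions, where the $3$-simplex packages exactly your pair of one-sided inverses with witnesses) so that univalence can be defined without presentability. So your proof is not the cited one, but is essentially the paper's own key technique applied back to the Gepner--Kock proposition; what the citation's approach buys instead is a uniform treatment of arbitrary full subfunctors in the presentable setting, which your explicit formula does not give.
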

 
 There results have following obvious corollaries.
 
 \begin{corone}
  \cite[Corollary 2.5]{GK17} \cite[Theorem 2.10]{GK17}
  Let $\P$ be locally Cartesian closed and presentable. Then for any morphisms $f: X \to T$ and $g: Y \to T$
  the right fibrations $\M ap_T(f,g)$ and $\E q_T(f,g)$ are representable right fibrations over $\P_{/T}$,
  with representing objects $\underline{Map}_T(f,g)$ and $\underline{Eq}_T(f,g)$, respectively.
 \end{corone}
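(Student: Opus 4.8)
The plan is to deduce both assertions from Proposition 2.1 and Proposition 2.9 by applying them not to $\P$ itself but to the slice $(\infty,1)$-category $\P_{/T}$. The reduction rests on two standard stability facts: since $\P$ is locally Cartesian closed, each slice $\P_{/T}$ is Cartesian closed (indeed this is essentially the definition of local Cartesian closure, as every pullback functor admits a right adjoint), and since $\P$ is presentable, each slice $\P_{/T}$ is again presentable. Hence $\P_{/T}$ satisfies exactly the hypotheses appearing in Proposition 2.1 (Cartesian closed) and in Proposition 2.9 (Cartesian closed and presentable).

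The main bookkeeping step is to identify $\M ap_T(f,g)$ with the right fibration $\M ap(f,g)$ formed inside $\P_{/T}$, where $f$ and $g$ are viewed as objects of $\P_{/T}$, and similarly for $\E q_T(f,g)$. An object of $\P_{/T}$ lying over both $f$ and $g$ is a map $h\colon S \to T$, and the product $f \times h$ computed in $\P_{/T}$ is the fiber product $X \times_T S \to T$; so the value of the internally-formed $\M ap(f,g)$ over $h$ is $map_{\P_{/T}}(f \times h, g) \simeq map_{\P_{/T}}(X \times_T S \to T,\ Y \to T)$, which is precisely the defining value of $\M ap_T(f,g)$ over $h$. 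The identical computation handles $\E q_T(f,g)$, once one notes that a morphism of $\P_{/T}$ is an equivalence exactly when its underlying morphism of $\P$ is, so that the subobject $\E q(f,g) \hookrightarrow \M ap(f,g)$ formed in the slice agrees with $\E q_T(f,g) \hookrightarrow \M ap_T(f,g)$.

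With these identifications in hand the conclusion is immediate: Proposition 2.1 applied to the Cartesian closed category $\P_{/T}$ gives that $\M ap_T(f,g)$ is representable, with representing object $\underline{Map}_T(f,g)$ (the internal mapping object of $\P_{/T}$), and Proposition 2.9 applied to the Cartesian closed, presentable category $\P_{/T}$ gives that $\E q_T(f,g)$ is representable, with representing object $\underline{Eq}_T(f,g)$. Note that, just as in the absolute case, representability of the mapping right fibration uses only Cartesian closure, whereas representability of the equivalence right fibration genuinely requires presentability. The step I expect to require the most care is the bookkeeping in the previous paragraph — confirming that the slice-wise constructions of $\M ap$ and $\E q$ really reproduce the definitions of $\M ap_T$ and $\E q_T$, and that passing to slices preserves presentability in the $(\infty,1)$-categorical sense — but none of this introduces a genuinely new difficulty beyond what is already carried out in \cite{GK17}.
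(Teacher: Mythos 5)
Your argument is correct and is exactly the one the paper intends: the paper states this as an ``obvious corollary'' of the two preceding propositions without writing out a proof, the implicit reasoning being precisely your reduction to the slice $\P_{/T}$ (which inherits Cartesian closure from local Cartesian closure of $\P$ and inherits presentability), together with the identification of $\M ap_T(f,g)$ and $\E q_T(f,g)$ with the absolute constructions performed internally to $\P_{/T}$. Your bookkeeping of the product in the slice as the fiber product over $T$ and of equivalences in the slice as equivalences of underlying morphisms is the right way to make this precise.
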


 We now have all the ingredients to define univalent families.
 
  Let $\P$ be a locally Cartesian closed presentable quasi-category.
  Moreover, let $p: X \to S$ be a fixed map in $\P$.
 
 \begin{defone}
 \cite[3.1]{GK17}
  Let $\E q_{/S}(X) = \E q_{/ S \times S}(\pi_1^*X,\pi_2^*X)$ as a right fibration and let $\underline{Eq}_{/S}(X)$
  be the representing object in $\P_{/S \times S}$.
 \end{defone}

 \begin{remone}
  Notice there is a map from the identity right fibration $S \to \underline{Eq}_{/S}(X)$, which sends each object $T \to S $ in $\P_{/S}$
  to the identity map in $\E q_{S \times S}(X \times_S T, X \times_S T)$.
 \end{remone}
 
 \begin{defone}
 \cite[3.2]{GK17}
  The map $p:X \to S$ is a univalent family if the map $$\delta_S: S \to \underline{Eq}_{/S}(X)$$ is an equivalence.
 \end{defone}
 
 Now what goes wrong if the we don't have the presentability condition? 
 We can define the object $\underline{Map}_{/T}(f,g)$ as that only require the category to be locally Cartesian closed.
 However, we cannot define $\underline{Eq}_{/T}(f,g)$ as we only know it exists because of presentability.
 More precisely, we are trying to build a specific subobject of  $\underline{Map}_{/T}(f,g)$ that represents the equivalences.
 However, there is no method to build subobjects in a higher category without presentability.
 \par 
 The way to adjust this is to take the same approach we took when we wanted to define $W_{hoequiv}$ for a given Segal object $W$.
 Instead of using subobjects we used pullbacks to define it (Subsection \ref{Subsec Homotopy Equivalences in Segal Objects}).
 Thus the goal is to construct a Segal object out of the given map and then using the completeness 
 condition to define univalence.
 
 \subsection{Univalence via Completeness} \label{Subsec Univalence vs Completeness}
 The goal of this subsection is to show how we can use complete Segal objects to define univalent maps in a locally Cartesian closed 
 higher category. 
 The definition is quite technical and requires the following steps:
 \begin{enumerate}
  \item Defining a Cartesian fibration for each map.
  \item Modifying the Cartesian fibration to a Segal Cartesian fibration.
  \item Showing that the Segal Cartesian fibration came from a Segal object.
  \item Using the Segal object to define univalent maps.
 \end{enumerate}

 
 \begin{remone}
  For this subsection let $\C$ be a locally Cartesian closed CSS and let $p: E \to B$ be a fixed map in $\C$.
 \end{remone}

 {\bf Defining a Cartesian fibration}:
 In the first part we need to define an important Cartesian fibration out of $p$.
 
%

 \begin{defone}
  \cite[Definition 7.68]{Ra17b}
  Let $t: \O_{\C} \to \C$ be the right fibration over $\C$ which has objects morphisms in $\C$ and has morphisms pullback squares in $\C$.
 \end{defone}
%
%
%
 The right fibration gives us following diagram 
 
 \begin{center}
  \comsq{(\O_\C)_{/p}}{\O_\C}{\C_{/B}}{\C}{\pi_p}{}{}{}
 \end{center}
 
 We use this commutative square to give following definition.
 
 \begin{defone}
  Let $\O^{(p)}_\C$ be the full subcategory of $\O_\C$ generated by all objects in the image of $\pi_p$.
 \end{defone}
 
 We call $\O^{(p)}_{\C}$ the sub category of morphisms generated by pullbacks of $p: E \to B$.
 

 \begin{lemone}
  The map $t: \O^{(p)}_{\C} \to \C$ is a right fibration.
 \end{lemone}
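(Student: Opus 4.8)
The plan is to verify the two conditions of Definition \ref{Def Right Fibration} directly, using that $\O_\C \to \C$ is already known to be a right fibration. Since $\O^{(p)}_\C$ is by construction a full subcategory of $\O_\C$, the map $t\colon \O^{(p)}_\C \to \C$ factors as the inclusion $\O^{(p)}_\C \hookrightarrow \O_\C$ followed by the right fibration $t\colon \O_\C \to \C$. The whole argument hinges on one structural fact: the collection of objects of $\O_\C$ in the equivalence-closure of the image of $\pi_p$ — equivalently, the morphisms of $\C$ arising as a base change of $p\colon E \to B$ — is closed under the cartesian edges of the right fibration $\O_\C \to \C$. Here I use that the cartesian edges of $\O_\C \to \C$ over a morphism $g\colon c' \to c$ of $\C$ are precisely the pullback squares with bottom edge $g$, and that in any right fibration every edge of the total space is cartesian.

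First I would establish this closure property. Let $X \to Y$ be an object of $\O^{(p)}_\C$, so $X \simeq Y \times_B E$ for some classifying map $Y \to B$, and let $g\colon Y' \to Y$ be any morphism of $\C$. The cartesian lift of $g$ with target $(X \to Y)$ is the pullback square whose source is $Y' \times_Y X$. By the pasting lemma for pullback squares, $Y' \times_Y X \simeq Y' \times_Y (Y \times_B E) \simeq Y' \times_B E$, the base change of $p$ along the composite $Y' \to Y \to B$; hence the source again lies in $\O^{(p)}_\C$. The same computation, together with the fact that ``being a base change of $p$'' is invariant under equivalence in $\O_\C$, shows that $\O^{(p)}_\C$ is closed under equivalences and under all edges of $\O_\C$ whose target it contains.

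With that in hand the homotopy-pullback condition follows formally. Because $\O^{(p)}_\C$ is a full subcategory, $(\O^{(p)}_\C)^{F(1)}$ is the subspace of $(\O_\C)^{F(1)}$ on edges both of whose endpoints lie in $\O^{(p)}_\C$; by the closure property this equals the subspace of edges whose \emph{target} lies in $\O^{(p)}_\C$, i.e. $(\O_\C)^{F(1)} \times_{\O_\C} \O^{(p)}_\C$ along the target evaluation $t\colon (\O_\C)^{F(1)} \to \O_\C$. Pulling the homotopy pullback square that witnesses $\O_\C \to \C$ being a right fibration back along $\O^{(p)}_\C \hookrightarrow \O_\C$ then yields exactly the square of Definition \ref{Def Right Fibration} for $\O^{(p)}_\C \to \C$. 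For the Reedy-fibration condition I would note that $\O^{(p)}_\C$ is obtained from the Reedy fibrant simplicial space $\O_\C$ by passing to the full simplicial subspace spanned by a collection of path components of its space of objects that is closed under equivalence, so Reedy fibrancy of $\O^{(p)}_\C$ and of the map $\O^{(p)}_\C \to \C$ is inherited levelwise from $\O_\C \to \C$.

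The step I expect to be the main obstacle is making the ``full subcategory closed under cartesian edges'' reasoning precise within the simplicial-space model: in particular, pinning down that ``the full subcategory generated by the image of $\pi_p$'' really means the one spanned by the equivalence-closure of that image, and checking that the matching maps of $\O^{(p)}_\C$ are the restrictions of those of $\O_\C$, so that both the Reedy fibrancy and the identification of $(\O^{(p)}_\C)^{F(1)}$ above go through on the nose. Everything else is the pasting lemma for pullbacks together with formal manipulation of the right-fibration square.
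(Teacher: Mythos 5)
Your proposal is correct and follows essentially the same route as the paper: both arguments reduce to showing that an edge of $\O_{\C}$ whose target is a base change of $p$ automatically has its source a base change of $p$ (the pasting lemma for pullback squares), so that the subspace of edges with target in $\O^{(p)}_{\C}$ coincides with the subspace of edges with both endpoints in $\O^{(p)}_{\C}$, and the right-fibration square for $\O^{(p)}_{\C} \to \C$ is obtained by pulling back the one for $\O_{\C} \to \C$. Your extra care about Reedy fibrancy and the equivalence-closure of the image of $\pi_p$ is a reasonable elaboration of details the paper leaves implicit, but it does not change the argument.
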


 \begin{proof}
  We have following diagram 
  \begin{center}
   \begin{tikzcd}[row sep=0.5in, column sep=0.5in]
    (\O^{(p)}_{\C})_1 \arrow[r] \arrow[dr, bend right = 20, hookrightarrow] & 
    Pb \arrow[dr, phantom, "\ulcorner", very near start] \arrow[d, hookrightarrow] \arrow[r] & 
    (\O^{(p)}_{\C})_0 \arrow[d, hookrightarrow] \\
     & (\O_{\C})_1 \arrow[r, "t"] & (\O_{\C})_0
   \end{tikzcd}
  \end{center}
  Thus it suffices to show that these two are the same subspace of $(\O_{\C})_1$.
  First, notice that  $(\O_{\C})_1$ is the space of all pullback squares in $\C$.
  \begin{center}
   \begin{tikzcd}[row sep=0.5in, column sep=0.5in]
    A \arrow[r] \arrow[d, "f"] & B \arrow[d, "g"] \\
    X \arrow[r] & Y 
   \end{tikzcd}
  \end{center}
  $Pb$ is the subspace of $(\O_{\C})_1$ generated by all pullback squares such that $g$ is a pullback of $p$.
  On the other hand $(\O^{(p)})_1$ is the subspace of $\O_1$ generated by all points such that both $f$ and $g$ are pullbacks of $p$.
  \par 
  However the composition of two pullback squares again gives us a pullback square. 
  Thus $g$ is a pullback of $p$ if and only if $f$ and $g$ are pullback of $p$. 
  Hence, the they are both the same subspace of $(\O_{\C})_1$, which means that $\O_{\C}^{(p)}$ is a right fibration over $\C$.
 \end{proof}

 Having defined $\O_{\C}^{(p)}$ we can finally finish the first part and define our desired Cartesian fibration.
 
 \begin{defone}
  Let $(\C^{F(1)})^{(p)}$ be the sub-Cartesian fibration of $\C^{F(1)}$ generated by objects $\O_{\C}^{(p)}$. 
  Concretely, 
  $$((\C^{F(1)})^{(p)})_k = (\C^{F(1)})_k \underset{(\O_{\C})^{k+1}}{\times} (\O_{\C}^{(p)})^{k+1}$$
 Notice there is a natural inclusion map $(\C^{F(1)})^{(p)} \hookrightarrow \C^{F(1)}$.
 \end{defone}

 \begin{notone}
  For each object $D$ we denote the fiber of this Cartesian fibration by $(\C_{/D})^{(p)}$.
  We chose this notation as the fiber is the full subcategory of the over-category $\C_{/D}$ 
  generated by maps that can be obtained as a pullback of $p$.
  Using the same argument we denote the fiber of $\O_\C^{(p)}$ over $D$ as $(\O_{/D})^{(p)}$.
 \end{notone}

 {\bf Defining a Segal Cartesian Fibration}:
 The right fibration $\O_{\C}^{(p)}$ has a distinguished point, namely the map $E \to B$ 
 that lies over $B$ in $\C$.
 By the Yoneda Lemma, this point induces a map of right fibrations 
 $$s'_0 = (-)^*p: \C_{/B} \to \O_{\C}^{(p)}.$$
 Concretely, the map takes a morphism $f: D \to B$ to the pullback $f^*(p): f^*E \to D$.
 
 For the next part we first need a technical lemma. 
 
 \begin{lemone} \label{Lemma Arrow level one rep}
  The right fibration $((\C^{F(1)})^{(p)})_1$ is representable.
 \end{lemone}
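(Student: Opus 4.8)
The plan is to identify $((\C^{F(1)})^{(p)})_1$ with a representable Reedy right fibration by exhibiting an explicit simplicial object that represents it, and then invoke Theorem \ref{The Building Simp Obj} (``Building Simplicial Objects'') together with the representability criterion for Segal Cartesian fibrations. First I would unwind the definitions: an object of $((\C^{F(1)})^{(p)})_1$ over $D$ is a morphism in $\C_{/D}$ between two objects of $(\C_{/D})^{(p)}$, i.e. a commuting triangle whose two legs are pullbacks of $p$. Since the composite of pullback squares is a pullback square (the same fact used in the proof that $\O_{\C}^{(p)}\to\C$ is a right fibration), such a triangle is the same datum as a single morphism $g\colon A\to D$ together with a factorization $g = q\circ h$ where $q\colon B'\to D$ is a pullback of $p$ and $h$ is a morphism over $D$ — but in fact the whole triangle is already determined, up to contractible choice, by the pair of pullback-of-$p$ structures on its two legs and the map between them, so the fiber over $D$ is the space of arrows in the full subcategory $(\C_{/D})^{(p)}$.

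Next I would use local Cartesian closure of $\C$ to produce a representing object. Because $p\colon E\to B$ is fixed, pulling back along $D\to B\times\cdots$ and applying $\prod_D$ lets us build, for each $k$, an object of $\C$ that represents $((\C^{F(1)})^{(p)})_k$ — this is exactly the mechanism by which $\underline{Map}_T(f,g)$ was constructed in the presentable case, but now done fiberwise and with the pullback-of-$p$ constraint replacing a subobject. Concretely one shows the right fibration $((\C^{F(1)})^{(p)})_1\to\C$ has the form $\C_{/X_1}$ for a suitable $X_1$ obtained as $\prod$ over an appropriate base of a pullback of $p$ (or of the arrow object of $p$ in $\C^{F(1)}$) along the evaluation maps; representability of $((\C^{F(1)})^{(p)})_0$ is the analogous and easier statement, with $X_0$ built the same way from $p$ itself. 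The key input is that $(\C^{F(1)})^{(p)}$ is a Segal Cartesian fibration (it is a sub-Cartesian fibration of $\C^{F(1)}$, cut out levelwise by the Reedy right fibration $\O_{\C}^{(p)}$, and the Segal condition is inherited because composition of pullback squares is again a pullback square), so by Theorem \ref{The Rep Segal Cart} it is representable as soon as its $0$th and $1$st levels are.

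The main obstacle I expect is bookkeeping the pullback-of-$p$ constraint through the $\prod_D$ adjunction: one must check that the object produced by $\prod_D$ of the constrained family genuinely represents \emph{only} those arrows whose legs are pullbacks of $p$, rather than all arrows in $\C_{/D}$, and that this holds functorially in $D$ so that the resulting right fibration is the one we started with. This is where the argument that $\O_{\C}^{(p)}$ is a right fibration — in particular the observation that $f$ and $g$ are pullbacks of $p$ iff $g$ is — does the real work: it shows the constraint on a composable pair collapses to a constraint on the composite, which is precisely what makes the $\prod_D$-description close up correctly. Once representability of level $1$ (and level $0$) is in hand, the subsequent steps of the section — getting a Segal object, then a complete Segal object, then defining univalence via its completeness — follow the template already laid out.
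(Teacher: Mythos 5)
Your strategy points in the right direction---local Cartesian closure is indeed the engine, and the representing object is indeed built by a dependent product---but the proposal stops short of the actual content of the lemma. The paper's proof consists almost entirely of (i) writing down the explicit representing object $\M = (p \times id)_*(E \times E)$ in $\C_{/B \times B}$, equivalently the internal mapping object $(B \times E \to B \times B)^{(E \times B \to B \times B)}$, and (ii) verifying the universal property by a fiberwise computation: both $\C_{/\M}$ and $((\C^{F(1)})^{(p)})_1$ fiber over $map_{\C}(D,B) \times map_{\C}(D,B)$, and over a point $(f,g)$ the adjunction chain $map_{/B\times B}(D \times_{B\times B}(E\times B),\, B\times E) \simeq map_{/B}(f^*E,E) \simeq map_{/D}(f^*E,g^*E)$ identifies the two fibers; the identity of $\M$ then supplies the universal element $F_{\M}$ and a two-out-of-three argument finishes the proof. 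Your phrase ``$X_1$ obtained as $\prod$ over an appropriate base of a pullback of $p$ along the evaluation maps'' gestures at this but never names the base ($B \times B$), the object being pushed forward ($E \times E$ along $p \times id$), or the fiberwise verification, and these are precisely where the work lies.

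Two further problems. First, the appeal to Theorem \ref{The Rep Segal Cart} is circular at this stage: that theorem reduces representability of a Segal Cartesian fibration to representability of its levels $0$ and $1$, so it cannot be used to establish representability of level $1$; the present lemma is an \emph{input} to that theorem (via the later lemma that $(\C^{F(1)})^{(p,B)}$ is representable), not a consequence of it. Second, your claim that representability of $((\C^{F(1)})^{(p)})_0$ is ``the analogous and easier statement'' is false: $((\C^{F(1)})^{(p)})_0 \simeq \O_{\C}^{(p)}$, and its representability is equivalent to univalence of $p$ (this is the equivalence of conditions (2) and (4) in the theorem characterizing univalent maps). The paper sidesteps this by replacing level $0$ with $\C_{/B}$ when assembling the Segal Cartesian fibration $(\C^{F(1)})^{(p,B)}$, which is why only level $1$ needs the present lemma.
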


 \begin{proof}
   Let $\M = (p \times id)_*(E \times E)$ in $\C_{/B \times B}$. Alternatively we can define $\M$ 
 as the internal mapping object
 $$\M =  (B \times E \to B \times B)^{(E \times B \to B \times B)}.$$
 As it lives in $\C_{/B \times B}$ it comes with map $(s,t): \M \to B \times B$.
 The goal is to show that $((\C^{F(1)})^{(p)})_1$ is equivalent to $\C_{/ \M}$.  
 For that we first show that 
 $map_{\C}(D,\M)$ is equivalent to the fiber of $((\C^{F(1)})^{(p)})_1$ over $D$ for every object $D$.
 We will denote this fiber by $((\O_{/D})^{(p)})^{F(1)}$. 
 
 Note that we have diagrams
 
 \begin{center}
  \begin{tikzcd}[row sep=0.5in, column sep=0.5in]
   map_{\C}(D,\M) \arrow[dr, twoheadrightarrow] & & ((\O_{/D})^{(p)})^{F(1)} \arrow[dl, twoheadrightarrow] \\
    & map_{\C}(D,B) \times map_{\C}(D,B) &
  \end{tikzcd}
 \end{center}

 Thus it suffices to compare these spaces fiber-wise. Fix a point 
 $$(f,g): \Delta[0] \to map_{\C}(D,B) \times map_{\C}(D,B).$$
 The fiber of $((\O_{/D})^{(p)})^{F(1)}$ over $(f,g)$ is the space $map_{/D}(f^*E,g^*E)$. 
 We will show the other space has the same fiber.
 \par 
 By adjunction property, the fiber over $(f,g)$ is equivalent to 
 $$map_{/B \times B}( D \underset{(B \times B)}{\times} (E \times B), B \times E)$$
 First we understand the pullback $D \times_{(B \times B)} (E \times B)$.
 It is the limit of the diagram 
 \begin{center}
  \begin{tikzcd}[row sep=0.5in, column sep=0.5in]
   E \arrow[dr, "p"] & & D \arrow[dl, "f"] \arrow[dr, "g"] & & B \arrow[dl, "id_B"] \\
    & B & & B &
  \end{tikzcd}
 \end{center}
 Taking the pullback of this diagram we get following diagram
 \begin{center}
  \begin{tikzcd}[row sep=0.5in, column sep=0.5in]
   & & f^*E \arrow[dl, "id_{f^*E}"] \arrow[dr, "f^*p"] & & \\
   & f^*E \arrow[dl] \arrow[dr, "f^*p"] & & D \arrow[dl, "id_D"] \arrow[dr, "g"] & \\
   E \arrow[dr, "p"] & & D \arrow[dl, "f"] \arrow[dr, "g"] & & B \arrow[dl, "id_B"] \\
    & B & & B &
  \end{tikzcd}
 \end{center}
 
 Notice from the limit diagram above we can tell that the map $f^*E \to B \times B$ are the maps $(f \circ (f^*p), g \circ (f^*p))$.
 Next we know that 
 $$map_{/B \times B}( D \underset{(B \times B)}{\times} (E \times B), B \times E) = map_{/B}(f^*E,B) \times map_{/B}(f^*E,E) 
 \simeq map_{/B}(f^*E,E)$$
 as the space $map_{/B}(f^*E,B)$ is contractible.
 Finally by adjunction we have 
 $$map_{/B}(f^*E,E) \simeq map_{/D}(f^*E,g^*E).$$
 Putting this all together we have following chain of equivalences.
 \begin{center}
  \begin{tikzcd}[row sep=0.5in, column sep=0.5in]
   map_{/B \times B}(D, \M) &  map_{/B \times B}(f^*E, B \times E) \arrow[l, "(p \times id)_*"' ,"\simeq"] 
   \arrow[r, "\pi_1","\simeq"'] & 
 map_{/B}(f^*E,E) \arrow[r, "g^*", "\simeq"'] & map_{/D}(f^*E,g^*E)
  \end{tikzcd}
 \end{center}

 In particular, if $D = \M$, then we have a distinguished point on the left hand side ($id_{\M}: \M \to \M$)
 that gives us a point in $map_{/\M}(s^*E,t^*E)$. We will denote this map by $F_{\M} :F(0) \to ((\C^{F(1)})^{(p)})_1$. 
 This induces a map 
 $$ (F_{\M})^*: \C_{/\M} \to ((\C^{F(1)})^{(p)})_1.$$
 We will show this map is a Reedy equivalence. As both maps are right fibrations over $\C$ it suffices to prove the equivalence fiberwise.
 So we have to show that the map is an equivalence
 $$(F_{\M})^*_D : map_{\C}(D,\M) \to ((\O_{/D})^{(p)})^{F(1)}$$
 Again both are Kan fibrations over $map(D,B) \times map(D,B)$ and so we can check whether the map is an equivalence fiberwise.
 Thus we can fix maps $f,g: D \to B$ and now we have to prove the map 
 $$(F_{\M})^*_{(f,g)} : map_{/B \times B}(D,\M) \to map_{/D}(f^*E,g^*E)$$
 is an equivalence. Notice the map takes a map $h: D \to \M$ to $h^*s^*E \to h^*t^*E$ over $D$ (where we are using the fact that $sh =f$ and $th = g$.
 However, the map fits into following commutative diagram:
 \begin{center}
  \begin{tikzcd}[row sep=0.5in, column sep=0.5in]
    & map_{/B \times B}(f^*E, B \times E) \arrow[dl, "(p \times id)_*"', "\simeq"] \arrow[dr, "g^* \pi_1", "\simeq"'] & \\
    map_{/B \times B}(D, \M) \arrow[rr, "(F_{\M})^*_{(f,g)}"] & & map_{/D}(f^*E,g^*E)
  \end{tikzcd}
 \end{center}
 Thus by two out of three the bottom map is an equivalence of spaces.
 This is exactly the statement we wanted to show and hence we are done.
 \end{proof}

 This lemma plays an important role in the next part. Right now we need following corollary.
 
 \begin{corone}
  There exists a map 
  $$((\C^{F(1)})^{(p)})_1 \to \C_{/ B \times B}$$
 \end{corone}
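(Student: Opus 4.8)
The plan is to read the map off directly from the representability statement that has just been established. By Lemma \ref{Lemma Arrow level one rep}, and in fact by its proof, the right fibration $((\C^{F(1)})^{(p)})_1$ is not merely abstractly representable: the proof exhibits a concrete object $\M = (p \times id)_*(E \times E)$, which by construction lives in $\C_{/B \times B}$ and hence carries a canonical structure map $(s,t): \M \to B \times B$ in $\C$, together with a Reedy equivalence $(F_{\M})^*: \C_{/\M} \to ((\C^{F(1)})^{(p)})_1$ of right fibrations over $\C$.

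The first step is then to turn the structure map $(s,t)$ into a morphism of representable right fibrations. By the Yoneda Lemma for right fibrations, for objects $c,c'$ in $\C$ a map of right fibrations $\C_{/c} \to \C_{/c'}$ over $\C$ is the same datum as a map $c \to c'$ in $\C$ (equivalently, $Map_{\C}(\C_{/c},\C_{/c'})$ is the fiber of $\C_{/c'}$ over $c$, i.e. $map_{\C}(c,c')$). Applied to $(s,t): \M \to B \times B$ this yields a map $\C_{/\M} \to \C_{/B \times B}$ over $\C$ — concretely, postcomposition, sending an arrow $Y \to \M$ to $Y \to \M \xrightarrow{(s,t)} B \times B$. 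The second step is simply to precompose with a homotopy inverse of $(F_{\M})^*$, producing
$$ ((\C^{F(1)})^{(p)})_1 \xrightarrow{\ \ \simeq\ \ } \C_{/\M} \longrightarrow \C_{/B \times B}, $$
which is the asserted map.

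I do not expect a genuine obstacle here; this is a formal corollary of the previous lemma. The one point worth flagging is bookkeeping: one should note that, fiberwise over an object $D$, this map is precisely the ``endpoints'' projection $((\O_{/D})^{(p)})^{F(1)} \to map_{\C}(D,B) \times map_{\C}(D,B)$ that appears in the proof of Lemma \ref{Lemma Arrow level one rep}, carrying a morphism $f^*E \to g^*E$ over $D$ to the pair $(f,g)$; that $(F_{\M})^*$ intertwines the two descriptions is exactly the compatibility with the two projections to $B$ recorded there, so no new verification is needed. One could instead try to describe the map directly, sending a morphism in the fiber $(\O_{/D})^{(p)}$ to the pair of base maps of its source and target; but since objects of $(\O_{/D})^{(p)}$ carry no canonical choice of such a base map, routing the construction through the representing object $\M$ is the clean way to make it canonical and natural in $D$.
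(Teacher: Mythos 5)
Your proposal is correct and follows essentially the same route as the paper: invoke the representability $\C_{/\M} \simeq ((\C^{F(1)})^{(p)})_1$ from the previous lemma, apply the Yoneda lemma to the structure map $\M \to B \times B$ to get $\C_{/\M} \to \C_{/B \times B}$, and transfer along the equivalence (the paper phrases this as lifting along the equivalence rather than precomposing with an inverse, which is the same thing). The extra fiberwise bookkeeping you flag is not needed for the statement but is consistent with the paper's later use of the map.
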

 
 \begin{proof}
  By the lemma above there exists an equivalence 
  $$\C_{/ \M} \xrightarrow{ \ \ \simeq \ \ } ((\C^{F(1)})^{(p)})_1 .$$
  Moreover, by definition of $\M$ there exists a map $\M \to B \times B$, which by the Yoneda lemma 
  induces a map of right fibrations 
  $$ \C_{/ \M} \to \C_{/ B \times B}$$
  By the previous equivalence we can lift this equivalence to a map 
  \begin{center}
   \begin{tikzcd}[row sep=0.5in, column sep=0.5in]
    \C_{/ \M} \arrow[r] \arrow[d, "\simeq"] & \C_{/ B \times B} \\
    ((\C^{F(1)})^{(p)})_1 \arrow[ur, dashed] & 
   \end{tikzcd}
  \end{center}

 \end{proof}

 We can precompose the map above with the map $((\C^{F(1)})^{(p)})_0 \to ((\C^{F(1)})^{(p)})_1$ to get the map 
 $$(s', t' ): \O_\C^{(p)} = ((\C^{F(1)})^{(p)})_0 \to \C_{/ B \times B}.$$

 Combining the two maps we get following commutative diagram.
 
 \begin{center}
  \begin{tikzcd}[row sep = 0.5in, column sep =0.5in]
   & \O_\C^{(p)} \arrow[d, "(s' \comma t' )"] \\
   \C_{/B} \arrow[r, "d"] \arrow[ur, "s_0'"] & \C_{/B} \times \C_{/ B}
  \end{tikzcd}
 \end{center}
 The goal is to show that the map $d: \C_{/B} \to \C_{/B} \times \C_{/ B}$ is induced by the diagonal map 
 $\Delta_B: B \to B \times B$.
 \par 
 Using the information of the previous lemma we can expand this diagram to the diagram of the following form.
 \begin{center}
  \begin{tikzcd}[row sep = 0.5in, column sep =0.5in]
                                        & \O_\C^{(p)} \arrow[d, "(s' \comma t' )"] \arrow [r] & (\C^{F(1)})_1^{(p)} \arrow[dl] \\
   \C_{/B} \arrow[r, "d"] \arrow[ur, "s_0'"] & \C_{/B} \times \C_{/ B} & \C_{/ \M} \arrow[u, "\simeq"] \arrow[l]
  \end{tikzcd}
 \end{center}
 As the maps above are right fibrations and $\C_{/B}$ is representable it suffices to check the image of the map $id_B: B \to B$ in $\C$. 
 So, first of all we take the fiber of the right fibrations over $B$ which gives us a diagram of spaces.
 
 \begin{center}
  \begin{tikzcd}[row sep = 0.5in, column sep =0.5in]
   & (\O_{/B})^{(p)} \arrow[d, "(s'_B \comma t'_B )"] \arrow[r] & (\C_{/B})^{(p)}_1 \arrow[dl] \\
   map(B,B) \arrow[r, "d_B"] \arrow[ur, "(s_0)_B'"] & map(B,B) \times map(B,B) & map(B,\M) \arrow[u, "\simeq"] \arrow[l]
  \end{tikzcd}
 \end{center}
 The goal now is to trace the image of the identity map through the various maps until we end up in $map(B,B) \times map(B,B)$.
 The image of $id_B$ in $(\O_{/B})^{(p)}$ is just the map $p:E \to B$.
 The inclusion map to $(\C_{/B})^{(p)}_1$ takes it to the commutative triangle
 \begin{center}
  \begin{tikzcd}[row sep=0.5in, column sep =0.5in]
   E \arrow[rr] \arrow[dr] & & E \arrow[dl] \\
    & B & 
  \end{tikzcd}
 \end{center}
 The map $(\C_{/B})^{(p)}_1 \to map(B,B) \times map(B,B)$ then takes the commutative triangle above
 first to the source and target $(p: E \to B, p: E \to B)$ and then to the maps that created them as pullbacks
 $(id_B: B \to B, id_B : B \to B)$. 
 Thus the map $d: B \to B \times B$ is really $\Delta_B: B \to B \times B$. 
 \par 
 Using this new information our previous diagram now turns into the following.
 \begin{center}
  \begin{tikzcd}[row sep = 0.5in, column sep =0.5in]
   & \O_\C^{(p)} \arrow[d, "(s' \comma t' )"] \\
   \C_{/B} \arrow[r, "(\Delta_B)^*"] \arrow[ur, "s_0'"] & \C_{/B} \times \C_{/ B}
  \end{tikzcd}
 \end{center}
  \par 
 Combining all the information we gathered until now gives us an ``extended" simplicial diagram.
 \begin{center}
 \begin{tikzcd}[row sep=0.5in, column sep=0.5in]
   \C_{/B} \arrow[r, shorten >=1ex,shorten <=1ex, "s_0'" near start]
   &  ((\C^{F(1)})^{(p)})_0  
   \arrow[l, shift left=1.2, "s'" near start] \arrow[l, shift right=1.2, "t'"' near start]  
   \arrow[r, shorten >=1ex,shorten <=1ex, "s_0" near start]
   &  ((\C^{F(1)})^{(p)})_1
   \arrow[l, shift left=1.2, "s" near start] \arrow[l, shift right=1.2, "t"' near start] 
   \arrow[r, shift right, shorten >=1ex,shorten <=1ex ] \arrow[r, shift left, shorten >=1ex,shorten <=1ex] 
   &  ((\C^{F(1)})^{(p)})_2
   \arrow[l] \arrow[l, shift left=2, "d_2"] \arrow[l, shift right=2, "d_0"'] 
   \arrow[r, shorten >=1ex,shorten <=1ex] \arrow[r, shift left=2, shorten >=1ex,shorten <=1ex] \arrow[r, shift right=2, shorten >=1ex,shorten <=1ex]
   & \cdots 
   \arrow[l, shift right=1] \arrow[l, shift left=1] \arrow[l, shift right=3] \arrow[l, shift left=3] 
 \end{tikzcd}
 \end{center}
 Using the fact that $t's_0' = id_B$ and $s's_0' = id_B$ we can compose those arrows to get an actual simplicial object.
 \begin{center}
 \begin{tikzcd}[row sep=0.5in, column sep=1in]
   \C_{/B} \arrow[r, shorten >=1ex,shorten <=1ex, "\sigma_0 = s_0s_0'" near start]
   & ((\C^{F(1)})^{(p)})_1
   \arrow[l, shift left=1.2, "\sigma = s's" near start] \arrow[l, shift right=1.2, " \tau = t't"' near start] 
   \arrow[r, shift right, shorten >=1ex,shorten <=1ex ] \arrow[r, shift left, shorten >=1ex,shorten <=1ex] 
   & ((\C^{F(1)})^{(p)})_2 
   \arrow[l] \arrow[l, shift left=2, "d_2"] \arrow[l, shift right=2, "d_0"'] 
   \arrow[r, shorten >=1ex,shorten <=1ex] \arrow[r, shift left=2, shorten >=1ex,shorten <=1ex] \arrow[r, shift right=2, shorten >=1ex,shorten <=1ex]
   & \cdots 
   \arrow[l, shift right=1] \arrow[l, shift left=1] \arrow[l, shift right=3] \arrow[l, shift left=3] 
 \end{tikzcd}
 \end{center}
%
 We will show it is a Segal Cartesian fibration. First of all it is clearly a Reedy right fibration as it is a level-wise right fibration.
 Thus we only have to prove it satisfies the Segal condition.
 For that we need to find the pullback of the diagram 
 \begin{center}
  \begin{tikzcd}[row sep=0.5in, column sep=0.5in]
   & (\C^{F(1)})_1^{(p)} \arrow[d, "\tau"]\\
   (\C^{F(1)})_1^{(p)} \arrow[r, "\sigma"] & \C_{/B}
  \end{tikzcd}
 \end{center}
 We can extend the diagram above to the diagram
 \begin{center}
  \begin{tikzcd}[row sep=0.5in, column sep=0.5in]
   & (\C^{F(1)})_1^{(p)} \arrow[d, "\tau"] \arrow[ddr, bend left = 20, "t"] \\
   (\C^{F(1)})_1^{(p)} \arrow[r, "\sigma"] \arrow[drr, bend right = 20, "s"] & \C_{/B} \arrow[dr, hookrightarrow, "s_0'"] \\
   & & (\C^{F(1)})_0^{(p)}
  \end{tikzcd}
 \end{center}
 The map $s_0'$ is an injection and thus both squares have the same pullback.
 However, $(\C^{F(1)})^{(p)}$ is a Cartesian fibration and thus satisfies the Segal condition. 
 So, both squares are pullback squares:
 \begin{center}
  \begin{tikzcd}[row sep=0.5in, column sep=0.5in]
   (\C^{F(1)})_2^{(p)} \arrow[d] \arrow[r] \arrow[dr, phantom, "\ulcorner", very near start] & 
   (\C^{F(1)})_1^{(p)} \arrow[d, "\tau"] \arrow[ddr, bend left = 20, "t"] \\
   (\C^{F(1)})_1^{(p)} \arrow[r, "\sigma"] \arrow[drr, bend right = 20, "s"] & \C_{/B} \arrow[dr, hookrightarrow, "s_0'"] \\
   & & (\C^{F(1)})_0^{(p)}
  \end{tikzcd}
 \end{center}
 We can use a similar argument to show that the map 
 $$(\C^{F(1)})_k^{(p)} \xrightarrow{ \ \ \simeq \ \ } (\C^{F(1)})_1^{(p)}  \underset{\C_{/B}}{\times} ... 
 \underset{\C_{/B}}{\times} (\C^{F(1)})_1^{(p)}$$
 Having dones all this work we finally give following definition.

 \begin{defone}
  We call the Segal Cartesian fibration described above $(\C^{F(1)})^{(p,B)}$.
 \end{defone}

 We are now ready to move on to the next step.
 
 {\bf The Segal Cartesian fibration comes from a Segal object}:
 In this part we want to show that the Segal Cartesian fibration from the previous part is representable.
 
 \begin{lemone}
  The Segal Cartesian fibration $(\C^{F(1)})^{(p,B)}$ is representable.
 \end{lemone}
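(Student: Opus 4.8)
The plan is to invoke Theorem~\ref{The Rep Segal Cart}: a Segal Cartesian fibration over a CSS with finite limits is representable if and only if its levels $0$ and $1$ are representable right fibrations. Since $\C$ is locally Cartesian closed it has in particular all finite limits, so the hypothesis of that theorem applies to $(\C^{F(1)})^{(p,B)}$, and it remains only to check the two levels.

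First I would recall that, by the way $(\C^{F(1)})^{(p,B)}$ was assembled in the previous part, its level $0$ is the over-category $\C_{/B}$. This is tautologically a representable right fibration over $\C$ --- it is of the form $\C_{/c}$ with $c = B$ --- so no work is needed here.

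Second, the level $1$ of $(\C^{F(1)})^{(p,B)}$ is $((\C^{F(1)})^{(p)})_1$, and its representability is exactly the content of Lemma~\ref{Lemma Arrow level one rep}: setting $\M = (p \times id)_*(E \times E)$ in $\C_{/B \times B}$ we have a Reedy equivalence $\C_{/\M} \xrightarrow{\ \simeq\ } ((\C^{F(1)})^{(p)})_1$, so $((\C^{F(1)})^{(p)})_1$ is represented by $\M$.

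With both levels shown representable, Theorem~\ref{The Rep Segal Cart} immediately yields that $(\C^{F(1)})^{(p,B)}$ is representable. I expect no genuine obstacle in the present statement: the substantive work was the identification of the level $1$ object carried out in Lemma~\ref{Lemma Arrow level one rep}, and once that is in hand this is a direct application of the representability criterion. (This in turn sets up an application of Theorem~\ref{The Segal Obj out of two obj}, which will produce the Segal object underlying $(\C^{F(1)})^{(p,B)}$ whose levels $0$ and $1$ are $B$ and $\M$, to be used in the next part.)
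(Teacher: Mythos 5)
Your proof is correct and matches the paper's argument: both reduce the claim to the representability of levels $0$ and $1$, identifying level $0$ as $\C_{/B}$ and invoking Lemma~\ref{Lemma Arrow level one rep} for level $1$. The only cosmetic difference is that the paper cites Theorem~\ref{The Segal Obj out of two obj} where you cite Theorem~\ref{The Rep Segal Cart}; these are interchangeable here (the former is proved via the latter), and your citation is if anything the more direct one for the bare representability claim.
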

 
 \begin{proof}
  By Theorem \ref{The Segal Obj out of two obj} it suffices to prove that $((\C^{F(1)})^{(p,B)})_0$ and $((\C^{F(1)})^{(p,B)})_1$ 
  are representable right fibrations. $((\C^{F(1)})^{(p,B)})_0 = \C_{/B}$ and so is representable by definition.
  And $((\C^{F(1)})^{(p,B)})_1$ is representable by Lemma \ref{Lemma Arrow level one rep}.
  Hence we are done.
 \end{proof}
 
 This lemma gives us following definition.
 
 \begin{defone}
  We denote any choice of Segal object that represents $(\C^{F(1)})^{(p)}$ as $\n(p)$.
 \end{defone}

 \begin{remone}
  The proof above shows that $\n(p)_0 \simeq B$, $\n(p)_1 \simeq \M $.
  Moreover, $\n(p)_n$ is equivalent to the limit $\M \times_B ... \times_B \M$.
 \end{remone}
 
 \begin{remone}
  The notation $\n (p)$ is chosen to remind the reader that the construction is a generalized nerve construction.
 \end{remone}

 We have gathered all the ingredients to move on to the final part.
 
 
 {\bf Defining Univalent Maps}:
 We are finally in a position to give a definition of univalence.

 \begin{defone} \label{Def Univalent map}
  A map $p: E \to B$ is univalent if the Segal object $\n(p)$ is complete.
 \end{defone}
 

 Let us see two basic examples.
 
 \begin{exone}
  Let $id_B: B \to B$ be an identity map. Then $\n(p)_n$ is equivalent to $B^n$, the $n$-fold product of $B$. 
  In this case, $\n(p)_{hoequiv}$ is $B \times B$ and so $id_B$ is univalent if and only if the map $\Delta: B \to B \times B$.
  is an equivalence. This by definition means $B$ is a $(-1)$-truncated object. 
  Hence, $id_B$ is univalent if and only if $B$ is $(-1)$-truncated.
  In particular, the identity map of the final object $* \to *$ is univalent.
 \end{exone}
 
 \begin{exone} \label{Ex Uni over point}
  Let $F \to *$ be the map to the final object. In this case $\n(p)_1 = F^F$, the mapping object of maps from $F$ to itself.
  Moreover, $map(*,\n(p)_{hoequiv}) \simeq hoequiv_{\C}(F,F)$ and so $F \to *$ is univalent if and only if the space of self-equivalences
  of $F$ is contractible. 
 \end{exone}


 There are several equivalent ways to define univalence.
 First here is a basic, but valuable lemma.
 \begin{lemone}
  There is an equivalence of right fibrations.
  $$ (\C_{/\n(p)})_{hoequiv} \simeq \O^{(p)}_{\C}$$
 \end{lemone}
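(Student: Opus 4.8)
The plan is to verify the equivalence fiberwise over $\C$. Both $(\C_{/\n(p)})_{hoequiv}$ and $\O^{(p)}_{\C}$ are right fibrations over $\C$, hence fibrant in the contravariant model structure, so (exactly as in Theorem \ref{The Covariant Model Structure} for the covariant side) it suffices to produce, naturally in an object $D$ of $\C$, an equivalence between the fibers over $D$.

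First I would unwind the fiber $(\C_{/\n(p)})_D$. Since $\n(p)$ is by construction a Segal object representing the Segal Cartesian fibration $(\C^{F(1)})^{(p,B)}$, with $\n(p)_0 \simeq B$ and $\n(p)_1 \simeq \M$, Lemma \ref{Lemma Arrow level one rep} identifies $(\C_{/\n(p)})_D$ with the Segal space whose objects are the maps $f : D\to B$ and whose morphism space from $f$ to $g$ is $map_{/D}(f^*E,g^*E)$, with composition given by composition of maps over $D$ (via $\n(p)_2 \simeq \M\times_B\M$ and $d_1$). Thus $(\C_{/\n(p)})_D$ presents the $\infty$-category $(\C_{/D})^{(p)}$ of pullbacks of $p$ over $D$, and the space of objects of its completion is exactly $(\O_{/D})^{(p)}$, the fiber of $\O^{(p)}_{\C}$ over $D$. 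A convenient way to package the global statement is to note that the map $s_0' = (-)^*p : \C_{/B}\to \O^{(p)}_{\C}$ exhibits $(\C^{F(1)})^{(p)}$ as the completion of $(\C^{F(1)})^{(p,B)}\simeq \C_{/\n(p)}$: by the construction of $(\C^{F(1)})^{(p,B)}$ these two Segal Cartesian fibrations agree at every positive level (in particular both have level one $\C_{/\M}$), only the level-zero object being changed from $\C_{/B}$ to $\O^{(p)}_{\C}$ along $s_0'$, so the comparison map is fiberwise essentially surjective (every pullback of $p$ over $D$ is some $f^*E$) and fiberwise fully faithful (the morphism spaces are unchanged). Since $(\C^{F(1)})^{(p)}$ is a Cartesian fibration it is complete, so its object of equivalences coincides with its level-zero object $((\C^{F(1)})^{(p)})_0 = \O^{(p)}_{\C}$.

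Combining these two steps, $(\C_{/\n(p)})_{hoequiv}$ — which is computed through the completion $(\C^{F(1)})^{(p)}$ of $\C_{/\n(p)}$ — is equivalent to $\O^{(p)}_{\C}$; on the fiber over $D$ the equivalence sends a homotopy equivalence $f^*E\xrightarrow{\ \simeq\ }g^*E$ over $D$ to the object $(f^*E\to D)$ of $(\O_{/D})^{(p)}$, manifestly natural in $D$. I expect the crux to be the middle step: checking carefully that $(\C^{F(1)})^{(p,B)}$ and $(\C^{F(1)})^{(p)}$ agree away from level zero and that the comparison along $s_0'$ is fiberwise fully faithful and essentially surjective, and — the point most easily glossed over — confirming that $(\C_{/\n(p)})_{hoequiv}$ is genuinely the right fibration $D\mapsto (\O_{/D})^{(p)}$ computed from the completed (hence complete) fibration, and not the strictly larger space of equivalences sitting inside the non-complete $\C_{/\n(p)}$ itself.
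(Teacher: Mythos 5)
The paper records this lemma without proof, so there is nothing to compare your argument against; assessed on its own terms, it has a genuine gap at precisely the point you flag in your last sentence, and the gap cannot be closed because the identification you need there fails. Your chain is $(\C_{/\n(p)})_{hoequiv} \simeq ((\C^{F(1)})^{(p)})_{hoequiv} \simeq ((\C^{F(1)})^{(p)})_0 = \O^{(p)}_{\C}$. The second equivalence is fine: $(\C^{F(1)})^{(p)}$ is a Cartesian fibration, hence fiberwise complete, so its object of equivalences agrees with its zeroth level. The problem is the first. The object of homotopy equivalences of a Segal space is not invariant under DK-equivalences (fiberwise fully faithful and essentially surjective maps): for a Segal space $W$ with completion $\widehat{W}$ one has $\widehat{W}_{hoequiv} \simeq \widehat{W}_0 \simeq$ the geometric realization of the entire groupoid of equivalences of $W$, whereas $W_{hoequiv}$ is only the space of $1$-simplices of that groupoid, and the comparison map $W_{hoequiv} \to \widehat{W}_{hoequiv}$ is an equivalence exactly when $W$ was already complete. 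So your argument identifies $\O^{(p)}_{\C}$ with the zeroth level of the completion of $\C_{/\n(p)}$, not with $(\C_{/\n(p)})_{hoequiv}$, and the two agree precisely when $p$ is univalent --- which is what the lemma is supposed to help characterize. A concrete check: for $p = id_B$ the paper's own example computes $\n(id_B)_{hoequiv} \simeq B \times B$, so the left-hand side of the lemma has fiber $map_{\C}(D,B) \times map_{\C}(D,B)$ over $D$, while the fiber of $\O^{(id_B)}_{\C}$ over $D$ is the component of $id_D$ in the fiber of $\O_{\C}$ over $D$ when $map_{\C}(D,B)$ is nonempty, and empty otherwise --- that is, the $(-1)$-truncation of $map_{\C}(D,B)$. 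These differ as soon as $B$ is not $(-1)$-truncated.

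What your argument does correctly establish --- and what is actually needed for the theorem that follows --- is the equivalence $\O^{(p)}_{\C} \simeq ((\C^{F(1)})^{(p)})_0$: the right fibration $\O^{(p)}_{\C}$ is the fiberwise quotient (realization) of the groupoid of equivalences of $\n(p)$, of which $(\C_{/\n(p)})_{hoequiv}$ is the object of morphisms and $\C_{/B}$ the object of objects. That suffices for the equivalence of (1) and (2) in the subsequent theorem, since groupoid objects in spaces are effective, and hence the unit map $\C_{/B} \to (\C_{/\n(p)})_{hoequiv}$ is an equivalence if and only if the quotient map $\C_{/B} \to \O^{(p)}_{\C}$ is. So either the lemma should be read as asserting this realized identification (the zeroth level of the completion, rather than the object of equivalences of $\n(p)$ itself), or it must carry the hypothesis that $p$ is univalent; as literally stated, no argument along your lines can establish it.
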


 \begin{theone}
  The following are equivalent.
  \begin{enumerate}
   \item $p$ is univalent.
   \item The map $\C_{/B} \to \O^{(p)}_{\C}$ is an equivalence.
   \item The map $E \to B$ is a final object in the CSS $\O^{(p)}_{\C}$.
   \item The Cartesian fibration $(\C^{F(1)})^{(p)}$ is representable.
   \item The map $(\C^{F(1)})^{(p,B)} \to \C^{F(1)}$ is $(-1)$-truncated.
  \end{enumerate}
 \end{theone}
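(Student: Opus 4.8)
The plan is to route all five conditions through the right fibration $\O^{(p)}_{\C}$ and the single map $s_0' = (-)^{*}p \colon \C_{/B} \to \O^{(p)}_{\C}$, using the preceding lemma $(\C_{/\n(p)})_{hoequiv} \simeq \O^{(p)}_{\C}$ together with the fact that $\C_{/\n(p)}$ is the Segal Cartesian fibration represented by $\n(p)$. First I would prove (1) $\Leftrightarrow$ (2). By Definition \ref{Def Univalent map}, $p$ is univalent iff $\n(p)$ is complete, i.e. iff $s_0 \colon \n(p)_0 \to \n(p)_{hoequiv}$ is an equivalence in $\C$. Since $\C_{/-}$ is a fully faithful embedding of simplicial objects into representable Reedy right fibrations, this holds iff $\C_{/\n(p)_0} \to \C_{/\n(p)_{hoequiv}}$ is an equivalence of right fibrations. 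Now $\n(p)_0 \simeq B$, and since $\n(p)_{hoequiv}$ is built by the same pullback that defines the object of equivalences of the Cartesian fibration $\C_{/\n(p)}$, we get $\C_{/\n(p)_{hoequiv}} \simeq (\C_{/\n(p)})_{hoequiv} \simeq \O^{(p)}_{\C}$ by the preceding lemma. Tracing $\mathrm{id}_B$ through these identifications --- the same bookkeeping used in this subsection to identify $d$ with $\Delta_B$ --- shows the resulting map is $s_0'$, giving (1) $\Leftrightarrow$ (2).

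Next I would handle (2) $\Leftrightarrow$ (3) and (2) $\Leftrightarrow$ (4). Since $\C_{/B}$ is the right fibration represented by $B$, maps $\C_{/B} \to \O^{(p)}_{\C}$ over $\C$ correspond by Yoneda to objects of $\O^{(p)}_{\C}$ lying over $B$, and $s_0' = (-)^{*}p$ sends the universal object $\mathrm{id}_B$ to $p$, so it is the map classified by $p$. A right fibration $\R \to \C$ is representable, via the object under a point $r$ over $c$, precisely when $r$ is a final object of $\R$; applied to $r = p$ this is (2) $\Leftrightarrow$ (3). For (4): $(\C^{F(1)})^{(p)}$ is always a Cartesian fibration, so by Theorem \ref{The Rep Segal Cart} (and Theorem \ref{The Segal Obj out of two obj} for the complete version) it is representable iff its levels $0$ and $1$ are representable right fibrations; level $1$ is representable by Lemma \ref{Lemma Arrow level one rep} and level $0$ is $\O^{(p)}_{\C}$, so (4) is equivalent to $\O^{(p)}_{\C}$ being representable, hence to $\O^{(p)}_{\C}$ having a final object. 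That (3) supplies a final object is clear; conversely, if $\O^{(p)}_{\C} \simeq \C_{/Q}$ then $s'$ and $s_0'$ become maps $\bar s \colon Q \to B$ and $\bar s_0 \colon B \to Q$ with $\bar s \bar s_0 \simeq \mathrm{id}$, and the (always valid) essential surjectivity of $s_0'$ forces $\bar s_0$ to have a right inverse as well, so $\bar s_0$, hence $s_0'$, is an equivalence: we recover (2), and moreover the final object must be $p$.

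Then (2) $\Leftrightarrow$ (5). In levels $\geq 1$ the map $(\C^{F(1)})^{(p,B)} \to \C^{F(1)}$ is a base change of the full-subcategory inclusion $\O^{(p)}_{\C} \hookrightarrow \O_{\C}$, hence automatically $(-1)$-truncated there; by left cancellation of $(-1)$-truncated maps, (5) is then equivalent to $s_0' \colon \C_{/B} \to \O^{(p)}_{\C}$ being $(-1)$-truncated. But over every object $D$ the map $s_0'$ is surjective on $\pi_0$ --- every pullback of $p$ with codomain $D$ is some $f^{*}p$ by the very definition of $\O^{(p)}_{\C}$ --- and a $(-1)$-truncated map of spaces that is surjective on $\pi_0$ is an equivalence, so $s_0'$ is $(-1)$-truncated iff it is an equivalence, i.e. iff (2).

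The step I expect to be the main obstacle is (2) $\Leftrightarrow$ (5): one has to make the map $(\C^{F(1)})^{(p,B)} \to \C^{F(1)}$, and in particular its level-$0$ component, completely explicit and then check that $(-1)$-truncatedness genuinely collapses to the level-$0$ statement about $s_0'$; verifying compatibility of the degeneracy $\sigma_0 = s_0 s_0'$ and of the faces $\sigma = s's$, $\tau = t't$ with the simplicial structure of $\C^{F(1)}$ is precisely where the earlier trace-through computations have to be reused, and it is tempting but illegitimate to replace $\O^{(p)}_{\C}$ by $\C_{/B}$ before (2) is known. The remaining steps are routine once the dictionary ``$\n(p)$ complete $\leftrightarrow$ $s_0'$ an equivalence'' is in place.
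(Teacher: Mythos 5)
Your proof is correct and follows essentially the same route as the paper: everything is funneled through condition (2) via the lemma $(\C_{/\n(p)})_{hoequiv} \simeq \O^{(p)}_{\C}$, with (3) and (4) handled by the final-object/representability dictionary for right fibrations and (5) by left cancellation of $(-1)$-truncated maps plus fiberwise $\pi_0$-surjectivity of $s_0'$. The only differences are that you are somewhat more careful than the paper in the $(4)\Rightarrow(2)$ direction (ruling out a representing object other than $B$) and in isolating the level-$0$ component for $(5)$, both of which are welcome elaborations rather than a different argument.
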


 \begin{proof}
  We go through the different cases:
  
  ${ ( 1 \Longleftrightarrow 2)}$
  $p$ is univalent if and only if $\n(p)_0=B$ is equivalent to $\n(p)_{hoequiv}$, which is equivalent to 
  $\C_{/B}$ being equivalent to $\C_{hoequiv}$. By the previous lemma this is the same as $\C_{/B}$ being equivalent to $\O^{(p)}_{\C}$.
  
  ${ (2 \Longleftrightarrow 3)}$ $\C_{/B}$ has a final object by definition. Thus $\O^{(p)}_{\C}$ has final object $E \to B$ if and only if 
  it is equivalent to $\C_{/B}$.
  
  ${ (2 \Longleftrightarrow 4)}$ The Cartesian fibration $(\C^{F(1)})^{(p)}$ is representable if and only if $\O^{(p)}_{\C}$ is representable.
  
  ${ (2 \Longrightarrow 5)}$ We have the chain of maps $(\C^{F(1)})^{(p,B)} \to  (\C^{F(1)})^{(p)} \to \C^{F(1)}$, 
  where the second map is $-1$-truncated.
  If the map $\C_{/B} \to \O^{(p)}_{\C}$ is an equivalence then the composition is $-1$-truncated. 
  \par 
  If the map $(\C^{F(1)})^{(p,B)} \to \C^{F(1)}$ is $(-1)$-truncated then in particular the map 
  $(\C^{F(1)})^{(p,B)} \to (\C^{F(1)})^{(p)}$ is $-1$-truncated, which also implies that the map $\C_{/B} \to \O^{(p)}_{\C}$ is $-1$-truncated.
  We will show this map is an equivalence. As it is a map of right fibrations it suffices to do so fiber-wise.
  Thus we have to show that for each object $map(D,B) \to (\C_{/D})^{(p)}$ is an equivalence.
  We already know it is $-1$-truncated, but we also know it is surjective on path-components. Thus it has to be an equivalence.
 \end{proof}

 \subsection{The Poset of Univalent Maps} \label{Subsec The Poset of Univalent Maps}
 In this subsection we want to discuss how univalent maps relate to each other.
 For this subsection we fix a pullback square
 \begin{center}
  \pbsq{E_2}{E_1}{B_2}{B_1}{f_E}{p_2}{p_1}{f_B}
 \end{center}
 in a locally Cartesian closed higher category $\C$.
 
 First of all the pullback induces an embedding of Cartesian fibrations 
 $$(\C^{F(1)})^{(p_2)} \to (\C^{F(1)})^{(p_1)}$$
 as every map that is a pullback of $p_2$ is also a pullback of $p_1$.
 \par 
 This map induces an embedding of Segal Cartesian fibrations 
 $$(\C^{F(1)})^{(p_2, B_2)} \to (\C^{F(1)})^{(p_1, B_1)}$$
 which by the Yoneda lemma gives us a map of Segal objects 
 $$\n(p_2) \to \n(p_1)$$
 In light of this map we have following theorem.
 
 \begin{theone} \label{The Uni iff Mono}
  Assume $p_1$ is univalent. Then $p_2$ is univalent if and only if $f_B$ is mono.
 \end{theone}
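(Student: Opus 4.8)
The plan is to reduce univalence of $p_i$ to the equivalent condition, established in the theorem listing characterisations of univalence (conditions (1) and (2) there), that the map $s_0' = (-)^*p_i \colon \C_{/B_i} \to \O^{(p_i)}_{\C}$ is an equivalence of right fibrations over $\C$, and then to analyse the commuting square that the given pullback square induces between these maps. Since all the objects involved are right fibrations over $\C$, it suffices to work fibrewise over an arbitrary object $D$, where every map in sight becomes a map of \emph{spaces}. Over $D$ the relevant square is

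\begin{center}
\begin{tikzcd}[row sep=0.4in, column sep=0.5in]
map_{\C}(D,B_2) \arrow[r, "g_D"] \arrow[d, "(f_B)_*"'] & (\O_{/D})^{(p_2)} \arrow[d, "b_D"] \\
map_{\C}(D,B_1) \arrow[r, "c_D"'] & (\O_{/D})^{(p_1)}
\end{tikzcd}
\end{center}

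where $g_D(f) = f^*p_2$, $c_D(f) = f^*p_1$, and $b_D$ is the inclusion. This square commutes because the given square exhibits $p_2$ as the pullback $f_B^*p_1$, so $(f_B f)^*p_1 \simeq f^*(f_B^*p_1) \simeq f^*p_2$.

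Next I would record three elementary facts. First, $g_D$ (and likewise $c_D$) is always surjective on path components: by construction every object of $\O^{(p_i)}_{\C}$, hence every point of $(\O_{/D})^{(p_i)}$, is a pullback of $p_i$, i.e.\ lies in the image of $(-)^*p_i$; this is exactly the observation already used in the proof of the characterisation theorem. Second, $b_D$ is a monomorphism of spaces: since $p_2$ is itself a pullback of $p_1$, every pullback of $p_2$ is a pullback of $p_1$ (composites of pullback squares are pullback squares), so $(\O_{/D})^{(p_2)}$ is precisely the union of those path components of $(\O_{/D})^{(p_1)}$ that arise from $p_2$. Third, $(f_B)_*\colon map_{\C}(D,B_2)\to map_{\C}(D,B_1)$ is a monomorphism of spaces for \emph{every} $D$ if and only if $f_B$ is $(-1)$-truncated in $\C$ (this is just the definition of $(-1)$-truncated maps tested against all $D$, using $map_{\C}(D, B_2\times_{B_1}B_2)\simeq map_{\C}(D,B_2)\times_{map_{\C}(D,B_1)}map_{\C}(D,B_2)$). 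Finally, the hypothesis that $p_1$ is univalent says exactly that $c_D$ is an equivalence for all $D$.

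The two implications are then formal manipulations with monomorphisms of spaces. If $p_2$ is univalent, $g_D$ is an equivalence, so $b_D\circ g_D$ is a monomorphism; but $b_D\circ g_D \simeq c_D\circ (f_B)_*$ with $c_D$ an equivalence, so $(f_B)_*$ is a monomorphism for every $D$, i.e.\ $f_B$ is mono. Conversely, if $f_B$ is mono then $c_D\circ(f_B)_* \simeq b_D\circ g_D$ is a monomorphism; since $b_D$ is a monomorphism, this forces $g_D$ to be a monomorphism as well; being also surjective on path components, $g_D$ is then an equivalence of spaces for every $D$, hence $s_0'\colon\C_{/B_2}\to\O^{(p_2)}_{\C}$ is an equivalence and $p_2$ is univalent.

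I do not expect a serious obstacle here: granted the characterisation of univalence via $\C_{/B}\to\O^{(p)}_{\C}$, the argument is bookkeeping, and the only points needing any care are the commutativity of the square (i.e.\ $p_2\simeq f_B^*p_1$) and the fact that $g_D$ is always essentially surjective, which is what upgrades a monomorphism to an equivalence. For completeness I note that one can also argue directly with the nerves: the fully faithful map $\n(p_2)\to\n(p_1)$ together with base change for the dependent product gives $\n(p_2)_{hoequiv}\simeq \n(p_1)_{hoequiv}\times_{B_1\times B_1}(B_2\times B_2)$; univalence of $p_1$ collapses the first factor to the diagonal $B_1\to B_1\times B_1$, so $\n(p_2)_{hoequiv}\simeq B_2\times_{B_1}B_2$, and $\n(p_2)$ is complete exactly when $B_2\to B_2\times_{B_1}B_2$ is an equivalence, i.e.\ exactly when $f_B$ is mono.
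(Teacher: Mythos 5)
Your proof is correct, but it routes through a different key input than the paper does. The paper never invokes the characterization ``$p$ univalent iff $\C_{/B}\to\O^{(p)}_{\C}$ is an equivalence''; instead it unwinds univalence of $p_2$ as the fiberwise completeness of the Segal spaces $(\C^{F(1)})^{(p_2,B_2)}_D$, observes that the pullback square makes each of these a fully faithful sub-Segal-space of the complete Segal space $(\C^{F(1)})^{(p_1,B_1)}_D$, and then proves the general lemma that for a fully faithful $F:\C_2\to\C_1$ with $\C_1$ complete, $\C_2$ is complete iff $(\C_2)_0\to(\C_1)_0$ is $(-1)$-truncated (via the square comparing $hoequiv_{\C_2}(x,y)\to hoequiv_{\C_1}(Fx,Fy)$ with the map of path spaces). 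You instead take the already-established equivalence criterion $\C_{/B_i}\to\O^{(p_i)}_{\C}$ and do a fiberwise diagram chase with monomorphisms of spaces, using that $g_D$ and $c_D$ are $\pi_0$-surjective, that $b_D$ is an inclusion of path components, and the left-cancellation of monos through a mono (which you correctly justify by having $b_D$ mono -- note that without that hypothesis the cancellation would fail). Your route is shorter and more elementary granted the characterization theorem, but it depends on it; the paper's route is self-contained at this point and extracts a reusable categorical fact about full sub-Segal-spaces of complete Segal spaces. Your closing sketch via $\n(p_2)_{hoequiv}\simeq B_2\times_{B_1}B_2$ is in fact the closest in spirit to the paper's argument, being a representable restatement of its hoequiv-versus-path-space square. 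Both arguments bottom out at the same place: $f_B$ is mono in $\C$ iff $(f_B)_*$ is a monomorphism of spaces for every context $D$.
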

 
 \begin{proof}
  The map of $p_2$ is univalent if and only if $\n(p_2)$ is a complete Segal object.
  This is equivalent to $(\C^{F(1)})^{(p_2, B_2)}$ being a Cartesian fibration, which is equivalent to being a fiberwise CSS.
  However, as we have a pullback square the induced map of CSS is always an embedding.
  Thus the proof reduces to proving the following statement:
  \par 
  {\it Let  $F:\C_2 \to \C_1$ be an embedding from a Segal space $\C_2$ to complete Segal space $\C_1$. Then 
  $\C_2$ is a complete Segal space if and only if the map of spaces $(\C_2)_0 \to (\C_1)_0$ is $(-1)$-truncated.}
  \par 
  We prove this statement in the following way. Let $x,y$ be two objects in $\C_2$. This gives us following commutative diagram
  \begin{center}
   \comsq{hoequiv_{\C_2}(x,y)}{hoequiv_{\C_1}(Fx,Fy)}{Path_{(\C_2)_0}(x,y)}{Path_{(\C_1)_0}(Fx,Fy)}{\simeq}{}{\simeq}{}
  \end{center}
  $\C_2$ is complete if and only if the right hand vertical map is an equivalence. This is equivalent to the map of path spaces 
  $Path_{(\C_2)_0}(x,y) \to Path_{(\C_1)_0}(Fx,Fy)$, being an equivalence. However, this is just the statement 
  that the map $(\C_2)_0 \to (\C_1)_0$ is a $(-1)$-truncated map of spaces.
  
 \end{proof}
 
 This theorem guides us towards our understanding of univalent maps.
 
 \begin{theone} \label{The Univalent Poset}
  The sub-category of $\O_{\C}$ generated by all univalent maps is a poset.
 \end{theone}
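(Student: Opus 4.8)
The plan is to establish something slightly stronger than the statement: for \emph{any} univalent map $p\colon E\to B$ and \emph{any} object $q$ of $\O_{\C}$, the mapping space $map_{\O_{\C}}(q,p)$ is $(-1)$-truncated. Granting this, the full subcategory of $\O_{\C}$ spanned by the univalent maps — and hence any subcategory it generates — has all of its mapping spaces $(-1)$-truncated, which is exactly the condition for a CSS to be equivalent to (the nerve of) a poset: automorphism spaces are then contractible, and whenever $map(p,p')$ and $map(p',p)$ are both inhabited, composing the two morphisms produces elements of the contractible spaces $map(p,p)$ and $map(p',p')$, forcing those composites to be identities and hence $p\simeq p'$.

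To prove the stronger claim, I would first unwind the definition of $\O_{\C}$: a morphism $q\to p$ in $\O_{\C}$ is a pullback square exhibiting $q$ as a pullback of $p$. So if $map_{\O_{\C}}(q,p)=\emptyset$ there is nothing to do, and otherwise the very existence of a morphism places $q$ in the full subcategory $\O_{\C}^{(p)}\hookrightarrow\O_{\C}$ of pullbacks of $p$, whence $map_{\O_{\C}}(q,p)\simeq map_{\O_{\C}^{(p)}}(q,p)$ by fullness. Now I would invoke the characterisation of univalence proved above: since $p$ is univalent, $p\colon E\to B$ is a final object of the CSS $\O_{\C}^{(p)}$ (equivalently, $\C_{/B}\to\O_{\C}^{(p)}$ is an equivalence carrying the terminal object $id_B$ of $\C_{/B}$ to $p$). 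A mapping space into a final object is contractible, so $map_{\O_{\C}^{(p)}}(q,p)$ is contractible. In either case $map_{\O_{\C}}(q,p)$ is $(-1)$-truncated; taking $q$ to be a second univalent map then yields the theorem.

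The only step requiring genuine care is the bookkeeping around $\O_{\C}$ and $\O_{\C}^{(p)}$: one must confirm that the mere existence of a morphism $q\to p$ forces $q$ to be (equivalent to) a pullback of $p$, hence an object of $\O_{\C}^{(p)}$, and that $\O_{\C}^{(p)}$ is genuinely full in $\O_{\C}$ so that mapping spaces can be computed inside it. Both are immediate from the definitions — objects of $\O_{\C}$ are morphisms of $\C$, its morphisms are pullback squares, and $\O_{\C}^{(p)}$ is the full subcategory on pullbacks of $p$ — so I do not anticipate any real obstacle beyond this verification. Conceptually, the computation shows that a univalent map is a ``$(-1)$-truncated object'' of $\O_{\C}$ in the appropriate sense, which is the reason the generated subcategory collapses to a poset.
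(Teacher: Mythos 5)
Your proposal is correct and follows essentially the same route as the paper: the paper's proof is precisely the observation that a univalent $p$ is final in $\O^{(p)}_{\C}$, so the space of maps between two univalent maps is either empty or contractible. The extra bookkeeping you supply (fullness of $\O^{(p)}_{\C}$ in $\O_{\C}$, and that a morphism $q\to p$ forces $q$ to be a pullback of $p$) is exactly the right justification for the step the paper leaves implicit.
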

 
 \begin{proof}
  This follows from the fact that any univalent map $p: E \to B$ is final in $\O^{(p)}_{\C}$ and so 
  the space of maps between two univalent maps is either empty or if not then has to be contractible.
 \end{proof}

 \begin{remone}
  Both of those Theorems were proven in the presentable case in \cite[Theorem 3.9, Corollary 3.10]{GK17}
 \end{remone}

 \begin{remone}
  If $\C$ is presentable then we can define bounded local classes of maps \cite[3.3]{GK17}.
  Intuitively, it is a subclass of the morphisms in $\C$ that are closed under base change, satisfies a certain locality condition 
  (sheaf condition) and has some cardinality bound. Those bounded local classes form a poset under inclusion.
  \cite[Theorem 3.9]{GK17} shows that in a presentable locally Cartesian closed quasi-category there is an equivalence between the poset of bounded
  local classes and the poset of univalent families. 
  \par 
  In an arbitrary locally Cartesian closed higher category we cannot define bounded local classes, but we can still define univalent maps.
  Thus in the non-presentable setting univalence takes the role of bounded local classes. In other words, we can use univalence to define
  bounded local classes.
 \end{remone}

 \subsection{Univalence and Elementary Toposes} \label{Subsec Univalence and Elementary Toposes}
 In classical category theory there is a class of categories known as {\it elementary toposes},
 which is used in categorical logic. Here we only give some basic definitions necessary to study univalent maps.
 For a detailed introduction see \cite{MM92}.
 
 \begin{remone}
  In this subsection we are completely focusing on classical categories ($1$-categories).
 \end{remone}

 \begin{defone}
  Let $\C$ be a (classical) category with finite limits. There is a functor 
  $$Sub(-): \C^{op} \to \set$$
  that takes each object $c$ to the set of equivalence classes of subobjects of $c$ (mono maps with target $c$), which we denote by $Sub(c)$.
  The functoriality follows from the fact that the pullback of a mono map is also mono.
 \end{defone}

 \begin{defone}
  Let $\C$ be a (classical) category with finite limits.  An object $\Omega$ is called a {\it subobject classifier}
  if it represents $Sub(-)$.
 \end{defone}

 \begin{remone}
  If $\Omega$ is a subobject classifier then we can deduce from the  equivalence $Hom(\Omega, \Omega) \cong Sub(\Omega)$ 
  the existence of a {\it universal mono} $u:1 \to \Omega$ such that for every mono map $A \to B$ there exists a
  pullback square 
  \begin{center}
   \pbsq{A}{1}{B}{\Omega}{}{}{u}{}
  \end{center}
  Here $1$ is the final object in $\C$. 
 \end{remone}

 \begin{defone}
  A category $\E$ is an elementary topos if it is locally Cartesian closed and has a subobject classifier $\Omega$.
 \end{defone}
 
 We have following results about univalent maps in an elementary topos.
 
 \begin{propone}
  The universal map $u: 1 \to \Omega$ is univalent.
 \end{propone}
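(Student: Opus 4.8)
The plan is to apply the characterization of univalence established above: a map $p : E \to B$ is univalent if and only if $p$ is a final object of the CSS $\O^{(p)}_{\C}$ (condition (3) of the theorem listing the equivalent forms of univalence). Since we are now working inside a classical category, with $\C = \n\E$, the CSS $\O^{(u)}_{\E}$ is simply the nerve of an ordinary category, so the task reduces to showing that $u : 1 \to \Omega$ is a terminal object of the ordinary category $\O^{(u)}_{\E}$.

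First I would identify this category explicitly. By construction its objects are the morphisms of $\E$ that arise as a pullback of $u$, and its morphisms are pullback squares between such maps. The defining property of a subobject classifier is that every monomorphism of $\E$ occurs as a pullback of $u$; conversely, monomorphisms are stable under pullback in any category with finite limits. Hence the objects of $\O^{(u)}_{\E}$ are precisely the monomorphisms of $\E$ --- and $u$ itself is one of them, any map out of the terminal object being a monomorphism.

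Next I would check finality. Given a monomorphism $m : A \to B$, a morphism $m \to u$ in $\O^{(u)}_{\E}$ is exactly a pullback square
\begin{center}
 \pbsq{A}{1}{B}{\Omega}{}{}{u}{}
\end{center}
Here the top edge $A \to 1$ is forced, being the unique map to the terminal object, and the bottom edge must, for the square to be a pullback of this shape, be a classifying map of the subobject $m$. The universal property of $\Omega$ says precisely that such a classifying map exists and is unique, so there is exactly one morphism $m \to u$. Thus $u$ is terminal in $\O^{(u)}_{\E}$, and therefore univalent.

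The only step that genuinely requires care is the first identification --- that the full subcategory of $\O_{\E}$ spanned by the pullbacks of $u$ is the category of monomorphisms --- together with the observation that a morphism into $u$ in $\O_{\E}$ carries exactly the datum of a classifying square. Once these are in place, the conclusion is an immediate restatement of the universal property of the subobject classifier, so I do not anticipate any serious difficulty.
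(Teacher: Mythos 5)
Your proof is correct, but it takes a genuinely different route from the paper's. You invoke condition (3) of the equivalence theorem for univalence --- that $p$ is univalent iff $E\to B$ is final in $\O^{(p)}_{\C}$ --- identify $\O^{(u)}_{\E}$ with the category of monomorphisms of $\E$ and pullback squares (every mono is a pullback of $u$ by the classifier property; every pullback of $u$ is a mono since $u$ is mono and monos are pullback-stable), and then observe that terminality of $u$ there is a literal restatement of the universal property of $\Omega$: the bottom edge of a square $m\to u$ is forced to be the unique classifying map $\chi_m$ and the top edge is forced by terminality of $1$. The paper instead works directly with Definition \ref{Def Univalent map}: it computes the Segal object $\n(u)$ explicitly, identifying $\n(u)_0\simeq\Omega$ and $\n(u)_1$ with the internal Heyting/order structure on $\Omega$ (citing \cite{MM92}), so that $\n(u)$ is an internal poset, and then concludes completeness from the fact that a poset has no non-identity isomorphisms between its objects. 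Your argument is more self-contained --- it needs only the subobject-classifier axiom and the equivalence theorem already proved in the paper, and it sidesteps the external identification of the internal Heyting object --- while the paper's proof buys a concrete description of $\n(u)$ as the internal poset of truth values, which is conceptually illuminating and reused implicitly in the subsequent proposition about mono univalent maps. The only point in your write-up deserving a remark is that the universal property gives uniqueness of the classifying map of the \emph{subobject} $[m]$, so one should note that the square with $m$ itself (rather than an isomorphic representative) in the left column is still a pullback; this is immediate since limit cones are closed under isomorphism of cones, so it is not a gap.
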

 
 \begin{proof}
  The category object $\n(u)$ has objects $\Omega$ and the morphisms come from the Heyting algebra structure on $\Omega$. 
  This is exactly the internal Heyting object as described in \cite[Theorem 1, Page 201]{MM92}, which in particular is a poset.
  But a poset never has non-trivial automorphism and thus is a complete Segal object in $\E$.
 \end{proof}

 \begin{propone}
  A mono map $v: E \to B$ is univalent if and only if $B$ is a subobject of $\Omega$.
 \end{propone}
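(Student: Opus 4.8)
The plan is to deduce this from the two facts just established: that the universal monomorphism $u\colon 1 \to \Omega$ is univalent (the proposition immediately preceding), and that along a fixed pullback square univalence of the base map is inherited exactly when the comparison map of the bases is a monomorphism (Theorem~\ref{The Uni iff Mono}).

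First I would invoke the subobject classifier. Since $v\colon E \to B$ is a monomorphism in $\E$, there is a \emph{unique} classifying map $\chi_v\colon B \to \Omega$ fitting into a pullback square
\begin{center}
\begin{tikzcd}[row sep=0.4in, column sep=0.5in]
E \arrow[r] \arrow[d, "v"'] \arrow[dr, phantom, "\ulcorner", very near start] & 1 \arrow[d, "u"] \\
B \arrow[r, "\chi_v"'] & \Omega
\end{tikzcd}
\end{center}
so that $v$ is the pullback $\chi_v^{*}u$ of the universal mono along $\chi_v$. This square is precisely the shape of the pullback square fixed in Theorem~\ref{The Uni iff Mono}, with $p_1 = u$, $p_2 = v$ and $f_B = \chi_v$; in particular the induced comparison of Segal objects is the map $\n(v) \to \n(u)$ considered there.

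Next I would apply Theorem~\ref{The Uni iff Mono}. Its hypothesis, that $p_1 = u$ is univalent, is exactly the content of the preceding proposition, so the theorem yields that $v = p_2$ is univalent if and only if $f_B = \chi_v$ is a monomorphism. Finally I would identify the two conditions: a monomorphism $\chi_v\colon B \to \Omega$ is nothing but an exhibition of $B$ as a subobject of $\Omega$, and conversely, by uniqueness of classifying maps, any monomorphism $B \hookrightarrow \Omega$ is the classifying map of the monomorphism into $B$ that it induces by pulling back $u$. This matches both sides and completes the argument.

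I do not expect a substantial obstacle, since both ingredients are already in hand. The only points deserving care are bookkeeping ones: checking that the canonical square coming from the subobject classifier genuinely matches the fixed pullback square underlying Theorem~\ref{The Uni iff Mono} (so that the map of Segal objects there really is $\n(v) \to \n(u)$, and so that the classical topos $\E$, viewed as a locally Cartesian closed CSS via its nerve, falls within the scope of that theorem), and reading ``$B$ is a subobject of $\Omega$'' in the intended sense, namely that the classifying map $\chi_v$ itself is monic. An alternative, more computational route would be to compute $\n(v)$ directly: for $v$ monic the hom-objects $\n(v)_1 = \M$ are subterminal over $B\times B$, so $\n(v)$ is a preorder object, and its completeness amounts to antisymmetry, which is again the statement that $\chi_v$ is mono; but the argument via Theorem~\ref{The Uni iff Mono} is cleaner and is the one I would present.
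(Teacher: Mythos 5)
Your proposal is correct and follows essentially the same route as the paper: classify the mono $v$ by a pullback square along the universal mono $u\colon 1 \to \Omega$, invoke the univalence of $u$ from the preceding proposition, and apply Theorem \ref{The Uni iff Mono} to reduce univalence of $v$ to $\chi_v$ being mono. The paper's proof is just a terser version of this same argument.
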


 \begin{proof}
  As the map $v: E \to B$ is mono there is a pullback square of the form
  \begin{center}
   \pbsq{E}{1}{B}{\Omega}{}{v}{u}{i}
  \end{center}
  By Theorem \ref{The Uni iff Mono} $v$ is univalent if and only if $i$ is mono.
 \end{proof}
  
 \begin{exone}
  The category $\set$ is an elementary topos where $\Omega = \{ 0,1\}$. Thus the poset of mono univalent maps has the $4$ objects
  
  \begin{center}
  \begin{tabular}{ccccccc}
   $\emptyset$ & & $\emptyset$ & & $\{ 1 \}$ & & $\{ 1 \}$ \\
   $\downarrow$ & , & $\downarrow$ & , & $\downarrow$ & , & $\downarrow$ \\
   $\emptyset$ & & $\{ 1 \}$ & & $\{ 1 \}$ & & $\{ 0, 1 \}$
  \end{tabular}
  \end{center}
 \end{exone}

 The classification above actually recovers all univalent maps in $\set$.
 
 \begin{lemone}
  If a map in $\set$ is univalent then it has to be mono.
 \end{lemone}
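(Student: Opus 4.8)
The plan is to make the category object $\n(p)$ completely explicit and then read the claim off the completeness criterion for category objects in $\set$. Since here $\C = \n\set$ is the classification diagram of $\set$, a simplicial object in $\C$ is just a simplicial set, and $\n(p)$ is an honest category object in $\set$. First I would recall from the remark following the representability of $(\C^{F(1)})^{(p,B)}$ that $\n(p)_0 \simeq B$ and $\n(p)_1 \simeq \M$, and I would specialise the fibrewise computation in the proof of Lemma~\ref{Lemma Arrow level one rep} to $\C = \set$ and $D = *$: the fibre of $\M \to B \times B$ over a pair $(b_1,b_2)$ is $\set(E_{b_1},E_{b_2})$, the set of functions between the fibres $E_{b_i} := p^{-1}(b_i)$. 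Together with $\n(p)_n \simeq \M \times_B \cdots \times_B \M$ and the fact that, by unwinding the construction of $(\C^{F(1)})^{(p,B)}$ as a sub-fibration of the arrow category, composition in $\n(p)$ is induced by composition of functions, this identifies $\n(p)$ with the category whose objects are the elements of $B$ and with $\mathrm{Hom}_{\n(p)}(b_1,b_2) = \set(E_{b_1},E_{b_2})$.

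Next I would identify the isomorphisms in $\n(p)$: a morphism $b_1 \to b_2$ is invertible exactly when the corresponding function $E_{b_1} \to E_{b_2}$ is a bijection, so in particular $\mathrm{Aut}_{\n(p)}(b) = \mathrm{Sym}(E_b)$ for each $b \in B$. By the discussion of classical categories in Section~\ref{Sec Examples of Complete Segal Objects}, the Segal object $\n(p)$ is complete precisely when this category has no non-trivial automorphisms. Assuming $p$ univalent, $\n(p)$ is complete by Definition~\ref{Def Univalent map}, hence $\mathrm{Sym}(E_b)$ is trivial for every $b$; since the symmetric group of a set is trivial if and only if the set has at most one element, every fibre of $p$ has at most one point. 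Thus $p$ is injective, i.e. a monomorphism in $\set$, which together with the preceding proposition pins down all univalent maps in $\set$.

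The two inputs that are not purely formal are the fibrewise description of $\M$ — which is nothing more than the internal-hom and adjunction bookkeeping already carried out in Lemma~\ref{Lemma Arrow level one rep}, now read off over a point of $\set$ — and the elementary group-theoretic fact that $\mathrm{Sym}(S)$ is trivial iff $|S| \le 1$; neither presents a real difficulty. The step that most deserves care is the dictionary between ``complete Segal object in $\n\set$'' and ``category object in $\set$ with no non-trivial automorphisms'' (and, relatedly, that the composition in $\n(p)$ is the expected one), but this is exactly what is recorded in Section~\ref{Sec Examples of Complete Segal Objects}, so once $\n(p)$ has been made explicit as a category object the argument is immediate.
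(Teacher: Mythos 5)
Your argument is correct, but it reaches the key fact --- that every fiber $E_b$ has trivial automorphism group --- by a different route than the paper. The paper localizes first: since each point $b: * \to B$ is mono, Theorem~\ref{The Uni iff Mono} shows that the pullback $E_b \to *$ is again univalent, and Example~\ref{Ex Uni over point} (which is essentially your fibrewise computation of $\M$, carried out over a point) then says that the self-equivalences of $E_b$ are trivial. You instead describe the whole category object $\n(p)$ globally --- objects $B$, morphism sets $\set(E_{b_1},E_{b_2})$ --- and read the same conclusion off the completeness criterion for category objects in $\set$ from Section~\ref{Sec Examples of Complete Segal Objects}. The paper's route has the advantage of only invoking previously proved results as black boxes and never having to identify the composition law of $\n(p)$; your route has the advantage of making $\n(p)$ completely explicit in $\set$, which also recovers the paper's examples for $id_B$ and $F \to *$ in one stroke. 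The step you rightly flag --- that composition in $\n(p)$ is composition of functions between fibers --- is real but routine, and in any case your argument only uses the automorphism part of completeness (the full condition, that \emph{every} isomorphism in $\n(p)$ is an identity, is strictly stronger, but the weaker consequence already gives $|E_b| \le 1$ for all $b$, hence that $p$ is mono). Both proofs finish identically with the observation that $\mathrm{Sym}(S)$ is trivial iff $|S| \le 1$.
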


 \begin{proof}
  If the map $p: E \to B$ is univalent in $\set$ then for every element $b \in B$ then fiber map $E_b \to \{ b \}$ also has to be 
  univalent as it is the pullback along the mono map $b: * \to B$. However, by Example \ref{Ex Uni over point} this only happens if
  $E_b$ has no non-trivial automorphisms. In the category of sets this only holds if $E_b$ is either empty or has one object.
  We just showed that the fiber over each point $b$ is either empty or has one point and so $E \to B$ is an injection of sets.
 \end{proof}

 \begin{exone}
  Notice this does not generalize to other elementary toposes. Let $G = S_3$ the group of permutation of $3$ elements.
  The category of $G$-sets is an elementary topos as it is a category of presheaves.
  Let $S = \{ 1,2,3 \}$, which comes with an obvious $G$ action. 
  Then $S$ has no nontrivial automorphisms. Indeed, if $\sigma: S \to S$ is an automorphism
  Then for every element $\tau$ in $G$ we need to have $\sigma \tau = \tau \sigma$ in order to satisfy the equivariance condition.
  However, this is only satisfied by the identity as $S_3$ has a trivial center.
  \par 
  Thus $S$ has a no non-trivial automorphism in the category of $G$-sets. By Example \ref{Ex Uni over point} we deduce that the map 
  $S \to *$ is univalent. However, this map is not mono as a mono map in the category of presheaves is an injection of the underlying sets.
 \end{exone}

 \begin{remone}
  What we observed in this subsection is that an elementary topos is very well suited for the study of mono univalent maps, 
  but cannot understand univalent maps that are not mono. The failure stems from the fact that all objects in a category are $0$-truncated
  or, in other words, we only have $hom$-sets rather than mapping spaces and thus there is no hope of every classifying 
  non mono univalent maps. 
 \end{remone}
 
 The issue described here should motivate us to develop a higher category 
 that is able to classify all univalent maps.
 Following the analogy above such a higher category should be called an {\it elementary higher topos}.

\end{document}